\renewcommand{\algocfautorefname}{Algorithm}
\definecolor{fondo}{rgb}{0.898,0.996,0.898}
\title{Quatroids and rational plane cubics}
\author[T. Brysiewicz]{Taylor Brysiewicz}
\address[T. Brysiewicz]{Department of Mathematics, University of Western Ontario, London, Canada (ORCID: 0000-0003-4272-5934)}
\email{tbrysiew@uwo.ca}
\author[F. Gesmundo]{Fulvio Gesmundo}
\address[F. Gesmundo]{Saarland Informatics Campus, Universität des Saarlandes, Saarbr\"ucken, Germany (ORCID: 0000-0001-6402-021X)}
\email{gesmundo@cs.uni-saarland.de}
\author[A. Steiner]{Avi Steiner}
\address[A. Steiner]{Fakultät für Mathematik, Technische Universität Chemnitz, Chemnitz, Germany (ORCID: 0000-0003-2095-9203)}
\email{avi.steiner@gmail.com}
\newcommand{\alert}[1]{{\color{red}#1}}
\newcommand{\mydef}[1]{{\color{blue}#1}}
\newcommand{\mycomment}[1]{}
\newcommand{\bfp}{\mathbf{p}}
\newcommand{\TC}{\textrm{TC}}
\newcommand{\T}{\textrm{T}}
\newcommand{\bbC}{\mathbb{C}}
\newcommand{\bbP}{\mathbb{P}}
\newcommand{\bbR}{\mathbb{R}}
\newcommand{\bbQ}{\mathbb{Q}}
\newcommand{\calP}{\mathcal{P}}
\newcommand{\calD}{\mathcal{D}}
\newcommand{\calC}{\mathcal{C}}
\newcommand{\calI}{\mathcal{I}}
\newcommand{\calJ}{\mathcal{J}}
\newcommand{\calM}{\mathcal{M}}
\newcommand{\calQ}{\mathcal{Q}}
\newcommand{\calS}{\mathcal{S}}
\newcommand{\calX}{\mathcal{X}}
\newcommand{\calZ}{\mathcal{Z}}
\newcommand{\mult}{\mathrm{mult}}
\newcommand{\imult}{\mathrm{imult}}
\newcommand{\frakS}{\mathfrak{S}}
\newcommand{\frakB}{\mathfrak{B}}
\newcommand{\frakQ}{\mathfrak{Q}}
\renewcommand{\bar}[1]{\overline{#1}}
\newcommand{\vvirg}{, \ldots ,}
\newcommand{\p}{\textbf{p}}
\theoremstyle{definition}
\newtheorem{theorem}{Theorem}[section]
\newtheorem{definition}[theorem]{Definition}
\newtheorem{lemma}[theorem]{Lemma} 
\newtheorem{corollary}[theorem]{Corollary}
\newenvironment{example}
  {\pushQED{\qed}\examplex}
  {\popQED\endexamplex}
\newenvironment{remark}
  {\pushQED{\qed}\remarkx}
  {\popQED\endremarkx}
\newtheorem{proposition}[theorem]{Proposition}
\DeclareMathOperator{\disc}{disc}
\newcommand{\Gr}{\mathrm{Gr}}
\newcommand{\PGL}{\mathrm{PGL}}
\subjclass[2020]{(primary) 14N10, (secondary) 14E08, 55R80, 14H50, 05B35, 14Q05}
\keywords{quatroid, rational cubic, matroid, stratification}
\begin{document}
\begin{abstract}
It is a classical result that there are $12$ (irreducible) rational cubic curves through $8$ generic points in $\mathbb{P}_{\mathbb{C}}^2$, but little is known about the non-generic cases. The space of $8$-point configurations is partitioned into strata depending on combinatorial objects we call quatroids, a higher-order version of representable matroids. We compute all $779777$ quatroids on eight distinct points in the plane, which produces a full description of the stratification. For each stratum, we generate several invariants, including the number of rational cubics through a generic configuration. As a byproduct of our investigation, we obtain a collection of results regarding the base loci of pencils of cubics and positive certificates for non-rationality.  
\end{abstract}

\maketitle

\section{Introduction}

\noindent In this article, we address the following planar interpolation problem:

\vspace{-0.2in}
\begin{center}
\begin{equation}\label{eq:Problem}
\textit{How many rational cubic curves pass through eight distinct points in } \mathbb{P}_{\mathbb{C}}^2 \textit{?} \tag{Problem 1}
\end{equation}
\end{center}
Famously, the answer to this enumerative problem is $12$ whenever the eight points are in \emph{generic} position (see \autoref{fig:TwelveCubics}). A modern proof of this classical result follows by an evaluation of Kontsevich's Formula \cite[Claim 5.2.1]{Kontsevich1994}.  We consider the \emph{non-generic} cases of\autoref{eq:Problem}.

\begin{figure}[!htpb]
\includegraphics[scale=0.8]{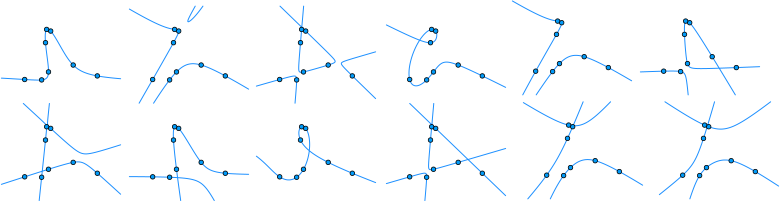}
\caption{Twelve rational cubics through eight generic points in the plane. Any appearances of reducibility or points coinciding with nodes are visual illusions.}
\label{fig:TwelveCubics}
\end{figure}

For almost all enumerative problems of interest, the count is  known only for generic parameters.
Determining the conditions of non-genericity for an enumerative problem is a hefty task; the challenge of describing the solution set over non-generic parameters is more difficult still.  
Such a full analysis has been achieved for few enumerative problems (e.g.\ \cite{Megyesi2003}). We advance the understanding of non-generic instances of\autoref{eq:Problem} through combinatorial objects we call \emph{quatroids}.

The parameter space of\autoref{eq:Problem} is the space $\mydef{\mathcal P} \subseteq (\mathbb{P}_{\mathbb{C}}^2)^8$ of configurations of eight \emph{distinct} points. We partition $\mathcal P$ into $779777$ \emph{quatroid strata}. Each stratum is a locally closed subset $\mydef{\mathcal S_{\mathcal Q}}\subset \mathcal P$ indexed by a pair $\mydef{\mathcal Q}=(\mydef{\mathcal I},\mydef{\mathcal J})$ of triples $\mathcal I$ and sextuples $\mathcal J$ of $\{1,2,\ldots,8\}$. Those pairs $\mathcal Q$ for which $\mathcal S_{\mathcal Q}$ is nonempty are called (representable) \mydef{quatroids}. A configuration $\mydef{\p}=(p_1,\ldots,p_8)$ is in $\mathcal S_{\mathcal Q}$ if the triples in $\calI$ index the triples of points in $\bfp$ lying on lines, and the sextuples in $\calJ$ do so for sextuples of points lying on (irreducible) conics. A similar construction was explored in \cite{Fie:PencilsCubicsEightPoints} in the context of M-curves and singular cubic curves interpolating convex configurations of points in $\bbP_{\mathbb{R}}^2$.

 The $779777$ quatroids appear in $125$ orbits under the natural action of the symmetric group $\mathfrak S_8$. We compute a representative quatroid ${\mathcal Q}$ of each orbit, along with several of its invariants (see \autoref{fig:Q10}). Included in this list is the number $\mydef{d_{\mathcal Q}}$ of rational cubics through a generic configuration in $\mathcal S_{\mathcal Q}$. 
\begin{figure}[!htpb]
\begin{tabular}{cccc}
\includegraphics[align=c,scale=0.6]{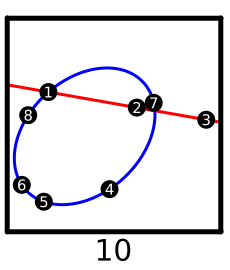} &&& \begin{tabular}{ll}Quatroid $\#$: 10 &
Orbit size: $168$ \\ 
Lines: {\color{red}{$\{123\}$}} &
Conics: {\color{blue}{$\{145678\}$}} \\
Reducibles: $\varnothing^1\mydef{\varnothing} $ &
$d_\mathcal Q$: $9$ \\
& \\
  $\mathbb{Q}$-Representative: & ${\tiny{\begin{bmatrix} 24 & 2 & -2 & 0 & 0 & 24 & -1 & 1 \\ 0 & 4 & 4 & 24 & 0 & 24 & -4 & 3 \\ 0 & -3 & -3 & 0 & 24 & 24 & 2 & -3 \end{bmatrix}}}$ \end{tabular}
\end{tabular}
\caption{An example of the data contained in the \nameref{appendix} and auxiliary files: A labeled illustration (left) of eight points realizing the tenth quatroid orbit $\mathcal Q_{10}=(\{123\},\{145678\})$. The size of the orbit of $\mathfrak S_8$ acting on $\mathcal Q_{10}$ is $168$. There is one reducible cubic through a generic configuration of $\mathcal Q_{10}$: the union of a conic and a secant line, indicated by the symbol $\varnothing^1$. The additional symbol $\mydef{\varnothing}$ indicates that this conic+secant appears with multiplicity one higher than expected. A rational representative is given.}
\label{fig:Q10}
\end{figure}

The main results about the stratification are summarized in the data contained in the \nameref{appendix}: 
\begin{itemize}
\item \autoref{fig:Quatroids1} and \autoref{fig:Quatroids2} illustrate each quatroid orbit via a real representative if available.
\item \autoref{tab:bigtable1} and \autoref{tab:bigtable2} enumerate each quatroid orbit. Included in these tables are their orbit sizes, combinatorial descriptions, descriptions of the  reducible cubics through generic representatives, and the counts of rational cubics through a generic representatives.
\item \autoref{tab:software} lists and describes the auxiliary files used to obtain the results in the paper.
\end{itemize} 
The code and auxiliary files may be found at
\begin{center}
\href{https://mathrepo.mis.mpg.de/QuatroidsAndRationalPlaneCubics}{https://mathrepo.mis.mpg.de/QuatroidsAndRationalPlaneCubics}
\end{center}

\autoref{fig:Quatroids1}/
\autoref{fig:Quatroids2} and \autoref{tab:bigtable1}/\autoref{tab:bigtable2} contain $126$ entries, rather than $125$. These correspond to the  $126$ orbits of \emph{candidate quatroids}  computed in \autoref{sec:Quatroids}. The $63^{\text{rd}}$ entry in each describes a collection of linear and quadratic dependencies which cannot be realized by any configuration of eight distinct points in $\mathbb{P}_{\mathbb{C}}^2$.

An enumerative problem such as\autoref{eq:Problem} can be viewed as a \emph{branched cover}, in the sense of \cite[Section 2.2]{NumericalNonlinearAlgebra}. Specifically, let $\mydef{\mathbb{P}S^3\mathbb{C}^3}$ be the projectivization of the space of homogeneous ternary cubics and consider the following two subsets: 
\begin{align*}
\mydef{\mathcal C} &= \{C \in \mathbb{P}S^3\mathbb{C}^3 \mid C \text{ is rational and irreducible}\}, \\
\mydef{\mathcal D} &= \{C \in \mathbb{P}S^3\mathbb{C}^3 \mid C \text{ is singular}\}.
\end{align*}
It is a classical fact that $\mathcal C$ is an open subvariety of the degree $12$ irreducible hypersurface $\mathcal D\subseteq \mathbb{P}S^3\mathbb{C}^3$  called the \mydef{discriminant} (see, e.g., \cite[Example I.4.15]{GKZ:DiscResMultDet}). 

\autoref{eq:Problem} pertains to rational cubics, but it is useful to consider the analogous problem for singular cubics. Hence, we consider incidence correspondences
\begin{align*}
\mydef{\mathcal X} &= \{(\p,C) \in \mathcal P \times \mathcal C \mid C(p_i) = 0, i=1,\ldots,8\}, \\
\mydef{\overline{ \mathcal X}} &= \{(\p,C) \in \mathcal P \times \mathcal D \mid C(p_i) = 0, i=1,\ldots,8\}.
\end{align*}
The natural projection maps $\pi: \calX \to \calP$ and $\bar{\pi}: \bar{\calX} \to \calP$ define two branched covers over $\calP$; clearly $\bar{\calX}$ is the closure of $\calX$ in $\calP \times \bbP S^3 \bbC^3$ and $\pi$ is the composition of the open inclusion map $\mathcal C \hookrightarrow \mathcal D$ with $\bar{\pi}$. With this setup, solving an instance of either enumerative problem over a configuration $\p \in \mathcal P$ is the same as computing the fibers $\pi^{-1}(\p)$ or $\overline{\pi}^{-1}(\p)$ of the respective branched cover.

For fixed $\p \in \mathcal P$, the equations $C(p_i)=0$ define a linear space $\mydef{L_{\p}} \subset \mathbb{P}S^3\mathbb{C}^3$ on the space of ternary cubics; explicitly, $L_\bfp$ is the projectivization of the homogeneous component of degree $3$ of the ideal of the points $\bfp$. If the conditions $C(p_i) = 0$ are linearly independent, then the dimension of $L_{\p}$ is $ \dim (\bbP S^3 \bbC^{10}) - 8 = 1$: in this case $L_\bfp$ is a pencil. Hence, singular cubics through $\p \in \mathcal P$ correspond to points in $L_{\p} \cap \mathcal D$ and the analysis of $\overline{\pi}$ is equivalent to the analysis of how pencils of cubics in $\mathbb{P}S^3\mathbb{C}^3$ meet the discriminant hypersurface $\mathcal D$.

In \autoref{sec:AGandCubics} we outline the relevant algebro-geometric geometric facts underlying our analysis of the branched covers $\pi$ and $\overline{\pi}$. We focus on the geometry of cubics. We characterize the point configurations $\p \in \mathcal P$ for which there are infinitely many singular cubics through $\p$ (\autoref{prop:FibreIsFinite}) as well as those configurations for which there are fewer than $12$ rational cubics through $\p$ (\autoref{thm:Characterization}). 

\autoref{thm:Characterization} forms the main idea behind the stratification of $\mathcal P$. It states that there are fewer than $12$ rational cubics through $\p \in \mathcal P$ if and only if $\p$ represents a non-uniform quatroid.  In  \autoref{sec:Quatroids}, we define (representable) quatroids (\autoref{def:Quatroid}) of eight distinct points in $\mathbb{P}_\mathbb{C}^2$ and discuss their relationship to matroids. We compute the set of \emph{candidate quatroids} $\mydef{\mathfrak Q}$, a superset of all quatroids, using \autoref{algo:allConicalExtensions}. We achieve this by making use of two necessary criteria for $\mathcal Q$ to be a quatroid: the underlying matroid is representable and $\mathcal Q$ satisfies \emph{B\'ezout's weak criteria} (\autoref{def:WeakCriteria}). For each candidate quatroid $\mathcal Q$, we determine if $\mathcal Q$ is representable over $\mathbb{Q}$, $\mathbb{R}$, or $\mathbb{C}$. We give explicit representations in the auxiliary file \texttt{RationalRepresentatives.txt}. We show that candidate $\mathcal Q_{63}$ is not representable over $\mathbb{C}$, and as a consequence, we obtain the true list of $125$ orbits of quatroids.

In \autoref{sec:IrreducibilityOfStrata} we show that all quatroids have irreducible realization spaces except for $\mathcal Q_{41}$, which has two irreducible components. This result justifies our use of the word \emph{generic} when considering points on these strata. 

Since the number $\mydef{d_{\p}}$ of rational cubics through a configuration $\p$ is upper semicontinuous, counting the rational cubics interpolating any representative configuration in $\mathcal S_{\mathcal Q}$ provides a lower bound on the number $d_{\mathcal Q}$ of rational cubics through a generic point of $\mathcal S_{\mathcal Q}$. This is done in  \autoref{sec:lowerbounds}; specifically, we compute these bounds in  \autoref{thm:lowerbounds}  using our rational representatives.

In \autoref{sec:expectedintersections} we turn toward showing that our lower bounds  on each $d_{\mathcal Q}$ are tight. We compute upper bounds for each $d_{\mathcal Q}$ by bounding the number of reducible cubics through any configuration $\p \in \mathcal S_{\mathcal Q}$, using the multiplicity of each reducible cubic as a point on $\mathcal D$.  Our lower bounds agree with our upper bounds in all but $24$ cases (see  \autoref{thm:upperboundsnottight}). In these remaining $24$ cases, the bound is off by one. We account for this discrepency by showing that configurations which represent any of these $24$ quatroid orbits must correspond to lines $L_{\p} \subset \mathbb{P}S^3\mathbb{C}^3$ which are tangent to a branch of $\mathcal D$ through a reducible cubic. Doing so, we achieve our main result \autoref{thm:MainTheorem} of determining $d_{\mathcal Q}$.  

 In \autoref{sec:conclusion} we collect several immediate consequences of our computations and outline challenges to refining our stratification. We explain how quatroids offer positive certificates for the non-rationality of cubics. In this section, we also compute the number of rational quartic curves through two special configurations using numerical algebraic geometry. Finally, we discuss examples which show that a complete stratification may be within reach, which would fully answer\autoref{eq:Problem}.

\begin{remark}
The computations supporting  several of our main theorems were first performed numerically using \texttt{HomotopyContinuation.jl} \cite{BreTim:HomotopyContinuation} in \texttt{julia} \cite{julia}. This includes 
\begin{itemize}
\item The computation of generic points on each (nonempty) quatroid stratum. 
\item The computation of \textbf{real} points on each stratum other than $\mathcal S_{\mathcal Q_{41}}$ and $\mathcal S_{\mathcal Q_{63}}$.
\item The computation of $d_{\mathcal Q}$ for each candidate quatroid $\mathcal Q$
\end{itemize}
For the sake of accessibility, brevity, and clarity, we decided to list explicit $\mathbb{Q}$-representatives instead of relying on subtle claims which depend upon numerical certification methods. Ultimately, despite its important role in developing our intuition about the present problem, no proof in this manuscript comes from a numerical computation. We showcase the power of numerical methods in \autoref{sec:conclusion} when discussing the possibility of extending our results to quartics.
\end{remark}

\section*{Acknowledgements}
We are grateful for the helpful conversations with Lukas K\"uhne during the early stages of this project.
The first author is partially supported by an NSERC Discovery Grant (Canada). The third author is partially supported by DFG Emmy-Noether-Fellowship RE 3567/1-1.

\section{Background on Algebraic Geometry and Cubics}\label{sec:AGandCubics}
In this section, we collect some standard facts about the geometry of points in the plane and cubic curves through them. Let $\p \in \mathcal P $ be a configuration of eight \emph{distinct} points in $ \mathbb{P}_\mathbb{C}^2$ and $L_{\p} \subseteq \bbP S^3 \bbC^3$ be the linear space of cubics vanishing at $\p$. Write \mydef{$Z(L_{\p})$} for the scheme in $\mathbb{P}_{\mathbb{C}}^2$ defined by the ideal generated by $L_{\p}$, called the \mydef{base locus} of $L_{\p}$. A $1$-dimensional subspace of $\mathbb{P}S^3\mathbb{C}^3$ is called a \mydef{pencil} of cubics. Plane curves of degree one, two, and three are called \mydef{lines}, \mydef{quadrics}, and \mydef{cubics}, respectively. An irreducible quadric is a \mydef{conic} and a \mydef{rational} cubic is a singular {irreducible} cubic.

A classical consequence of B\'ezout's Theorem \cite[Section 5]{Fulton2008} guarantees that in the cases of interest for\autoref{eq:Problem} the linear space $L_\bfp$ is $1$-dimensional. We include a proof for completeness.

\begin{proposition}
\label{prop:FibreIsFinite} Let $\p \in \mathcal P$. The following are equivalent.
\begin{enumerate}
\item The set $\overline{\pi}^{-1}(\p) = L_{\p} \cap \mathcal D$ is finite,
\item There is an irreducible cubic through $\p$,
\item No four points of $\p$ are on a line and no seven points of $\p$ are on a conic,
\item The residue of the scheme $Z(L_\bfp)$ with respect to $\bfp$ is a reduced point $p_9$, called the \mydef{Cayley--Bacharach point} of $\p$. 
\end{enumerate}
\end{proposition}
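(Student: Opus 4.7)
The plan is to prove the four equivalences by a cyclic chain of implications $(2) \Rightarrow (3) \Rightarrow (4) \Rightarrow (1) \Rightarrow (2)$. The engine driving the cycle is B\'ezout's theorem, which appears both as an obstruction --- a line cannot meet an irreducible cubic in four points, nor a conic in seven --- and as a degree count, since two distinct plane cubics intersect in nine points counted with multiplicity.

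For $(2) \Rightarrow (3)$ I argue by contrapositive: if four of the $p_i$ lie on a line $\ell$, then any cubic $C$ through $\bfp$ satisfies $|C \cap \ell| \geq 4 > 3$, forcing $\ell$ to be a component of $C$ and contradicting irreducibility; the analogous argument with the bound $|C \cap Q| \leq 6$ handles seven points on a conic. For $(1) \Rightarrow (2)$, finiteness of $L_\bfp \cap \mathcal D$ forces both $L_\bfp \not\subseteq \mathcal D$ and $\dim L_\bfp = 1$ (otherwise a linear space of dimension $\geq 2$ would meet the hypersurface $\mathcal D$ in positive dimension), so a generic member of the pencil is smooth and in particular irreducible.

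The substantive work lies in $(3) \Rightarrow (4) \Rightarrow (1)$. For $(3) \Rightarrow (4)$ I proceed in three stages. First, show that under (3) the eight points impose independent conditions on $S^3\mathbb{C}^3$, so $L_\bfp$ is a pencil; the standard technique is to exhibit, for each index $i$, a reducible cubic through $\bfp \setminus \{p_i\}$ but not through $p_i$, assembled from a conic through five of the $p_j$ and a line through two others, with the partition dictated by (3). Second, rule out a base curve in $L_\bfp$: if a line $\ell$ (resp.\ conic $Q$) were a common factor, each pencil member would factor as $\ell$ times a quadric (resp.\ $Q$ times a line) through the $\geq 5$ (resp.\ $\geq 2$) points of $\bfp$ off $\ell$ (resp.\ off $Q$), and (3) forces such residuals to form a zero-dimensional family, contradicting $\dim L_\bfp = 1$. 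Third, apply B\'ezout to two transverse pencil members $C_1,C_2$ to obtain $|C_1 \cap C_2| = 9$; a short dimension count on the Jacobian conditions at each $p_i$ shows that every $p_i$ appears with multiplicity one in $Z(L_\bfp)$, so the residual of $\bfp$ is a single reduced point $p_9$. The implication $(4) \Rightarrow (1)$ then follows from Bertini: with $Z(L_\bfp)$ a set of nine distinct reduced points, a generic pencil member is smooth, so $L_\bfp \not\subseteq \mathcal D$ and $|L_\bfp \cap \mathcal D| \leq \deg \mathcal D = 12$.

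The main obstacle is the second stage of $(3) \Rightarrow (4)$: the case analysis of candidate base curves must handle the degenerate sub-configurations still allowed by (3) --- three collinear points paired with five on a conic, six concyclic points, and the like --- where cheap general-position arguments fail. This step is also what most directly motivates the combinatorial framework of triples-on-lines and sextuples-on-conics that organizes the rest of the paper.
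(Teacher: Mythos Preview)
Your cyclic route is cleaner than the paper's hub-and-spoke through (2): by exhibiting explicit reducible cubics through seven of the points, you show directly that (3) forces eight independent conditions, avoiding the h-vector citation the paper uses to handle $\dim L_\bfp \geq 2$.

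There is, however, a genuine error in your third stage of $(3)\Rightarrow(4)$. The claim that every $p_i$ has multiplicity one in $Z(L_\bfp)$ is \emph{false} under (3). Take $p_1,p_2,p_3$ on a line $\ell$ and $p_1,p_4,\ldots,p_8$ on a conic $q$; this configuration satisfies (3), yet $\ell q\in L_\bfp$ is singular at $p_1$, so its local intersection with any second pencil member at $p_1$ is at least $2$ and the base scheme is nonreduced there. (This is precisely the phenomenon the paper later isolates as quatroid $\calQ_{10}$.) No Jacobian dimension count will rescue this. Fortunately the claim is unnecessary: once stages 1 and 2 establish that $Z(L_\bfp)$ is a zero-dimensional complete intersection of degree $9$, the residue with respect to the eight reduced points $\bfp$ has degree $9-8=1$, hence is a single reduced point, possibly supported at one of the $p_i$. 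The same false premise contaminates your $(4)\Rightarrow(1)$, where you assume ``nine distinct reduced points.'' To repair it, observe that if every member of the pencil were singular, Bertini would force a common singular point $p$, and two pencil members singular at $p$ would meet there with multiplicity at least $4$, pushing $\deg Z(L_\bfp)$ past $9$; so the generic member is smooth. You should also justify $\dim L_\bfp=1$ from (4) alone, since in a cyclic proof you cannot borrow it from $(3)\Rightarrow(4)$.
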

\begin{proof}
Observe preliminarily that by B\'ezout's Theorem, the intersection of two cubics is either of positive dimension or it is a $0$-dimensional scheme of degree $9$ in $\bbP_{\mathbb{C}}^2$. We show part (2) is equivalent to all others.

\noindent\underline{Part (1)$\implies$ Part (2)}: The linear space $L_{\p}$ has dimension at least $1$ because it is defined by eight linear conditions on $\bbP S^3 \bbC^3 \simeq \bbP_{\mathbb{C}}^9$. Hence, if $L_{\p} \cap \mathcal D$ is finite, there exists some $C \in L_{\p} \backslash \mathcal D$, that is, an irreducible cubic through $\p$. 

\noindent\underline{$\neg$Part (1)$\implies \neg$Part (2)}: If the intersection $L_{\p} \cap \mathcal D$ is infinite, then either $L_{\bfp} \subseteq \calD$ or $\dim L_{\bfp} \geq 2$. 

In the first case, $L_{\bfp}$ is a pencil of singular cubics. By Bertini's Theorem \cite[p.137]{GrifHar:PrinciplesAlgebraicGeometry} the generic element of $L_\bfp$ is smooth away from the base locus of $L_\bfp$, so there is a point $p\in \bbP_{\mathbb{C}}^2$ such that all elements of $L_\bfp$ are singular at $p$. Hence, the fat point $2p$ supported at $p$ is contained in $Z(L_{\p})$. Since $\deg(2p) = 3$, and the base locus of $L_\bfp$ contains at least $7$ additional distinct points, this implies $Z(L_{\p})$ is not a $0$-dimensional scheme of degree $9$ and therefore must contain a positive dimensional component. This component is a curve in $\bbP_{\mathbb{C}}^2$ and all elements of $L_\bfp$ are multiples of the polynomial $f$ defining it. The polynomial $f$ has degree smaller than three since otherwise $L_{\p}$ is just a point. Hence, $L_\bfp$ contains no irreducible element.

Now suppose $\dim L_{\bfp} \geq 2$. If the points of $\bfp$ lie on a line or on a conic, then $L_{\bfp}$ is generated by the corresponding equation of degree $1$ or $2$ so it does not contain irreducible elements. Indeed, if $L_{\bfp}$ contains an equation not generated by the ones of lower degree, then $\bfp$ would be contained in a complete intersection of type $(1,3)$ or $(2,3)$, which contains at most $6$ points. In particular, we may assume the ideal of $\bfp$ has no elements of degree $1$ or $2$. We apply \cite[Thm. 3.6]{BigGerMig:GeometricConsequencesMacaulay}, and we refer to \cite[Thm. 2.3]{ChiGes:DecompTerraciniCubic} for a simpler statement: if $\dim L_{\bfp} \geq 2$, and the ideal of $\bfp$ contains no elements of degree $2$, then the h-vector of $\bfp$ is $(1,2,3,1,1)$; in this case $\bfp$ has five points on a line $\ell=0$. Let $q_1,q_2,q_3$ be three linearly independent quadrics vanishing on the (at most) three points of $\bfp$ not lying on $\ell$, then $L_\bfp = \langle \ell q_1, \ell q_2, \ell q_3\rangle$. As before, this shows that all elements of $L_{\bfp}$ share a common factor. 

\noindent\underline{Part (2)$\implies$Part (3)}: By B\'ezout's Theorem, an irreducible cubic contains no four points on a line or seven on a conic.

\noindent\underline{Part (3)$\implies$Part (2)}: We showed above that if $\dim L_{\bfp} \geq 2$ then either $\bfp$ lies on a line or a conic, or at least five elements of $\bfp$ lie on a line. Therefore, assume $\dim L_{\bfp} = 1$ and suppose all of its elements are reducible. We will show that $\bfp$ has at least four points on a line or at least seven on a conic. As before, by Bertini's Theorem, all elements of $L_{\bfp}$ have at least one common singular point, and we deduce that either a line or a conic is contained in the base locus of $L_{\bfp}$.

Suppose the line $\{ \ell = 0\}$ is a line in the base locus so that $L_{\bfp} = \langle \ell q_1,\ell q_2 \rangle$. We consider two cases. If $\{ q_1 = q_2 = 0\}$ is $0$-dimensional, then it contains at most four points of $\bfp$, and at least four points of $\bfp$ lie on $\{ \ell = 0\}$. If $\{ q_1 = q_2 = 0\}$ is positive dimensional then $q_1 = \ell' \ell_1$ and $q_2 = \ell' \ell_2$: then $\bfp \subseteq \{\ell=0\} \cup \{\ell' = 0\} \cup \{\ell_1=\ell_2 = 0\}$, so that at least four points lie on a line. 

Suppose the conic $\{q = 0\}$ is a conic in the base locus, so that $L_{\bfp} = \langle q \ell_1 ,q \ell_2 \rangle$. Then $\bfp \subseteq \{ q =0\} \cup \{ \ell_1 = \ell_2 = 0\}$, showing that at least seven points lie on a conic.

\noindent\underline{Part (2)$\implies$Part (4)}: This is the classical Cayley--Bacharach Theorem. If $L_{\bfp}$ contains at least one irreducible element, then the base locus of $L_\bfp$ is a $0$-dimensional scheme of degree $9$. The residue is, by definition, cut out by the ideal $(L_{\bfp}) : I(\bfp)$, where $(L_\bfp)$ denotes the ideal generated by the cubics of $L_\bfp$. Since $\deg( Z(L_\bfp)) = 9$, and $\bfp$ consists of $8$ points with $\bfp \subseteq Z(L_\bfp)$, we have that the degree of the residue is $1$. This shows that it is a reduced point $p_9$.

\noindent\underline{Part (4)$\implies$Part (2)}: If the residue is a reduced point, then $Z(L_\bfp)$ is $0$-dimensional. In this case, the same arguments used above show that the generic element of $L_{\bfp}$ is an irreducible cubic.
\end{proof}

A consequence of \autoref{prop:FibreIsFinite} is that if either $L_\bfp$ is not a pencil, or if $L_\bfp \subseteq \calD$, then there are no rational cubics through $\bfp$. When $L_{\p}$ is a pencil intersecting $\mathcal D$ in finitely many points, write $\mydef{\textrm{imult}_C(L_{\p}, \mathcal D)}$ for the \mydef{intersection multiplicity} of $L_\bfp$ and $\calD$ at a point $C \in L_{\p} \cap \mathcal D$ (see \cite[Section 3]{Fulton2008}): in other words, $\mydef{\textrm{imult}_C(L_{\p}, \mathcal D)}$ is the degree of the component supported at $C$ in the $0$-dimensional scheme $L_\bfp \cap \calD$. The related concept of the \mydef{multiplicity} of a point $C \in \mathcal D$ is defined as follows:
\begin{equation}\label{eq:multiplicity}
\mydef{\textrm{mult}_{\mathcal D}(C)}=\min \{ \mathrm{imult}_C(L, \mathcal D) \mid L \in \mathrm{Gr}(2,S^3 \bbC^3) \text{ with } C \in L\},
\end{equation} 
where $\mydef{\Gr(2,S^3 \bbC^3)}$ is the Grassmannian of projective lines in $\mathbb{P}S^3\mathbb{C}^3$. By a semicontinuity argument, a generic line $L$ through $C$ realizes $\imult_C(L,\calD) = \mult_\calD(C)$.

Our main interest for this work is the following quantity. Recall that $\calC$ is the subset of $\calD$ consisting of irreducible cubics.
\begin{definition}\label{def:dp}
For $\p \in \mathcal P$, define 
\[
\mydef{d_{\p}} = \sum_{C \in L_{\p} \cap \mathcal C} \textrm{imult}_C(L_\p,\mathcal D)
\]
to be the number of rational cubics through $\p$ \emph{counted with multiplicity}.
\end{definition}

The following is a direct consequence of \autoref{prop:FibreIsFinite}.

\begin{corollary}
\label{cor:alwaysfinitelymanyrationals}
For $\p \in \mathcal P$ the value $d_{\p}$ is finite.
\end{corollary}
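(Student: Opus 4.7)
The plan is to deduce the corollary directly from \autoref{prop:FibreIsFinite} by a case split on whether or not there is an irreducible cubic through $\bfp$. Recall that $\mathcal C$ consists of \emph{irreducible} singular cubics, while $\mathcal D$ is the full discriminant hypersurface of singular cubics, so $\mathcal C \subseteq \mathcal D$.

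First I would handle the case in which no irreducible cubic passes through $\bfp$. In this case every element of $L_{\bfp}$ is reducible, so $L_{\bfp} \cap \mathcal C = \varnothing$ and the sum defining $d_{\bfp}$ is empty. Hence $d_{\bfp}=0$ and there is nothing to prove.

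In the complementary case, there is some irreducible cubic through $\bfp$. By the implication (2)$\Rightarrow$(1) of \autoref{prop:FibreIsFinite}, the set $L_{\bfp} \cap \mathcal D$ is finite; in particular $L_{\bfp}$ is a pencil which is not contained in $\mathcal D$, so the intersection multiplicity $\mathrm{imult}_{C}(L_{\bfp},\mathcal D)$ is well-defined and finite at every point $C \in L_{\bfp}\cap \mathcal D$. Since $\mathcal C \subseteq \mathcal D$, the index set $L_{\bfp} \cap \mathcal C$ in the definition of $d_{\bfp}$ is finite, and each summand is a finite nonnegative integer. Therefore $d_{\bfp}$ is a finite sum of finite quantities, hence finite.

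There is essentially no obstacle: the work has already been done in \autoref{prop:FibreIsFinite}, and the only thing worth being careful about is distinguishing the two kinds of ``infinity'' that could a priori spoil finiteness of $d_{\bfp}$, namely infinitely many summands (ruled out because $L_{\bfp}\cap\mathcal D$ is finite once a single irreducible cubic through $\bfp$ exists) and an infinite intersection multiplicity at some summand (ruled out because $L_{\bfp}\not\subseteq \mathcal D$ in that same case).
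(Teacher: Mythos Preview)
Your argument is correct and follows exactly the route the paper intends: the corollary is stated there as ``a direct consequence of \autoref{prop:FibreIsFinite}'' with no further proof, and your case split (no irreducible cubic through $\bfp$ gives an empty sum; otherwise \autoref{prop:FibreIsFinite}(2)$\Rightarrow$(1) forces $L_{\bfp}\cap\mathcal D$ finite and $L_{\bfp}$ a pencil not contained in $\mathcal D$) is precisely how one unpacks that consequence.
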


The set-difference of $L_{\p} \cap \mathcal D$ and $L_{\p} \cap \mathcal C$ is comprised of the \emph{reducible} cubics through $\p$. Taking intersection multiplicity into account allows for a stronger correspondence. Hence, for any $\p \in \mathcal P$ satisfying  the conditions of \autoref{prop:FibreIsFinite}, we define 
$$\mydef{r_{\p}} = \sum_{\text{reducible }C \in L_{\p} \cap \mathcal D} \textrm{imult}_C(L_{\p},\mathcal D)=\sum_{C \in L_{\p} \cap \mathcal D \backslash \mathcal C} \textrm{imult}_C(L_{\p},\mathcal D).$$
If $\bfp$ fails to satisfy the conditions in \autoref{prop:FibreIsFinite}, we set $r_\bfp = \infty$. The following proposition justifies this convention and expresses how $r_{\p}$ measures the difference of  $L_{\p} \cap \mathcal D$ and $L_{\p} \cap \mathcal C$. 

\begin{proposition}
\label{prop:FibreHas12Points}
Let $\p \in \mathcal P$ satisfy any of the equivalent conditions in  \autoref{prop:FibreIsFinite}. Then 
\[
12=\sum_{C \in L_{\p} \cap \mathcal D} \imult_C(L_{\p}, \mathcal D)  =r_{\p}+\sum_{C \in L_{\p} \cap \mathcal C} \imult_C(L_{\p},\mathcal D) = r_{\p} + d_{\p}. 
\]
If $\bfp$ does not satisfy the condition of \autoref{prop:FibreIsFinite}, then there are infinitely many reducible cubics through $\p$ and moreover $d_{\p}=0$.
\end{proposition}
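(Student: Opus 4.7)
The plan is to deduce both statements from \autoref{prop:FibreIsFinite} together with the fact, recorded in the introduction, that $\calD \subseteq \bbP S^3\bbC^3 \cong \bbP^9_\bbC$ is an irreducible hypersurface of degree $12$.

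For the first assertion, assume $\bfp$ satisfies the equivalent conditions of \autoref{prop:FibreIsFinite}. Then $L_\bfp$ is a line in $\bbP S^3\bbC^3$ (condition (1) forces $\dim L_\bfp = 1$, since otherwise $L_\bfp \cap \calD$ would be infinite) and $L_\bfp \cap \calD$ is a $0$-dimensional subscheme. By Bézout's Theorem applied to the line $L_\bfp$ and the degree-$12$ hypersurface $\calD$, the total length of $L_\bfp \cap \calD$ equals $12$, which is exactly $\sum_{C \in L_\bfp \cap \calD} \imult_C(L_\bfp,\calD)$. The decomposition $\calD = \calC \sqcup (\calD \setminus \calC)$ into irreducible and reducible cubics then splits this sum as $d_\bfp + r_\bfp$ by the definitions of $d_\bfp$ and $r_\bfp$.

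For the second assertion, suppose $\bfp$ does not satisfy the conditions of \autoref{prop:FibreIsFinite}. The contrapositive direction $\neg(1) \Rightarrow \neg(2)$ already worked out in the proof of that proposition shows more than the failure of (2): in every sub-case treated there (the pencil $L_\bfp \subseteq \calD$, the case $\dim L_\bfp \geq 2$ with a linear or conic factor in the base locus, and the h-vector $(1,2,3,1,1)$ case), all elements of $L_\bfp$ share a common factor of degree $\leq 2$, so every element of $L_\bfp$ is reducible. Since $L_\bfp$ has infinitely many elements, there are infinitely many reducible cubics through $\bfp$, justifying the convention $r_\bfp = \infty$. The absence of any irreducible element of $L_\bfp$ gives $L_\bfp \cap \calC = \varnothing$, so $d_\bfp = 0$ directly from \autoref{def:dp}.

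I expect no real obstacle: the first part is a one-line application of Bézout once the degree of $\calD$ and the pencil structure are in hand, while the second part is a bookkeeping consequence of the case analysis already carried out in the proof of \autoref{prop:FibreIsFinite}. The only point to watch is to invoke the stronger statement proved there, namely that failure of (1) forces a common factor of all cubics in $L_\bfp$, not merely the absence of a single irreducible cubic.
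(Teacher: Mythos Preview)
Your proof is correct and follows essentially the same route as the paper: B\'ezout on the degree-$12$ hypersurface $\calD$ for the first part, and the case analysis inside the proof of \autoref{prop:FibreIsFinite} for the second. Where the paper simply writes ``the final statement follows from \autoref{prop:FibreIsFinite},'' you have explicitly unpacked the stronger conclusion (a common factor in every element of $L_\bfp$) that is proved there, which is exactly the right thing to invoke.
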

\begin{proof}If $\p$ satisfies  \autoref{prop:FibreIsFinite}, then $L_{\p}$ is a line and $L_{\p} \cap \mathcal D$ is finite. 
Since $\mathcal D$ is a projective hypersurface of degree $12$, the intersection $L_\bfp \cap \mathcal D$ is a $0$-dimensional scheme of degree $12$. This shows the left equality. The middle equality follows from the definition of $r_{\p}$. The last equality is the definition of $d_{\p}$. The final statement  follows from \autoref{prop:FibreIsFinite}.
\end{proof}
 \autoref{prop:FibreHas12Points} answers the problem of counting (with multiplicity) the \emph{singular} cubics through eight distinct points: when finite, this number is $12$. The main insight of \autoref{prop:FibreHas12Points} is that the number $d_{\p}$ is determined as the difference $12 - r_\bfp$, and $r_{\p}$ is the number of reducible cubics through $\p$ counted with multiplicity. This main idea anchors our story.
\begin{theorem}
\label{thm:Characterization}
Let $\p \in \mathcal P$. The following are equivalent:
\begin{enumerate}
\item Either $\p$ has three points on a line or six points on a conic,
\item There is a reducible cubic through $\p$, i.e. $r_{\p}>0$,
\item The inclusion $\pi^{-1}(\p) \subsetneq \overline{\pi}^{-1}(\p) $ is strict,
\item Counted with multiplicity, there are fewer than $12$ rational cubics through $\p$, i.e. $d_{\p}<12$.
\end{enumerate}
\end{theorem}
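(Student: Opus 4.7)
The plan is to verify the four-way equivalence through a short cycle, deducing (3) and (4) from (2) using the results already established and concentrating the real work on (1)$\Leftrightarrow$(2).

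For (2)$\Leftrightarrow$(3) and (2)$\Leftrightarrow$(4), both are essentially consequences of the setup. The first is tautological: by definition $\bar\pi^{-1}(\bfp)=L_\bfp\cap\calD$ and $\pi^{-1}(\bfp)=L_\bfp\cap\calC$, with $\calD\setminus\calC$ consisting precisely of the reducible cubics, so the inclusion is strict if and only if $L_\bfp$ meets $\calD\setminus\calC$, i.e., if and only if $r_\bfp>0$. For the second, I would appeal to \autoref{prop:FibreHas12Points}: if $\bfp$ satisfies the conditions of \autoref{prop:FibreIsFinite}, then $d_\bfp=12-r_\bfp$ so $d_\bfp<12 \Leftrightarrow r_\bfp>0$; and if $\bfp$ fails them, then $d_\bfp=0<12$ and $r_\bfp=\infty>0$ by convention, while (1) is automatic because failure of \autoref{prop:FibreIsFinite} forces four collinear or seven conconic points, each of which entails the weaker condition in (1).

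The main combinatorial content is therefore (1)$\Leftrightarrow$(2). For (1)$\Rightarrow$(2), I would build a reducible cubic through $\bfp$ explicitly. If three points of $\bfp$ lie on a line $\ell$, then through the remaining five points there always passes a quadric $q$, since five linear conditions on the $5$-dimensional projective space of plane quadrics must have a common solution; then $\ell \cup q$ is a reducible cubic through $\bfp$. If six points lie on a conic $q$, take a line $\ell$ through the remaining two and use $\ell \cup q$.

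For the converse (2)$\Rightarrow$(1), I would enumerate the reducible factorization types of a plane cubic: either a union of three lines (allowing repeated factors) or a line together with an irreducible conic. In the first case, pigeonhole on eight points distributed among three lines forces at least $\lceil 8/3\rceil = 3$ points on some line. In the second case, if the line component meets $\bfp$ in $a$ points and the conic component meets $\bfp$ in $b$ points, then $a+b\ge 8$ (since every point of $\bfp$ lies on at least one component), so either $a\ge 3$ (three collinear) or $b\ge 6$ (six on a conic). The only real obstacle here is keeping the reducible factorization bookkeeping clean, especially allowing for repeated linear factors; no deeper ingredient beyond B\'ezout and the already-proved \autoref{prop:FibreHas12Points} is needed.
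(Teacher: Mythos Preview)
Your proposal is correct and follows essentially the same approach as the paper: the equivalence (2)$\Leftrightarrow$(3) is tautological, (2)$\Leftrightarrow$(4) comes from \autoref{prop:FibreHas12Points} via $d_\bfp+r_\bfp=12$ (with the infinite case handled separately), and (1)$\Leftrightarrow$(2) is the explicit construction plus pigeonhole. The only cosmetic difference is that the paper splits the link between (2), (3), (4) into the two implications (4)$\Rightarrow$(3) and (2)$\Rightarrow$(4), whereas you argue (2)$\Leftrightarrow$(4) directly; your careful use of ``quadric'' rather than ``conic'' for the curve through the remaining five points is in fact slightly more precise than the paper's wording.
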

\begin{proof}
If $L_\bfp \cap \calD$ is infinite, the result follows directly from \autoref{prop:FibreIsFinite}. Hence, suppose that $L_\bfp$ is a pencil and $L_\bfp \cap \calD$ is finite.

\noindent\underline{Part (1)$\iff$Part (2)}: If $\p$ has three points on a line $\ell$ then the union of $\ell$ and a conic through the remaining five points is a reducible cubic through $\p$. Similarly, if six points are on a conic, then the union of that conic and the line containing the other two is a reducible cubic through $\p$. Conversely, if there is a reducible cubic through $\p$, the pigeonhole principle implies that either three are on a line or six are on a conic since reducible cubics are either unions of three lines or a line-conic union. 

\noindent\underline{Part (2)$\iff$Part (3)}: This is immediate from the definitions.
\noindent\underline{Part (4)$\implies$Part (3)}:
Since the fibre $\overline{\pi}^{-1}(\p)$  is finite, by  \autoref{prop:FibreHas12Points} it  consists of $12$ cubics counting multiplicity. From the hypothesis, $\pi^{-1}(\p)$ does not, so part (3) is true.

\noindent\underline{Part (2) $\implies$ Part (4)}:
Since the fibre is finite, $\overline{\pi}^{-1}(\p)$ consists of $12$ points counting multiplicity by  \autoref{prop:FibreHas12Points}. Since part (2) is true, at least one cubic in $\overline{\pi}^{-1}(\bfp)$ is reducible, and so there are fewer than $12$ rational cubics through $\p$ counting multiplicity.
\end{proof}

\section{Quatroids}
\label{sec:Quatroids}
As a consequence of the results in the previous section, any configuration $\p \in \mathcal P$ admitting fewer than $12$ rational cubics must lie on at least one of the following two types of hypersurfaces in $\mathcal P$:
\[
\mydef{X_{i_1i_2i_3}} = \{\p \in \mathcal P \mid p_{i_1}\ldots p_{i_3} \text{ are on a line}\}, \quad \quad \mydef{Y_{i_1i_2\cdots i_6}} = \{\p \in \mathcal P \mid p_{i_1}\ldots p_{i_6} \text{ are on a conic}\}. 
\]
Elements of the intersection lattice of these ${{8}\choose{3}} + {{8}\choose{6}} 
=84$ hypersurfaces are indexed by pairs $\mathcal Q=(\mathcal I, \mathcal J)$ of triples $\mathcal I$ and sextuples $\mathcal J$ of $\{1,2,\ldots,8\}$. Define
\[
\mydef{\mathcal Z_{\mathcal Q}} = \biggl(\bigcap_{I \in \mathcal I} X_I \biggr) \cap \biggl(\bigcap_{J \in \mathcal J} Y_J \biggr)  \quad \text{ and } \quad \mydef{\mathcal S_{\mathcal Q}} = \calZ_\calQ \setminus \bigcup_{\calQ'} \calZ_{\calQ'}
\]
where the union ranges over all $\calQ'$ such that $\calZ_{\calQ'} \subsetneq \calZ_{\calQ}$.

By construction, the stratification $\mydef{\mathcal S}$ of nonempty loci $\mathcal S_{\mathcal Q}$ partitions the parameter space $\mathcal P$. The strata, and thus their indices, are naturally ordered by $\mathcal Q \mydef{\leq} \mathcal Q'$ if and only if $\mathcal S_{\mathcal Q} \subseteq \overline{\mathcal S_{\mathcal Q'}}$.

 With this setup, we introduce the main combinatorial object of interest. Although we focus on fields of characteristic zero, we give the definition for any field $\mydef{\mathbb{K}}$. Write $
 \mydef{\mathcal P_{\mathbb{K}}} $ for the open subset $8$-tuples of distinct points in $(\mathbb{P}_{\mathbb{K}}^2)^8$.
\begin{definition} 
\label{def:Quatroid} Let $\mathbb{K}$ be a field and let $\mathcal I$ and $\mathcal J$ be collections of triples and sextuples of $\{1,2,\ldots,8\}$, respectively. The pair $\mathcal Q=(\mathcal I,\mathcal J)$ is a \mydef{($\mathbb{K}$-representable) quatroid} if there exists a configuration $\p \in \mathcal P_{\mathbb{K}}$ such that $\bfp \in \mathcal S_{\mathcal Q}$. In this case, we say $\mathcal Q$ is \mydef{representable over $\mathbb{K}$}. Equivalently, there exists $\p \in \mathcal P_{\mathbb{K}}$ such that
\begin{itemize}
\item Every triple of points in $\p$ indexed by an element of $\mathcal I$ lies on a line,
\item Every sextuple of points in $\p$ indexed by an element of $\mathcal J$ lies on a conic,
\item No other triple of points in $\p$ lies on a line,
\item No other sextuple of points in $\p$ lies on a conic.
\end{itemize}
Such a configuration $\p$ is said to \mydef{represent} $\mathcal Q$ over $\mathbb{K}$. The set $\calS_\calQ$ is called the \mydef{realization space} of $\mathcal Q$ over $\mathbb{K}$.
\end{definition}

For any $\p  \in \mathcal P$ and $S \subseteq \{1,2,\ldots,8\}$ we write $\mydef{\ell_S}$ for the line through $\{p_s\}_{s \in S}$ and $\mydef{q_S}$ for the conic through $\{p_s\}_{s \in S}$ when such a curve exists and is unique.  Given a configuration $\p$ satisfying any condition in \autoref{prop:FibreIsFinite}, the quatroid represented by $\p$ exactly tracks the reducible cubics through $\p$, as detailed in the following result.

\begin{lemma}
\label{lem:reducibleCubicsThroughp}
Let $\p$ be a configuration satisfying \autoref{prop:FibreIsFinite} and representing a quatroid $\mathcal Q=(\mathcal I, \mathcal J)$. A reducible cubic through $\p$ has one of the forms:
\begin{enumerate}
\item[(a)] $\ell_I\cdot q_{I^c}$ for $I \in \mathcal I$,
\item[(b)] $\ell_{I_1}\cdot \ell_{I_2} \cdot \ell_{(I_1 \cup I_2)^c}$ for \emph{disjoint} $I_1,I_2 \in \mathcal I$,
\item[(c)] $q_{J}\cdot \ell_{J^c}$ for $J \in \mathcal J$.
\end{enumerate}
\end{lemma}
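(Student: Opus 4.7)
The plan is to split on the factorization type of the reducible cubic $C$: either $C = \ell \cdot Q$ with $Q$ an irreducible conic, or $C$ factors as a product of three lines (this subsumes the case where the quadric factor is reducible). Throughout, I would use the standing constraints from \autoref{prop:FibreIsFinite}: any line contains at most three points of $\p$, and any irreducible conic contains at most six.

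For the line-plus-irreducible-conic case, I would set $A = \{i : p_i \in \ell\}$ and $B = \{i : p_i \in Q\}$, so that $A \cup B = \{1,\ldots,8\}$. B\'ezout gives $|A \cap B| \leq |\ell \cap Q| \leq 2$, and together with $|A| \leq 3$ and $|B| \leq 6$, the identity $|A| + |B| = 8 + |A \cap B|$ leaves only three sub-cases: $(|A|, |B|) = (3,5)$ disjoint, $(2,6)$ disjoint, or $(3,6)$ with a single overlap. In the first, $A \in \mathcal{I}$ and $Q$ is the unique irreducible conic through the five points $A^c$, producing form (a). In the second, $B \in \mathcal{J}$, the two points of $B^c$ lie on $\ell$, and $\ell = \ell_{B^c}$, producing form (c). The third sub-case admits both descriptions simultaneously, which is consistent with the lemma's phrasing.

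For the three-lines case $C = \ell_1 \cdot \ell_2 \cdot \ell_3$, I would let $A_i = \{j : p_j \in \ell_i\}$, noting $|A_i| \leq 3$ and $|A_i \cap A_j| \leq 1$ since distinct lines meet in at most a point. Setting $s_1 = \sum_i |A_i|$, $s_2 = \sum_{i<j} |A_i \cap A_j|$, and $s_3 = |A_1 \cap A_2 \cap A_3|$, inclusion-exclusion yields $s_1 - s_2 + s_3 = 8$ with $s_2 \geq 3 s_3$, and this forces exactly two sub-cases: either two $A_i$'s are disjoint triples with the third equal to the complementary pair, or all three are triples with exactly one pair of them sharing a single point. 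In both sub-cases, I can select two disjoint triples $I_1, I_2 \in \mathcal{I}$ from among the $A_i$'s; the remaining two points $(I_1 \cup I_2)^c$ then necessarily lie on the third line factor, which coincides with $\ell_{(I_1 \cup I_2)^c}$ by uniqueness of the line through two points. This produces form (b).

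The main obstacle I anticipate is the bookkeeping in the three-lines sub-case where all three incidence sets are triples with one shared point, since it is not immediately obvious that two disjoint triples can always be extracted so that $\ell_{(I_1 \cup I_2)^c}$ unambiguously recovers the third factor. A minor sanity check is also required to confirm that $q_{I^c}$ in form (a) is a well-defined irreducible conic: if three points of $I^c$ were collinear, the spanning line would become a factor of $C$, placing us back in the three-lines case and giving description (b) instead. Once these verifications are in place, every reducible cubic through $\p$ is captured by exactly one of the forms (a), (b), or (c).
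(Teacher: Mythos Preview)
Your proposal is correct and follows essentially the same approach as the paper: both proofs split a reducible cubic into its irreducible components and use the constraints of \autoref{prop:FibreIsFinite} (no four points on a line, no seven on a conic) to pin down how the eight points distribute over those components. The paper compresses the argument to a pigeonhole sentence and a two-line case split on whether the residual quadric is irreducible, whereas you carry out the inclusion--exclusion bookkeeping explicitly; in particular, your verification that in the three-lines case with $s_1=9$, $s_2=1$ one can always extract two \emph{disjoint} triples among the $A_i$ (since only one pair overlaps) is exactly the detail the paper suppresses. One small remark: your ``sanity check'' about $q_{I^c}$ is phrased a bit awkwardly---once you have assumed $Q$ irreducible, three collinear points among $I^c$ are ruled out directly by B\'ezout on $Q$, rather than by ``falling back'' to the three-lines case---but the conclusion is correct either way.
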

\begin{proof}
If there is a reducible cubic through $\p$, by the pigeonhole principle, and \autoref{prop:FibreIsFinite}, either exactly six points of $\bfp$ lie on a conic or exactly three points of $\bfp$ lie on a line. 

In the first case, the reducible cubic must be the union of the conic containing six points and the line containing the remaining two. The six points on the conic must be indexed by some $J \in \mathcal J$, so the reducible cubic has the form (c). 

In the second case, the reducible cubic must be the union of the line containing three points and the \emph{quadric} containing the remaining five. Following a similar argument as in the first case, if the quadric is a irreducible, then the cubic has the form (a) and otherwise it has the form (b). 
\end{proof}

\begin{remark}
\label{rem:OrderIsCombinatorial}
An consequence of \autoref{lem:reducibleCubicsThroughp} is that the ordering $(\calI,\calJ)\leq (\calI',\calJ')$ may be expressed combinatorially. Given our conventions, the full characterization is rather technical. However, we list two simple sufficient conditions: 
\begin{enumerate}
\item[(a)] $(\mathcal I,\mathcal J) \leq (\mathcal I', \mathcal J')$ if $\mathcal I' \subseteq \mathcal I$ and $\mathcal J' \subseteq \mathcal J$,
 \item[(b)] $(\mathcal I, \mathcal J) \leq (\mathcal I', \mathcal J')$ if $\mathcal I' \subseteq \mathcal I$ and $\mathcal J' \subseteq \mathcal J \cup \mathcal I^2$, where $\calI^2$ denotes the set of subsets obtained as union of two elements of $\calI$. 
\end{enumerate}
Condition (b) completely characterizes $\leq$ restricted to a class of quatroids called \emph{B\'ezoutian quatroids} defined in \autoref{def:Bezoutian}.
\end{remark}

We now digress into several remarks on the choice of the term \emph{quatroid}.
\begin{remark}[Inspiration for the term]
The term \emph{(representable) quatroid} is inspired by the term \emph{representable matroid}. Quatroids partially extend the concept of matroids; representable matroids encode the affine linear dependencies among points, whereas representable quatroids track linear and quadratic dependencies. There are several standard references for matroids. We suggest \cite{Oxley}.
\end{remark}

\begin{remark}[Abstraction of quatroids]
There are several natural ways to extend  \autoref{def:Quatroid} to include point configurations with more points or in higher dimensional spaces. We use the specific definition in \autoref{def:Quatroid} for the sake of brevity since this is all we need for the purpose of the paper. We leave the challenge of defining a \emph{non-representable quatroid} for future work.  If $\mathcal Q=(\mathcal I,\mathcal J)$ is not a quatroid, we merely call it a \mydef{pair}.
\end{remark}

\begin{remark}[Pascal's Theorem to linearize conic conditions]
Pascal's Theorem states that six points are on a conic if and only if the three auxiliary intersection points of a hexagram as shown  in \autoref{fig:hexagrammummysticum} lie on a line, called the \mydef{Pascal line}. One may attempt to convert conic dependencies into linear dependencies using Pascal's Theorem. This result may be applied to $60$ orderings of the six points, introducing $15$ hexagram lines, $45$ auxiliary points, and $60$ Pascal lines. The right-hand image in \autoref{fig:hexagrammummysticum} illustrates a fraction of how complicated the entire arrangement gets, and thus, the benefit of recording the conic conditions directly.
\end{remark}

\begin{figure}[!htpb]
\includegraphics[scale=0.3]{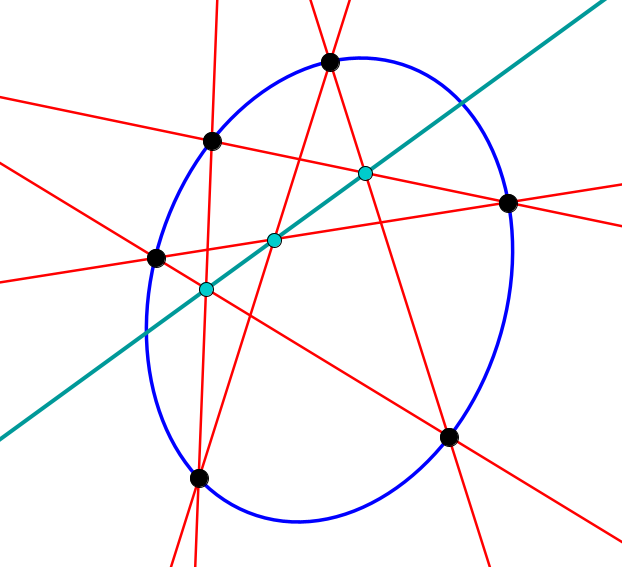} \quad \quad \quad \quad 
\includegraphics[scale=0.23]{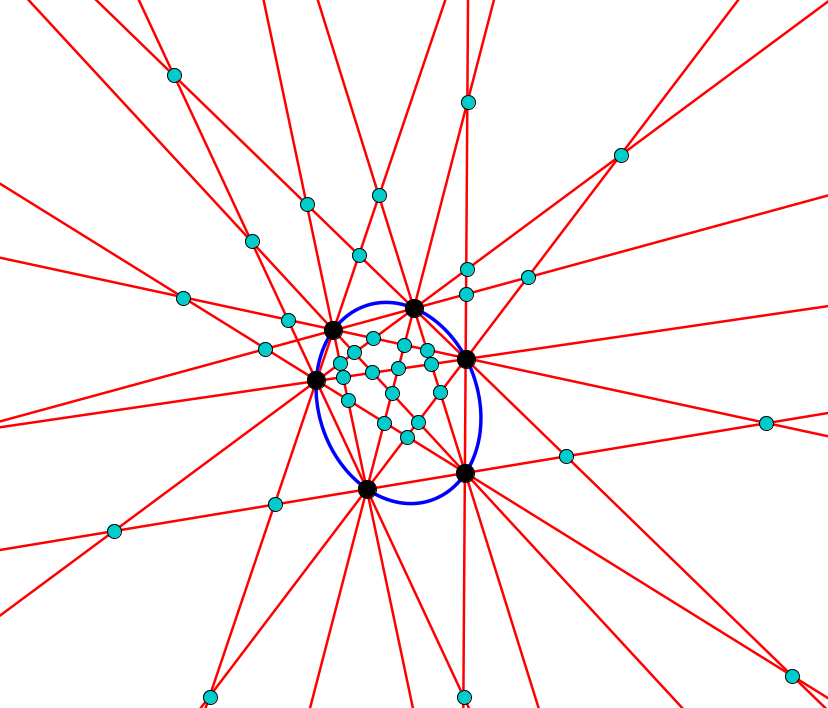}
\caption{(Left) Illustration of Pascal's Theorem. (Right) The $15$ hexagram lines and a subset of the $45$ auxiliary points (in view) obtained by applying Pascal's Theorem $60$ times. The Pascal lines are not drawn.}
\label{fig:hexagrammummysticum}
\end{figure}

\begin{remark}[Simplicity]
Our definition of a quatroid requires that a representative point configuration $\p$ consists of \emph{distinct} points. Such a restriction for matroids means that the matroid is \emph{simple}: the matroid has no parallel elements (repeated points) and the matroid has no loops (consists of points in $\mathbb{P}_{\mathbb{C}}^2$). We suspect that an effective definition of \emph{quatroid} which covers the case of repeated points would need to involve additional information about this repetition. Possibly, there is a way to combinatorially track some information encoded in the \emph{Hilbert scheme} of eight points in $\mathbb{P}_{\mathbb{C}}^2$. 
\end{remark}

\begin{remark}
For the remainder of this article, we assume that $\mathbb{K}$ is $\mathbb{Q}$, $\mathbb{R}$, or $\mathbb{C}$.
\end{remark}

Given \autoref{def:Quatroid}, a natural question emerges:
\begin{center}
\emph{Which pairs $\mathcal Q=(\mathcal I, \mathcal J)$ are representable  quatroids?}
\end{center}
\noindent We address this problem by providing two necessary conditions on $\mathcal Q$ to be a quatroid, finding all $\mathcal Q$ satisfying those conditions, and subsequently identifying which of those candidates are quatroids by either exhibiting an element $\bfp \in \calP$ representing it or proving that no such $\bfp$ exists.

\subsection{Necessary conditions for being a quatroid} 
The \mydef{underlying matroid} of a quatroid $\calQ = (\calI , \calJ)$ is the matroid $\mydef{\mathcal M_{\mathcal I}}$ whose nonbases of size three are given by $\mathcal I$. The realization space of $\calM_\calI$ is $\mydef{\mathcal S_{\mathcal I}}=\bigcup_{\mathcal J} \mathcal S_{(\mathcal I,\mathcal J)}$ because if $\bfp$ represents $\calQ$, then it represents $\calM_{\calI}$ as well. 
The matroid $\mathcal M_{\mathcal I}$ is \mydef{representable} if $\mathcal S_{\mathcal I}$ is nonempty.  We call a pair $\mathcal Q=(\mathcal I,\mathcal J)$  \mydef{matroidal} whenever $\mathcal M_{\mathcal I}$ is representable. A representable quatroid $\mathcal Q$ is necessarily matroidal. Additional necessary conditions for $\mathcal Q$ to be representable rely upon B\'ezout's Theorem.

\begin{definition}
\label{def:WeakCriteria}
A pair $\mathcal Q=(\mathcal I,\mathcal J)$ satisfies \mydef{B\'ezout's weak criteria} if 
\begin{enumerate}
\item ``A line and conic meet in at most two points'':
\[
 |I \cap J|\leq 2 \text{ for all } (I,J) \in \mathcal I \times \mathcal J.
\]
\item ``Two lines meet in at most one point'':
\[
|I_1 \cap I_2|>1 \text{ for some $I_1,I_2 \in \calI$} \implies I \in \mathcal I \text{ for every }3\text{-subset of } I_1 \cup I_2. 
\]
\item ``Two conics meet in at most four points'':
$$|J_1 \cap J_2|>4 \text{ for some $J_1,J_2 \in \calJ$} \implies J \in \mathcal J \text{ for every }6\text{-subset of } J_1 \cup J_2.$$
\end{enumerate}
\end{definition}

\begin{example}
\label{ex:weakcriteria1}
The pair $\mathcal Q=(\{123,145\},\{234678\})$ satisfies all of B\'ezout's weak criteria. The pair $\mathcal Q'=(\{123,234\},\emptyset)$ satisfies the first and (vacuously) the third of B\'ezout's weak criteria, but not the second. The pair $\mathcal Q''=(\{123\},\{123456\})$ does not satisfy the first of B\'ezout's weak criteria. By adding additional triples to $\mathcal Q'$, one may obtain the pair $(\{123,124,134,234\},\emptyset)$ which does satisfy all of B\'ezout's weak criteria. This procedure is impossible for pairs such as $\mathcal Q''$ which fail B\'ezout's first weak criterion. 
\end{example}

\begin{lemma}
\label{lem:necessaryforquatroid}
If $\mathcal Q$ is representable, then $\mathcal Q$ is matroidal and satisfies B\'ezout's weak criteria.
\end{lemma}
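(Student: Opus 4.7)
The plan is to unpack the definition of representability and then apply B\'ezout's Theorem three times, once for each weak criterion. Suppose $\bfp \in \calP_{\mathbb{K}}$ represents $\calQ = (\calI, \calJ)$. For the matroidal claim, the matroid $\calM_\calI$ has as its size-$3$ nonbases precisely $\calI$, and by \autoref{def:Quatroid} the triples of collinear points of $\bfp$ are exactly the triples indexed by $\calI$; hence $\bfp$ realizes $\calM_\calI$ and $\calS_\calI \supseteq \calS_\calQ \neq \varnothing$, so $\calQ$ is matroidal.

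For the weak criteria, the key observation is that for each $I \in \calI$ and $J \in \calJ$ there are a unique line $\ell_I$ and a unique irreducible conic $q_J$ (in the sense of \autoref{sec:AGandCubics}, since a conic is irreducible by definition) passing through the corresponding points. First, for $(I,J) \in \calI \times \calJ$, the intersection $\ell_I \cap q_J$ is zero-dimensional (the line is not a component of the irreducible conic), so by B\'ezout it has degree at most $1 \cdot 2 = 2$; the distinct points $\{p_i : i \in I \cap J\}$ lie in this intersection, giving $|I \cap J| \leq 2$.

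Second, if $|I_1 \cap I_2| \geq 2$ for $I_1, I_2 \in \calI$, then $\ell_{I_1}$ and $\ell_{I_2}$ share at least two distinct points, hence coincide by B\'ezout. All points indexed by $I_1 \cup I_2$ lie on this common line, so every $3$-subset of $I_1 \cup I_2$ indexes three collinear distinct points of $\bfp$, which by the defining property of a representative must lie in $\calI$. Third, if $|J_1 \cap J_2| \geq 5$, then the irreducible conics $q_{J_1}$ and $q_{J_2}$ share at least five points, which exceeds the B\'ezout bound $2 \cdot 2 = 4$ for distinct irreducible curves of degree two; hence $q_{J_1} = q_{J_2}$, all points of $J_1 \cup J_2$ lie on this conic, and every $6$-subset of $J_1 \cup J_2$ must appear in $\calJ$.

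There is no real obstacle here; the only mild subtlety is the requirement that $q_J$ is irreducible (so that it shares no component with $\ell_I$ or with another $q_{J'}$), which is built into our convention that "conic" means irreducible quadric. The containments of distinct point sets in the relevant intersections use exactly the assumption $\bfp \in \calP_{\mathbb{K}}$ (points pairwise distinct), and the "only if" direction of the defining property of $\calS_\calQ$ is what forces the derived collinear triples and coconic sextuples to actually lie in $\calI$ and $\calJ$.
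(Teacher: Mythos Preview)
Your proof is correct and follows essentially the same approach as the paper's: both argue that a representative is automatically a matroid representative, and then apply B\'ezout's Theorem to the line $\ell_I$ and conic $q_J$ for each of the three weak criteria in turn. Your version is slightly more explicit about why the curves share no common component (irreducibility of $q_J$) and why the point counts are honest (distinctness of the $p_i$), but the underlying argument is identical.
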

\begin{proof}
A representable quatroid is, by definition, matroidal. The proof that a representable quatroid satisfies B\'ezout's weak criteria is suggested by the quotations preceding each criterion in \autoref{def:WeakCriteria} as detailed below. 

Suppose $\p\in\mathcal P$ represents $\mathcal Q$. For every $I \in \calI$, $J\in\calJ$, the line $\ell_I$ and the conic $q_J$ meet in at most two points of $\bfp$ by B\'ezout's Theorem. Therefore, $\calQ$ satisfies the first weak criterion. Similarly, if $I_1,I_2 \in \calI$, then $\ell_{I_1},\ell_{I_2}$ meet in at most one point of $\bfp$, unless $\ell_{I_1} = \ell_{I_2}$. If $\ell_{I_1} = \ell_{I_2}$, then any three points of $\bfp$ lying on such line is indexed by a triple in $\calI$. This shows that $\calQ$ must satisfy the second weak criterion. The proof for the third criterion is similar.
\end{proof}

Motivated by our goal of computing all quatroids, and inspired by B\'ezout's weak criteria, we introduce the following operation on pairs. 
\begin{definition}
Let $\calI$ be a set of triples of $\{1,2,\ldots,8\}$. Define $\mydef{\overline{\calI}}$ to be the smallest set of triples containing $\calI$ which satisfies B\'ezout's second weak criterion. 
Similarly, for a set $\calJ$ of sextuples of $\{1,2,\ldots,8\}$, define $\mydef{\overline{\calJ}}$ to be the smallest set of sextuples containing $\calJ$ which satisfies B\'ezout's third weak criterion.
For a pair $\mathcal Q=(\calI,\calJ)$, define $\mydef{\overline{\mathcal Q}}=(\overline{\calI},\overline{\calJ})$. 
\end{definition}

\begin{lemma}
The sets $\overline \calI$ and $\overline \calJ$ are well-defined, and thus so is $\overline \calQ$.
\end{lemma}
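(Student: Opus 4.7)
The plan is to show well-definedness by the standard closure-system argument: one exhibits a nonempty family of candidate sets satisfying the criterion, shows this family is closed under intersection, and defines $\overline{\calI}$ (respectively $\overline{\calJ}$) as the intersection of all its members.

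First, I would fix notation and set $\mathfrak{F}_{\calI} = \{ \calI' : \calI \subseteq \calI' \subseteq \binom{\{1,\dots,8\}}{3}, \ \calI' \text{ satisfies B\'ezout's second weak criterion}\}$. This family is nonempty because $\binom{\{1,\dots,8\}}{3}$ itself lies in it: if every triple is included, then a fortiori every $3$-subset of any $I_1 \cup I_2$ is included. Next I would verify that $\mathfrak{F}_{\calI}$ is closed under arbitrary intersections. If $\{\calI'_\alpha\}_{\alpha} \subseteq \mathfrak{F}_{\calI}$ and $I_1, I_2 \in \bigcap_\alpha \calI'_\alpha$ with $|I_1 \cap I_2| > 1$, then for each $\alpha$ the set $\calI'_\alpha$ contains every $3$-subset of $I_1 \cup I_2$, hence so does $\bigcap_\alpha \calI'_\alpha$. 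Thus $\overline{\calI} := \bigcap_{\calI' \in \mathfrak{F}_{\calI}} \calI'$ lies in $\mathfrak{F}_{\calI}$ and is contained in every element of $\mathfrak{F}_{\calI}$, which is precisely the claim that it is the smallest such set.

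The argument for $\overline{\calJ}$ is identical, replacing triples by sextuples and the number $1$ (respectively $3$) with $4$ (respectively $6$): the full set $\binom{\{1,\dots,8\}}{6}$ witnesses nonemptiness of the corresponding family, and the same pointwise verification shows closure under intersection. Well-definedness of $\overline{\calQ} = (\overline{\calI}, \overline{\calJ})$ is then immediate from the well-definedness of its two coordinates.

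I expect no substantive obstacle: the argument is the standard Moore-closure pattern. The only minor subtlety worth remarking on is that B\'ezout's second and third weak criteria, as formulated, are \emph{closure-type} conditions ("if \dots then all \dots lie in $\calI$") rather than prohibitions, which is exactly the shape needed for the intersection of satisfying sets to again satisfy them; if one preferred, one could equivalently construct $\overline{\calI}$ as the ascending union of the finite chain obtained by iteratively adjoining $3$-subsets of $I_1 \cup I_2$ whenever $|I_1 \cap I_2| = 2$, a process that terminates because $\binom{\{1,\dots,8\}}{3}$ is finite.
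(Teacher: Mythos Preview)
Your proof is correct and follows essentially the same approach as the paper: exhibit the full set of triples (respectively sextuples) to show the family of candidate supersets is nonempty, then verify closure under intersection so that the intersection of all candidates is the desired smallest set. Your write-up is more explicit (naming the family, checking arbitrary rather than pairwise intersections, and noting the alternative iterative construction), but the underlying argument is identical.
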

\begin{proof}
Let $\calI$ be a set of triples of $\{1,2,\ldots,8\}$. The definition of $\overline{\calI}$ is the smallest set of triples containing $\calI$ which satisfies B\'ezout's second weak criterion. This set is not empty since the set of \emph{all} triples of $\{1,2,\ldots,8\}$ contains $\calI$ and satisfies the second criterion. Moreover, if $\calI_1$ and $\calI_2$ both satisfy B\'ezout's second weak criterion, then so does their intersection, proving that the smallest such set of triples is well-defined. The argument for sextuples is exactly the same. 
\end{proof}

\begin{example}
Consider the pair $\mathcal Q=(\{123,124,345\},\{245678\})$. Then
\[
\overline{\mathcal Q} = (\{123,124,125,134,135,145,234,235,245,345\},\{245678\}).
\]
Note that $\mathcal Q$ satisfies the first and third weak criteria, but not the second. On the other hand $\overline{\mathcal Q}$ satisfies the second and third weak criteria, but not the first.
\end{example}

When a configuration $\bfp$ is contained in an irreducible cubic, B\'ezout's Theorem applied to such a cubic restricts which lines and conics pass through subsets of $\bfp$. This motivates the following definition.
\begin{definition}
\label{def:Bezoutian}
A pair $\mathcal Q=(\mathcal I,\mathcal J)$ satisfies \mydef{B\'ezout's strong criteria} if 
\begin{enumerate}
\item ``An irreducible cubic meets a line in at most three points'':
$$I_1,I_2 \in \mathcal I \implies |I_1 \cap I_2|\leq 1.$$
\item ``An irreducible cubic meets a conic in at most six points'':
$$J_1,J_2 \in \mathcal J \implies |J_1 \cap J_2| \leq 4.$$
\end{enumerate}
Pairs which satisfy both B\'ezout's weak and strong criteria are called \mydef{B\'ezoutian}. 
\end{definition}

As with the weak criteria, the quotes suggest the proof of an important result.

\begin{lemma}
\label{lem:havingcubicsimpliesbezoutian}
A configuration $\p \in \mathcal P$ satisfies the conditions of  \autoref{prop:FibreIsFinite} if and only if $\p$ represents a B\'ezoutian quatroid. Hence, there are no rational cubics through a configuration $\p$ that represents a non-B\'ezoutian quatroid.
\end{lemma}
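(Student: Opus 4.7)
The plan is to prove the biconditional by identifying the condition ``no four points on a line and no seven points on a conic'' from part (3) of \autoref{prop:FibreIsFinite} with the conjunction of the weak and strong Bézout criteria. The weak direction is already handled by \autoref{lem:necessaryforquatroid}, so the work is to show that, given any configuration $\bfp$ of distinct points representing some pair $\calQ = (\calI,\calJ)$, the strong criteria are equivalent to the absence of four collinear points and of seven conconic points. Once this equivalence is in place, the ``hence'' clause will follow by combining it with \autoref{prop:FibreHas12Points}.

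First I would argue the forward direction. Assume $\bfp$ satisfies part (3) of \autoref{prop:FibreIsFinite}. By \autoref{lem:necessaryforquatroid}, the weak criteria are automatic, so only the strong criteria remain. Suppose toward a contradiction that there exist $I_1,I_2 \in \calI$ with $|I_1 \cap I_2| \geq 2$. Then the lines $\ell_{I_1}$ and $\ell_{I_2}$ share two points of $\bfp$, hence coincide, so the four (distinct) points indexed by $I_1 \cup I_2$ lie on a common line, contradicting (3). The same argument for two sextuples $J_1,J_2 \in \calJ$ with $|J_1 \cap J_2| \geq 5$ forces $q_{J_1} = q_{J_2}$ (by B\'ezout applied to two irreducible conics sharing more than $4$ points), producing seven conconic points, again contradicting (3).

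For the converse, assume $\bfp$ represents a B\'ezoutian quatroid $\calQ = (\calI,\calJ)$ but has four points $\{p_{i_1},p_{i_2},p_{i_3},p_{i_4}\}$ on a common line $\ell$. Then each of the $\binom{4}{3} = 4$ triples contained in $\{i_1,\ldots,i_4\}$ belongs to $\calI$, and any two of them share two indices, violating the first strong criterion. Analogously, if seven points of $\bfp$ lie on a conic, then each of the $\binom{7}{6} = 7$ sextuples of their indices belongs to $\calJ$, and any two such sextuples share five indices, violating the second strong criterion. This establishes the biconditional.

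The final assertion is then immediate: if $\bfp$ represents a non-B\'ezoutian quatroid, then by the biconditional $\bfp$ fails part (3) of \autoref{prop:FibreIsFinite}, and \autoref{prop:FibreHas12Points} gives $d_\bfp = 0$, so no rational cubics pass through $\bfp$. I do not anticipate a genuine obstacle here; the only subtlety is bookkeeping the conversion between ``$k$ points on a common curve'' and ``$\binom{k}{r}$ overlapping $r$-subsets in $\calI$ or $\calJ$,'' and being careful that the points in $\bfp$ are distinct so that coincident curves really do accumulate the right number of points.
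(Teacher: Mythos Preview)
Your proposal is correct and follows essentially the same approach as the paper's own proof: both directions reduce the biconditional to identifying condition~(3) of \autoref{prop:FibreIsFinite} with the strong criteria (the weak ones being automatic from \autoref{lem:necessaryforquatroid}), via the same ``two overlapping triples force a shared line'' and ``four collinear points force overlapping triples'' arguments, and likewise for sextuples. The final clause is deduced identically through \autoref{prop:FibreHas12Points}.
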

\begin{proof}
Let $\calQ$ denote the quatroid represented by $\bfp \in \mathcal P$.   By \autoref{lem:necessaryforquatroid}, $\calQ$ satisfies B\'ezout's weak criteria. Suppose $\p$ satisfies the conditions of  \autoref{prop:FibreIsFinite}. Then there is an irreducible cubic through $\p$ and so no four points of $\p$ lie on a line and no seven lie on a conic. As suggested by the quotations in \autoref{def:Bezoutian}, this implies the strong criteria are satisfied: if $| I_1 \cap I_2| > 1$ for some $I_1,I_2$ then at least four points of $\bfp$ lie on the line $\ell_{I_1} = \ell_{I_2}$ which is a contradiction. The proof for the second criterion is similar. 

Conversely, suppose $\p$ represents a B\'ezoutian quatroid $\calQ=(\calI,\calJ)$. Then no four points of $\p$ lie on a line and no seven lie on a conic and so $\p$ satisfies the conditions in \autoref{prop:FibreIsFinite}. To see this, suppose towards a contradiction that $p_1 \vvirg p_4$ lie on a line: then $123,124,134,234 \in \calI$ so B\'ezout's first strong criterion is violated. A similar conclusion holds if seven points lie on a conic.
\end{proof}

\subsection{Computing all candidate quatroids} 
We approach the task of computing all  quatroids by first computing a list $\mydef{\mathfrak Q}$ of \emph{candidate quatroids}. A pair $\mathcal Q$ is a \mydef{candidate quatroid} if it is matroidal and satisfies B\'ezout's weak criteria. By \autoref{lem:necessaryforquatroid}, all (representable) quatroids are in $\mathfrak Q$.
\begin{remark}
Our goal is to count rational cubics through $\bfp \in \calP$, so by \autoref{lem:havingcubicsimpliesbezoutian} it is enough to determine all B\'ezoutian quatroids. However, for completeness, we compute all quatroids.
\end{remark}
Since matroidal pairs have underlying representable matroids, we take advantage of existing databases of matroids to begin our computation. We use the database \cite{MatroidDB}, to compile a list $\mydef{\mathfrak M}$ of matroids representable by eight distinct points in $\mathbb{P}_{\mathbb{C}}^2$. These are the representable simple matroids of rank at most three. There are $67$ orbits of such matroids, under the action of $\mathfrak S_8$: $66$ of rank three and $1$ of rank two. In this work, we record matroids in terms of their nonbases of size three. The original list from \cite{MatroidDB} is in the auxiliary file \texttt{SimpleMatroids38.txt}.

We now give an algorithm which produces all candidate quatroids with a given underlying matroid by greedily extending its the conic conditions.
\begin{algorithm}[]
\caption{AllConicExtensions}\label{algo:allConicalExtensions}
\SetKwIF{If}{ElseIf}{Else}{if}{then}{elif}{else}{}%
\DontPrintSemicolon
\SetKwProg{All Conical Extensions}{All Conical Extensions}{}{}
\LinesNotNumbered
\KwIn{A pair $\mathcal Q=(\mathcal I, \emptyset)$ where ${\mathcal I} \in \mathfrak M$
 }
\KwOut{A list of all pairs of the form $\mathcal Q' = (\mathcal I,\mathcal J)$ which satisfy B\'ezout's weak criteria.} 
\nl \textbf{initialize }\texttt{PairsFound}$= \{ \mathcal Q \}$\;
\nl \textbf{initialize }\texttt{PairsToExtend}$= \{ \mathcal Q \}$\;
\nl (Compute the stabilizer $H$ of $\mathcal I$ in $\mathfrak S_8$)\;
\nl \While{\texttt{PairsToExtend }$\neq \emptyset$}{
\nl Choose $\mathcal Q' = (\mathcal I,\mathcal J)$ from \texttt{PairsToExtend} and delete $\mathcal Q'$ from $\texttt{PairsToExtend}$\;
\nl Set $\texttt{Q'\_Extensions} = \{\overline{(\mathcal I, \mathcal J \cup \{J\})}\}_{J \text{ a }6\text{-subset of }\{1,\ldots,8\}}$\;
\nl \For{$\mathcal Q'' \in \texttt{Q'\_Extensions}$}{
\nl \If{$\mathcal Q''$ satisfies B\'ezout's weak criteria}
{ 
\nl (Let $\mathcal Q^*$ be a canonical representative of the $H$-orbit of $\mathcal Q''$)\;
\nl \If{$\mathcal Q^* \not\in $\texttt{QuatroidsFound}}{
\nl \texttt{QuatroidsFound}$\leftarrow \mathcal Q^*$\;
\nl \texttt{PairsToExtend}$\leftarrow \mathcal Q^*$\;
}
}}
}
\nl \Return{} \texttt{PairsFound}\;
\end{algorithm} 
\renewcommand{\algocfautorefname}{Algorithm}
\begin{theorem}\label{thm:AlgorithmWorks}
The set of pairs produced by \autoref{algo:allConicalExtensions} on the input $(\calI,\emptyset)$ coincides with the set of candidate quatroids whose underlying matroid is $\calM_\calI$. Thus, applying \autoref{algo:allConicalExtensions} to every matroid in $\mathfrak M$ produces $\mathfrak Q$.
\end{theorem}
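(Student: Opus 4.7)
The plan is to establish the two set-theoretic inclusions separately. For soundness, I would observe that the initial pair $(\mathcal{I}, \emptyset)$ trivially satisfies all three of B\'ezout's weak criteria, and every subsequent pair added to \texttt{PairsFound} has just passed the explicit check in line 8. The first coordinate remains $\mathcal{I}$ throughout, because any simple representable matroid of rank at most three automatically satisfies B\'ezout's second weak criterion---four collinear points force all four of their 3-subsets to be nonbases---so $\overline{\mathcal{I}} = \mathcal{I}$ and the componentwise closure in line 6 never alters the first coordinate. Together with matroidality inherited from $\mathcal{I} \in \mathfrak{M}$, this shows that every output of the algorithm is a candidate quatroid with underlying matroid $\mathcal{M}_\mathcal{I}$.

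For the reverse inclusion, fix a candidate quatroid $\mathcal{Q} = (\mathcal{I}, \mathcal{J})$ with underlying matroid $\mathcal{M}_\mathcal{I}$ and pick any ordering $\mathcal{J} = \{J_1, \ldots, J_m\}$. Define $\mathcal{J}_0 = \emptyset$ and inductively $\mathcal{J}_i = \overline{\mathcal{J}_{i-1} \cup \{J_i\}}$. The key invariant is $\mathcal{J}_i \subseteq \mathcal{J}$ for every $i$: by induction $\mathcal{J}_{i-1} \cup \{J_i\} \subseteq \mathcal{J}$, and since $\mathcal{J}$ already satisfies the third weak criterion it coincides with its own closure, so the smallest superset closed under the third criterion stays inside $\mathcal{J}$. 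In particular $\mathcal{J}_m \supseteq \{J_1, \ldots, J_m\} = \mathcal{J}$ forces $\mathcal{J}_m = \mathcal{J}$. Each intermediate $(\mathcal{I}, \mathcal{J}_i)$ is itself a candidate quatroid: the first weak criterion is inherited from $\mathcal{J}_i \subseteq \mathcal{J}$, the second holds for $\mathcal{I}$ as above, and the third is enforced by closure. Since line 6 iterates over \emph{every} 6-subset $J$, in particular over $J = J_i$, the algorithm inserts $(\mathcal{I}, \mathcal{J}_i)$ into \texttt{PairsFound} (modulo the $H$-orbit identification in lines 9-10). Induction on $i$ then places $\mathcal{Q}$ itself in the output.

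The canonicalization in lines 9-10 stores one representative per orbit of the stabilizer $H$ of $\mathcal{I}$ in $\mathfrak{S}_8$; since any $\sigma \in \mathfrak{S}_8$ mapping $(\mathcal{I}, \mathcal{J}_1)$ to $(\mathcal{I}, \mathcal{J}_2)$ necessarily lies in $H$, this faithfully records each $\mathfrak{S}_8$-orbit of candidate quatroids extending $\mathcal{I}$. Running the algorithm on one representative from each $\mathfrak{S}_8$-orbit in $\mathfrak{M}$ and collecting the outputs thus recovers all of $\mathfrak{Q}$. The main obstacle lies entirely in the containment claim $\mathcal{J}_i \subseteq \mathcal{J}$: this is not immediate but follows from the set-theoretic definition of $\overline{\cdot}$ as the smallest closed superset, using that $\mathcal{J}$ is itself closed by hypothesis. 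Termination of the \texttt{while} loop is automatic, since only finitely many pairs $(\mathcal{I}, \mathcal{J}')$ with $\mathcal{J}'$ a set of 6-subsets of $\{1,\ldots,8\}$ exist, and \texttt{PairsToExtend} strictly decreases with each iteration that does not add new elements.
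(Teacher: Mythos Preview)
Your proof is correct and follows essentially the same idea as the paper's: both hinge on the observation that if $\mathcal{J}' \cup \{J\} \subseteq \mathcal{J}$ and $\mathcal{J}$ already satisfies the third weak criterion, then $\overline{\mathcal{J}' \cup \{J\}} \subseteq \mathcal{J}$, so the extension computed in line 6 stays inside the target and passes the check in line 8. The paper packages this as a contradiction argument (take a maximal found $\mathcal{J}' \subsetneq \mathcal{J}$ and extend once more), while you build an explicit ascending chain $\emptyset = \mathcal{J}_0 \subseteq \mathcal{J}_1 \subseteq \cdots \subseteq \mathcal{J}_m = \mathcal{J}$; these are the same induction read in opposite directions. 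Your treatment is in fact a bit more careful than the paper's on two points it leaves implicit: you explain why $\overline{\mathcal{I}} = \mathcal{I}$ for $\mathcal{I} \in \mathfrak{M}$ (so the closure in line 6 never perturbs the first coordinate), and you spell out soundness rather than declaring it clear.
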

\begin{proof}
Clearly all pairs in the output of \autoref{algo:allConicalExtensions} on the input $(\calI, \emptyset)$ are candidate quatroids whose underlying matroid is $\calM_\calI$. We need to show that all candidate quatroids arise in this way. We ignore the parenthetical steps of \autoref{algo:allConicalExtensions}, which are used only to speed up the process by working modulo the $\mathfrak S_8$-action.

By contradiction, assume $\mathcal Q=(\mathcal I, \mathcal J)$ is a candidate quatroid which is \emph{not} in the output of \autoref{algo:allConicalExtensions} applied to $(\mathcal I,\emptyset)$. Let $\mathcal Q'=(\mathcal I,\mathcal J')$ be a maximal candidate quatroid which is generated by \autoref{algo:allConicalExtensions} with $\mathcal J' \subsetneq \mathcal J$. Let $J \in \mathcal J \setminus \mathcal J'$. 

The first time $\calQ'$ is found in \autoref{algo:allConicalExtensions}, it is placed in \texttt{PairsToExtend} at \texttt{step 12}. Therefore, it is then chosen in \texttt{step 5} and the pair $ \calQ'' = (\calI, \bar{\calJ' \cup \{ J \}})$ is one of the elements of \texttt{Q'\_Extensions} at \texttt{step 6}. When $\calQ''$ is chosen in the \textbf{for} loop at \texttt{step 7}, it satisfies the \textbf{if} condition at \texttt{step 8}, because $\bar{\calJ' \cup \{ J \}} \subseteq \calJ$ guarantees $\calQ''$ satisfies B\'ezout's weak criteria. Hence, $\calQ''$ is a candidate quatroid and it is produced by \autoref{algo:allConicalExtensions}. This contradicts the maximality of $\calQ'$.
\end{proof}

\begin{theorem}
\label{thm:candidatequatroids}
The list $\mathfrak Q$ of candidate quatroids consists of $780617$ pairs. Up to the symmetry of $\mathfrak S_8$, these occur in $126$ distinct orbits. The numbers of orbits of each size are tallied below:

\begin{center}
\begin{tabular}{|c||c|c|c|c|c|c|c|c|c|c|}
\hline \hline 
Orbit Size & 1 & 8 & 28 & 35 & 56 & 70 & 105 & 168 & 210  & 280     \\ \hline 
\# Orbits  & 3 & 2 & 2 & 1 & 3 & 1 & 1 & 3 & 2 &  3    \\ \hline \hline 
Orbit Size & 420&  560 & 840 & 1680 & 2520 & 3360 & 5040 & 6720 & 10080 & 20160   \\ \hline
\# Orbits &  2 & 1 & 13 & 4 & 10 & 13 & 17 & 6 & 22 & 17     \\ \hline \hline 
\end{tabular}
\end{center} 
\end{theorem}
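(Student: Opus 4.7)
The proof is essentially computational: by \autoref{thm:AlgorithmWorks}, it suffices to run \autoref{algo:allConicalExtensions} on each matroid in $\mathfrak{M}$, collect the outputs, and tally the result. My plan is to (i) justify that this computation terminates and produces a well-defined finite list, (ii) describe how to carry out the $\mathfrak{S}_8$-orbit bookkeeping efficiently, and (iii) explain how the final tallies are read off.

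For step (i), note that each call to \autoref{algo:allConicalExtensions} is finite: the \texttt{while} loop can add to \texttt{PairsFound} only pairs $(\calI, \calJ)$ with $\calJ \subseteq \binom{\{1,\ldots,8\}}{6}$, and there are only $2^{28}$ such pairs for a fixed $\calI$. In practice, the B\'ezout weak-criteria pruning at step~8 and the $H$-orbit reduction at step~9 keep \texttt{PairsToExtend} very small. Since $\mathfrak{M}$ consists of only $67$ orbits (as read from \texttt{SimpleMatroids38.txt} after restricting to rank $\leq 3$ simple matroids), running \autoref{algo:allConicalExtensions} on one representative of each orbit and then acting by $\mathfrak{S}_8$ produces all of $\mathfrak{Q}$ by \autoref{thm:AlgorithmWorks}.

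For step (ii), to avoid enumerating orbits redundantly, I would work throughout with $\mathfrak{S}_8$-canonical forms: for each matroid orbit representative with stabilizer $H \leq \mathfrak{S}_8$, use $H$ to identify the $H$-orbit of each candidate $\mathcal{J}$, storing only one canonical representative per $H$-orbit. Since distinct matroid orbits yield distinct quatroid orbits (two quatroids with non-conjugate underlying matroids cannot be in the same $\mathfrak{S}_8$-orbit), the union over the $67$ matroids gives exactly one canonical form per $\mathfrak{S}_8$-orbit of candidate quatroids. The total count $780617$ is then the sum, over each representative $\mathcal{Q} = (\mathcal{I},\mathcal{J})$, of $|\mathfrak{S}_8|/|\mathrm{Stab}_{\mathfrak{S}_8}(\mathcal{Q})|$, where the stabilizer is computed as the subgroup of the matroid stabilizer $H$ fixing $\mathcal{J}$ setwise.

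For step (iii), after producing the $126$ canonical representatives, their orbit sizes are computed and sorted into the bins listed in the table; the tally of how many orbits fall in each bin is then read off directly. The main obstacle is not conceptual but organizational: it is the correctness and efficiency of the symmetry-reduced enumeration. Specifically, one must verify that the canonicalization map used in step~9 of \autoref{algo:allConicalExtensions} is deterministic on $H$-orbits so that duplicates are reliably pruned, and one must audit the implementation of the closure operation $\overline{(\,\cdot\,)}$ in step~6 to ensure it produces the minimal weak-criterion-satisfying extension. These are verified by cross-checking the final orbit-size multiset against the divisibility condition (each orbit size divides $|\mathfrak{S}_8| = 40320$) and by confirming that the $780617$ pairs sum correctly from the tabulated orbit sizes, i.e.\ $\sum_{\text{orbits}} (\text{orbit size}) = 780617$. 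The actual execution is deferred to the auxiliary files on the accompanying repository.
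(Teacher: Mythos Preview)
Your proposal is correct and follows essentially the same approach as the paper: invoke \autoref{thm:AlgorithmWorks} to reduce the claim to running \autoref{algo:allConicalExtensions} on each matroid in $\mathfrak{M}$, manage the $\mathfrak{S}_8$-symmetry via stabilizer computations, and read off the orbit counts from the output. The paper's proof is terser (it simply points to the functions \texttt{GenerateAllCandidateQuatroids()} and \texttt{OrbitSizes()} in the auxiliary package and to \texttt{GAP}/\texttt{OSCAR} for the group-theoretic bookkeeping), but your added remarks on termination, on why distinct matroid orbits give distinct quatroid orbits, and on the arithmetic sanity checks are all sound and compatible with what the paper does.
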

\begin{proof}
The candidate quatroids are computed by calling  \autoref{algo:allConicalExtensions} on all representable matroids $\mathfrak M$. This computation may be performed by calling the function \texttt{AllConicExtensions(M)} in the \texttt{Quatroids.jl} package (see \autoref{tab:software} for details). All candidate quatroids are produced by the function \texttt{GenerateAllCandidateQuatroids()} and the results are automatically stored in a text file. The function \texttt{OrbitSizes()} then produces a text file listing the corresponding orbit sizes. These functions use features from \texttt{GAP} \cite{GAP} and \texttt{OSCAR} \cite{OSCAR} to work modulo the $\mathfrak S_8$-symmetry.
\end{proof}

\subsection{Representability of Quatroids}
A candidate quatroid $\calQ \in \frakQ$ is a quatroid if and only if it is representable. In order to compile a list of quatroids, for every candidate quatroid $\calQ$, we either exhibit an element $\bfp \in (\bbP_{\mathbb{C}}^2)^8$ representing it, or we prove that the realization space $\calS_\calQ$ is empty.
\begin{theorem}\label{thm:representability}
Every $\calQ \in \frakQ$ is representable over $\bbQ$ except for the ones in the $\frakS_8$-orbits of 
\begin{align*}
\mathcal Q_{41} &= (\{123,145,167,246,258,357,368,478\},\{\}), \\
\mathcal Q_{63} &= (\{123,145,246,356\},\{125678,134678,234578\}).
\end{align*}
Those in the orbit of $\mathcal Q_{41}$ are representable over $\mathbb{C}$ but not over $\mathbb{R}$. Those in the orbit of $\mathcal Q_{63}$ are not representable over $\mathbb{C}$.
\end{theorem}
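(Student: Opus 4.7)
The argument splits along the three assertions. For each of the $124$ candidate quatroid orbits not in the orbit of $\mathcal{Q}_{41}$ or $\mathcal{Q}_{63}$, the plan is to exhibit an explicit rational configuration $\bfp \in (\mathbb{P}^2_\mathbb{Q})^8$ and verify that the associated quatroid is the claimed one. The verification is purely determinantal: the triple $\{i,j,k\}$ is collinear iff the $3\times 3$ determinant with columns $p_i, p_j, p_k$ vanishes, and the sextuple $\{i_1, \dots, i_6\}$ lies on a conic iff the $6\times 6$ matrix whose rows are the degree-$2$ monomial evaluations of $p_{i_s}$ is singular. For each representative one checks \emph{both} directions: every triple in $\mathcal{I}$ and every sextuple in $\mathcal{J}$ is witnessed, and no unprescribed incidences arise. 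The explicit rational coordinates are recorded in the auxiliary file \texttt{RationalRepresentatives.txt}.

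For $\mathcal{Q}_{41}$, observe that each label appears in exactly three triples and any two triples share at most one element, so the incidence pattern is that of the classical M\"obius--Kantor $(8_3)$ configuration. Its realizability over $\mathbb{C}$ but not over $\mathbb{R}$ is a classical fact. The plan is to exhibit a $\mathbb{C}$-representative built from a primitive cube root of unity and verify the incidences by the same determinantal check as above. For non-realizability over $\mathbb{R}$ we follow the standard $\PGL_3$-normalization argument: send four of the points to a projective frame, solve iteratively for the remaining four using the collinearity constraints, and observe that the final compatibility condition reduces to a polynomial equation such as $t^2+t+1=0$ with no real roots.

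The substantive case is $\mathcal{Q}_{63}$. The four collinear triples $\{123, 145, 246, 356\}$ realize the incidence pattern of a complete quadrilateral, so $p_1, \dots, p_6$ are its six vertices; using $\PGL_3$ we normalize the four lines to $x=0$, $y=0$, $z=0$, and $x+y+z=0$, determining $p_1,\dots,p_6$ uniquely. A direct computation shows that the pencils of conics through the four-point base-sets associated with the three sextuples in $\mathcal{J}$ are
\[
\lambda\, x(x+y+z) + \mu\, yz,\qquad \lambda\, y(x+y+z) + \mu\, xz,\qquad \lambda\, z(x+y+z) + \mu\, xy.
\]
Requiring $p_7 = [a:b:c]$ with $abc(a+b+c)\neq 0$ (else $p_7$ lies on one of the four lines) to lie on each pencil pins down $\mu/\lambda$ in terms of $a,b,c$, and imposing the same three equations on $p_8 = [a':b':c']$ yields $(a/b)^2 = (a'/b')^2$ together with the two analogous identities, forcing $p_8$ to differ from $p_7$ by a sign change in exactly one coordinate. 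Substituting each of the three sign-flip cases back into the conic equations produces an auxiliary constraint---for example the ``flip $a$'' case forces $b+c=0$---which in turn places both $p_7$ and $p_8$ on one of the three lines $y+z=0$, $x+z=0$, $x+y=0$. Each such line already contains two of $p_1,\dots,p_6$ (for instance $p_3$ and $p_4$ lie on $y+z=0$), so the configuration acquires an extra collinear triple such as $\{3,4,7\}$ not in $\mathcal{I}$. Hence no configuration over a field of characteristic zero represents $\mathcal{Q}_{63}$, ruling out realizability over $\mathbb{C}$. The main obstacle is the careful case analysis of the three sign-flip subcases, along with confirming that all degenerate situations (a coordinate vanishing, $a+b+c=0$, or $p_7=p_8$) either reduce to one of these or place $p_7$ onto a prescribed line and are thus already excluded.
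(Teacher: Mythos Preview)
Your treatment of the $124$ rational realizations and of $\mathcal Q_{41}$ matches the paper's exactly: explicit representatives verified by determinants, and for $\mathcal Q_{41}$ the standard $\PGL_3$-normalization leading to a real-root-free quadratic.

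For $\mathcal Q_{63}$ you take a genuinely different route. The paper argues conceptually via Cayley--Bacharach: since $\mathcal Q_{63}$ is B\'ezoutian, exhaustive, and not below any $\mathcal Q_{10}$, the base locus $Z(L_\bfp)$ is reduced with a ninth point $p_9$; by \autoref{cor:ConicLineSwap} each conic condition $J\in\mathcal J$ forces the complementary triple $J^c\cup\{9\}$ to be collinear, and the resulting matroid on $\{1,\dots,6,9\}$ is the Fano plane, which is not $\bbC$-representable. Your argument is instead a direct elimination: normalize the complete quadrilateral, write the three conic pencils explicitly, and show by a sign-flip case analysis that $p_7$ and $p_8$ are forced onto one of the three diagonals of the quadrilateral, producing an unprescribed collinear triple. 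Both are correct; yours is more elementary and self-contained (no Cayley--Bacharach machinery), while the paper's explains \emph{why} the obstruction is exactly Fano and fits into the broader framework of reduced base loci used elsewhere in the paper. It is worth noting that the two arguments are secretly the same geometry: the three diagonals $\overline{p_1p_6},\overline{p_2p_5},\overline{p_3p_4}$ you land on are precisely the lines through the putative Cayley--Bacharach point $p_9$, and their failure to be concurrent in characteristic $\neq 2$ is the Fano obstruction. One small caution: your example ``$p_3$ and $p_4$ lie on $y+z=0$'' depends on a specific assignment of the four lines to $x,y,z,x{+}y{+}z$ that you do not spell out; with the most natural assignment the pair on $y+z=0$ is a different opposite pair, so when you write up the case analysis make the labeling explicit.
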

\begin{proof}
An explicit $\mathbb{Q}$-realization is given in the auxiliary files for each of the $124$ orbits different from the ones of $\calQ_{41}$ and $\calQ_{63}$. A \texttt{Macaulay2} \cite{M2} script verifying that they are representatives is provided as well. The claims regarding $\calQ_{41}$ and $\calQ_{63}$ are proved in \autoref{lem:Q41} and \autoref{lem:Q63}.
\end{proof}
The following corollary combines \autoref{thm:candidatequatroids} and \autoref{thm:representability}.
\begin{corollary}
\label{cor:orbitcorollary}
Of the $780617$ pairs ($126$ orbits) in $\mathfrak Q$, exactly $779777$ ($125$ orbits) are representable over $\mathbb{C}$ and $778937$ ($124$ orbits) are representable over $\mathbb{R}$ and $\mathbb{Q}$. In particular, the stratification $\mathcal S$ of $\mathcal P$ consists of $779777$ strata, $544748$ ($76$ orbits) of which are B\'ezoutain. 
\end{corollary}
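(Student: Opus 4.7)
The plan is to deduce this corollary by combining the counts from \autoref{thm:candidatequatroids} with the representability results of \autoref{thm:representability}, and then performing a direct combinatorial check for the B\'ezoutian subcount. First I would extract from \autoref{thm:candidatequatroids} the total $|\mathfrak{Q}| = 780617$ distributed among $126$ orbits. \autoref{thm:representability} singles out the orbits of $\mathcal{Q}_{41}$ and $\mathcal{Q}_{63}$ as the only obstructions to representability: the orbit of $\mathcal{Q}_{63}$ contains no element representable over $\mathbb{C}$, while the orbit of $\mathcal{Q}_{41}$ contains elements representable over $\mathbb{C}$ but none over $\mathbb{R}$; all other $124$ orbits admit $\mathbb{Q}$-representatives.

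Next I would compute the sizes of these two distinguished orbits by the orbit--stabilizer theorem, invoking the same \texttt{GAP}/\texttt{OSCAR} routines already used to compile the orbit data in \autoref{thm:candidatequatroids}; in both cases the stabilizer in $\mathfrak{S}_8$ has order $48$, so each orbit has size $840$. The stated counts then follow by subtraction: $780617 - 840 = 779777$ for $\mathbb{C}$-representable quatroids ($125$ orbits), and a further $779777 - 840 = 778937$ for $\mathbb{R}$- (equivalently $\mathbb{Q}$-) representable ones ($124$ orbits). Since $\mathcal{S}_{\mathcal{Q}}$ is nonempty precisely when $\mathcal{Q}$ is $\mathbb{C}$-representable, the stratification $\mathcal{S}$ of $\mathcal{P}$ consists of exactly $779777$ strata partitioned into $125$ orbits.

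For the B\'ezoutian count, I would traverse the $125$ representable quatroid orbits stored in the auxiliary files and test \autoref{def:Bezoutian} directly on each orbit representative: retain only those for which $|I_1 \cap I_2| \leq 1$ for all $I_1, I_2 \in \mathcal{I}$ and $|J_1 \cap J_2| \leq 4$ for all $J_1, J_2 \in \mathcal{J}$. Summing the orbit sizes of the $76$ surviving orbits produces $544748$. The only step carrying any conceptual content is the orbit-size calculation for the two non-$\mathbb{R}$-representable orbits; this follows mechanically from the symmetry routines already invoked in \autoref{thm:candidatequatroids}, so the main obstacle is really just confirming that the bookkeeping is aggregated correctly from the tabulated data in the appendix.
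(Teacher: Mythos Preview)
Your proposal is correct and follows essentially the same approach as the paper, which simply states that the corollary combines \autoref{thm:candidatequatroids} and \autoref{thm:representability}. You have merely made explicit the arithmetic (subtracting the two orbits of size $840$) and the computational check of the B\'ezoutian condition that the paper delegates to the auxiliary function \texttt{Bezoutian()} and the tabulated orbit sizes in the appendix.
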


To complete the proof of \autoref{thm:representability}, we show that $\calQ_{41}$ is representable over $\bbC$ and not over $\bbR$, and that $\calQ_{63}$ is not representable. To analyze $\calS_{\calQ_{41}}$ we extend a standard matroid procedure to put coordinates on the realization space of a quatroid.

We represent an element $\bfp \in (\bbP_{\mathbb{C}}^2)^8$ as a $3 \times 8$ matrix, whose columns correspond to elements of $\bbP_{\mathbb{C}}^2$. We are free to work modulo the action of $\PGL_2$ on $\bbP_{\mathbb{C}}^2$; it is convenient to normalize either some of the points of $\bfp$ or some of the conic conditions of $\calQ$, leaving some free parameters $\mydef{z_1,\ldots,z_r}$. These parameters are free to vary in a quasi-projective variety $\mydef{S_{\mathcal Q}} \subseteq \bbC^r$ described by equations and inequations. The equations are determinantal relations imposed by $\mathcal Q$, some of which are identically satisfied after the normalization. The inequations are the determinantal relations \emph{not} imposed by $\mathcal Q$. The realization space $\calS_\calQ$ is a bundle over $S_\calQ$. In particular, $\calQ$ is representable if and only if $S_\calQ$ is nonempty and $\calS_\calQ$ is irreducible if and only if $S_\calQ$ is. 

Candidate quatroid $\mathcal Q_{41}$ is a well-known matroid called the \mydef{MacLane matroid} (see, e.g., \cite{Ingleton,Ziegler:Orientable}) which is realized by a \mydef{M\"obius-Kantor configuration} \cite[Section 2]{MR0038078}. As a matroid, it is non-orientable, not representable over $\mathbb{R}$, and its realization space is reducible. We give a standard proof regarding its representability. 

\begin{lemma}\label{lem:Q41}
The candidate quatroid $\mathcal Q_{41} = (\{123,145,167,246,258,357,368,478\},\emptyset)$ is representable over $\mathbb{C}$ but not $\mathbb{R}$. Its realization space is the union of two distinct orbits of $\PGL_2$.
\end{lemma}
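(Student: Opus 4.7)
The plan is to put explicit coordinates on the realization space of $\mathcal Q_{41}$ following the procedure outlined just before the lemma, and reduce the question to the solvability of a single quadratic equation.

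First I would fix a projective frame. The subset $\{1,2,4,7\} \subseteq \{1,\dots,8\}$ contains no triple from $\mathcal I = \{123,145,167,246,258,357,368,478\}$, so in any representation these four points are in general linear position. Using the $\PGL_2$-action on $\mathbb P_\C^2$, normalize them to $p_1=[1{:}0{:}0]$, $p_2=[0{:}1{:}0]$, $p_4=[0{:}0{:}1]$, $p_7=[1{:}1{:}1]$. Each of the remaining four points $p_3, p_5, p_6, p_8$ appears in exactly one triple of $\mathcal I$ whose other two members lie in $\{1,2,4,7\}$, which pins it to a known line and leaves one coordinate free: condition $123$ gives $p_3=[a{:}1{:}0]$, condition $145$ gives $p_5=[b{:}0{:}1]$, condition $167$ gives $p_6=[c{:}1{:}1]$, and condition $478$ gives $p_8=[1{:}1{:}e]$.

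Next I would impose the four remaining collinearities $246$, $258$, $357$, $368$ as vanishing $3 \times 3$ determinants. A direct computation yields $c=0$, $be=1$, $a+b=1$, and $a(e-1)+1=0$, respectively. Eliminating $a$, $c$, and $e$ in favor of $b$ gives the single relation
\[
b^2 - b + 1 = 0.
\]
This quadratic has discriminant $-3$, so it has no real solutions, proving that $\mathcal Q_{41}$ is not representable over $\mathbb R$. Its two complex roots $b = (1 \pm i\sqrt 3)/2$ each determine a complete configuration in $(\mathbb P_\C^2)^8$. Because the normalization above rigidified the $\PGL_2$-action, these two solutions correspond to two distinct $\PGL_2$-orbits of configurations, so $S_{\mathcal Q_{41}}$ consists of exactly two points and $\mathcal S_{\mathcal Q_{41}}$ is a union of two $\PGL_2$-orbits.

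The final step is to verify that each of these two candidate configurations actually lies in the stratum $\mathcal S_{\mathcal Q_{41}}$, i.e.\ that no triple outside $\mathcal I$ is collinear and no sextuple lies on a conic. This amounts to a finite number of determinantal non-vanishing checks; the computations are simplified by the identity $b^2=b-1$ (equivalently, $b$ is a primitive sixth root of unity), which reduces every polynomial in $b$ to a linear expression. I expect this bookkeeping to be the main obstacle, since there are $\binom{8}{3}-8 = 48$ triple-determinants and $\binom{8}{6}=28$ conic-determinants to rule out; however each is a small symbolic computation and they can be discharged in a single pass (and, as noted, verified independently in \texttt{Macaulay2}).
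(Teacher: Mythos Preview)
Your proposal is correct and follows essentially the same approach as the paper's proof: normalize $p_1,p_2,p_4,p_7$ to a standard projective frame, parametrize the remaining four points along lines determined by four of the triples in $\mathcal I$, and impose the remaining four collinearities to reduce to the quadratic $b^2 - b + 1 = 0$. The only cosmetic difference is that the paper uses the triples $123,145,246,258$ for the initial parametrization and $167,357,368,478$ as constraints, whereas you use $123,145,167,478$ for parametrization and $246,258,357,368$ as constraints; both routes land on the same quadratic, and the paper likewise dispatches the inequation check with a one-line remark rather than an exhaustive list.
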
 
\begin{proof}
The four points $p_1,p_2,p_4,p_7$ are in general linear position, in the sense that no subset of three of them lie on a line. Therefore, we may normalize them with the action of $\PGL_2$ to be the four points $[e_1],[e_2],[e_3]$ and $[e_1+e_2+e_3]$ in $\bbP_{\mathbb{C}}^2$. Using the fact that $123 \in I$, we deduce $p_3 = [1:z_1:0]$ for some $z_1$ and similarly, we write $p_5,p_6,$ and $p_8$ using the linear relations $145,246,258 \in I$, respectively, and in this order. We obtain the representation of $\bfp$ as the matrix
\[A_{41}=
\begin{bmatrix}
1 & 0 & 1   & 0 & 1   & 0  & 1      & 1\\
0 & 1 & z_1 & 0 & 0   & 1  & 1    & z_4\\
0 & 0 & 0   & 1 & z_2 & z_3& 1 & z_2
\end{bmatrix}
\]
subject to the determinantal equations imposed by $167,357,368,478 \in I$. Conditions $167$ and $478$ imply that $z_3=1=z_4$. Condition $368$ implies that $z_1=1-z_2$. Finally, $357$ implies that the last remaining parameter, $z_2$, satisfies the univariate quadratic equation $-z_2^2+z_2=1$, which has two distinct non-real solutions. For either solution, the inequations hold. Hence the set $S_{\calQ_{41}}$ consists of two points. Since the two points are inequivalent for the action of $\PGL_2$, the two corresponding $\PGL_2$-orbits give rise to two disjoint irreducible components of $\mathcal S_{\mathcal Q_{41}}$.
\end{proof}

We now change focus to the non-representability of $\calQ_{63}$. Our proof relies on the following property: for any realization $\bfp$ of $\calQ_{63}$, the base locus $Z(L_\bfp)$ is reduced. Hence, the matroid underlying the base locus is determined by $\calQ_{63}$ (see \autoref{lem:ReducedBaseLociConditions}) and if such a matroid is non-representable, then $\calQ_{63}$ is non-representable as well. 

\emph{A priori} the Cayley--Bacharach point of a configuration $\p$ of cubics may coincide with one of the eight points in $\p$. In this case the variety underlying the base locus $Z(L_\bfp)$ consists of eight points, but as a scheme, it contains a nonreduced component of degree two supported at the Cayley--Bacharach point. See, for example, \autoref{fig:Nonreduced}. 
\begin{figure}[!htpb]
\includegraphics[scale=0.3]{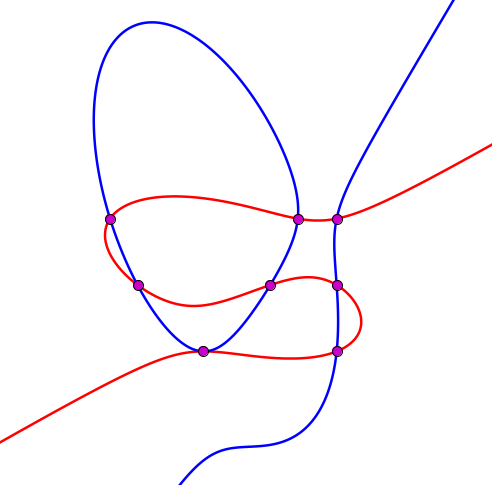}
\caption{A configuration $\p \in \mathcal P$ and two cubics $C_1,C_2 \in L_{\p}$. The base locus $Z(L_{\p})$ is nonreduced, witnessed by the fact that $C_1$ and $C_2$ are tangent at one of the eight points. }
\label{fig:Nonreduced}
\end{figure}

\begin{lemma}
\label{lem:ReducedBaseLociConditions}
Let $\calQ = (\calI,\calJ)$ be a B\'ezoutian candidate quatroid and let $\p \in \mathcal S_{\mathcal Q}$. If $\mathcal Q \not\leq \mathcal Q' $ for every $\mathcal Q'$ in the orbit of $\mathcal Q_{10}$, then the Cayley--Bacharach point of $L_{\p}$ cannot coincide with any point $p_i$ involved in a line condition $I \in \mathcal I$ or conic condition $J \in \mathcal J$.
\end{lemma}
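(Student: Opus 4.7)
The plan is to prove the contrapositive: assume $p_9 = p_i$ for some index $i$ appearing in a line condition $I \in \calI$ or a conic condition $J \in \calJ$, and construct a pair $\calQ^*$ in the $\frakS_8$-orbit of $\calQ_{10}$ with $\calQ \leq \calQ^*$. The key observation is that the collision $p_9 = p_i$ means the base locus $Z(L_\bfp)$ acquires a length-$2$ fat point at $p_i$ with a distinguished tangent direction $T$; every smooth member of the pencil $L_\bfp$ must therefore be tangent to $T$ at $p_i$, while members singular at $p_i$ are automatically compatible with the fat point.

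I would first treat the line case $i \in I = \{i,j,k\} \in \calI$. By \autoref{lem:reducibleCubicsThroughp} the cubic $C_1 = \ell_I \cdot q_{I^c}$ lies in $L_\bfp$, and I split on whether $p_i \in q_{I^c}$. If $p_i \notin q_{I^c}$, then $C_1$ is smooth at $p_i$ with tangent $\ell_I$, so $T = \ell_I$; consequently $\ell_I$ meets $Z(L_\bfp)$ with length at least $2+1+1=4$, and B\'ezout forces $\ell_I$ to be a component of every cubic in $L_\bfp$. The residual quadric of any such cubic then passes through the five points of $I^c$, and since the B\'ezoutian hypothesis rules out four collinear points, this quadric is uniquely determined, collapsing $L_\bfp$ to a single cubic and contradicting \autoref{prop:FibreIsFinite}. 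If instead $p_i \in q_{I^c}$ and $q_{I^c}$ is irreducible, then the six points $\{i\} \cup I^c$ lie on an irreducible conic, so $\{i\} \cup I^c \in \calJ$ and $\calQ \leq (\{I\}, \{\{i\} \cup I^c\})$ via condition (a) of \autoref{rem:OrderIsCombinatorial}. If $q_{I^c}$ is reducible, then B\'ezoutian-ness (no four collinear) forces the decomposition $q_{I^c} = \ell_{I'} \cup \ell_S$ with $I' \subset I^c$ a triple (so $I' \in \calI$) and $S \subset I^c$ a pair, and $p_i \in q_{I^c}$ combined with no four collinear forces $p_i \in \ell_S$, producing an additional triple $\{i\} \cup S \in \calI$. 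The disjoint union $J^* := I' \cup (\{i\} \cup S) \in \calI^2$ satisfies $I \cap J^* = \{i\}$ and $I \cup J^* = \{1,\ldots,8\}$, giving $\calQ \leq (\{I\}, \{J^*\})$ via condition (b).

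The conic case $i \in J \in \calJ$ is entirely symmetric and is handled using $C_1 := q_J \cdot \ell_{J^c}$ with $J^c = \{a,b\}$. If $p_i \notin \ell_{J^c}$, then $C_1$ is smooth at $p_i$ with tangent $T_{p_i} q_J$, so $T = T_{p_i} q_J$ and $q_J$ meets $Z(L_\bfp)$ in length at least $2 + 5 = 7$ (contact of order two at $p_i$ plus the other five points of $J$), exceeding the B\'ezout bound of $6$; hence $q_J$ is a component of every cubic in $L_\bfp$, its residual line is forced to be $\ell_{J^c}$, and $L_\bfp$ is again zero-dimensional, a contradiction. If $p_i \in \ell_{J^c}$, then $\{i\} \cup J^c \in \calI$, and the pair $(\{i\} \cup J^c, J)$ realizes the desired $\calQ_{10}$-type substructure via condition (a).

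The main obstacle will be the reducible-$q_{I^c}$ subcase of the line analysis: one must carefully verify that the extra collinearities produced by the reducible factorization really belong to $\calI$, that the no-four-collinear constraint from B\'ezoutian-ness selects the correct line component of $q_{I^c}$ containing $p_i$, and that condition (b) of \autoref{rem:OrderIsCombinatorial} is applied to a genuinely disjoint pair of triples. A secondary subtlety is that $C_1$ may itself be singular at $p_i$ (in the Case~A branches), imposing no tangent constraint on $T$; the contradictions in Case~B must therefore be extracted via the presence of smooth cubics in $L_\bfp$, which is guaranteed by the B\'ezoutian hypothesis through \autoref{prop:FibreIsFinite}.
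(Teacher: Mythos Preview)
Your proof is correct and follows essentially the same contrapositive strategy as the paper: in both, one examines the reducible cubic $C_1=\ell_I\cdot q_{I^c}$ (resp.\ $q_J\cdot\ell_{J^c}$) and uses the length-$2$ scheme at $p_i$ together with B\'ezout to force $p_i\in q_{I^c}$ (resp.\ $p_i\in\ell_{J^c}$), then reads off a $\calQ_{10}$-type substructure. The organizational differences are minor: the paper treats the conic case first and, in the line case, preemptively assumes $i$ lies in no sextuple of $\calJ$ so that only the reducible-$q_{I^c}$ branch survives (landing in $\calQ_{77}\leq\calQ_{10}$), whereas you handle both the irreducible and reducible $q_{I^c}$ branches directly and invoke condition~(b) of \autoref{rem:OrderIsCombinatorial} via $\calI^2$; and for the ``impossible'' subcases the paper intersects with an irreducible cubic in $L_\bfp$ while you argue the pencil collapses---both reach the same contradiction.
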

\begin{proof}
Without loss of generality, assume $\mathcal Q'=\mathcal Q_{10}$. Since $\mathcal Q$ is B\'ezoutian, the base locus $Z(L_{\p})$ is $0$-dimensional by \autoref{prop:FibreIsFinite} and \autoref{lem:havingcubicsimpliesbezoutian}. Let $p_9$ be the Cayley--Bacharach point of $\bfp$. By \autoref{rem:OrderIsCombinatorial}, we have that $\calQ_{77} \leq \calQ_{10}$, and so we prove that if $p_9=p_i$ for some $p_i$ involved in a triple or sextuple of $\calQ$, then either $\calQ \leq \calQ_{77}$ or $\calQ \leq \calQ_{10}$. 

Suppose  $p_9$ coincides with $p_i$ for some $i$ involved in a conic condition $J \in \mathcal J$. Without loss of generality, $i = 1$ and $J=123456$. Then $C = q_J \ell_{78}$ is a reducible cubic containing $\bfp$, hence $C \in L_\bfp$. By B\'ezout's Theorem, the conic $q_{J}$ cannot contain the five points $p_2,\ldots,p_6$ and a scheme of length two supported on $p_1$: if this was the case, the intersection of $q_J$ with an irreducible cubic in $L_\bfp$ would have degree (at least) $7$. Hence, $\ell_{78}$ contains $p_1$. Therefore $\mathcal Q \leq \mathcal Q_{10}$. 

Now, suppose that $p_9$ coincides with $p_i$ for some $i$ in a line condition $I \in \mathcal I$. Suppose $i \notin J$ for any $J \in \mathcal J$; if this is not the case, then $\calQ \leq \calQ_{10}$ as in the previous case. Without loss of generality $i =1$ and $I=123$. Then $C = \ell_{123}q_{45678}$ is a reducible cubic containing $\bfp$, hence $C \in L_\bfp$. Similarly to the previous case, by B\'ezout's Theorem, the line $\ell_{123}$ does not contain the subscheme of $Z(L_{\p})$ of length two supported at $p_1$. Hence, the quadric $q_{45678}$ contains $p_1$. Since $\mathcal Q$ does not contain a conic condition involving $i=1$, the quadric $q_{145678}$ must be the union of two lines. Up to reordering the points, those two lines are $\ell_{145}$ and $\ell_{678}$ implying that $\mathcal Q \leq \mathcal Q_{77}$. 
\end{proof}

\begin{corollary}\label{cor:ConicLineSwap}
If the base locus $p_1,\ldots,p_9$ of $L_{\p}$ is a reduced $0$-dimensional scheme, then for any sextuple $J \subset \{1,2,\ldots,9\}$, the points $\{p_j\}_{j \in J}$ lie on a quadric if and only if $\{p_i\}_{i \not\in J}$ lie on a line. 
\end{corollary}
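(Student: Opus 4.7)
The plan is to exploit that under the hypothesis, $Z(L_\bfp) = \{p_1,\ldots,p_9\}$ is a reduced $0$-dimensional scheme of degree $9$, hence the complete intersection of any two linearly independent cubics in the pencil $L_\bfp$. I would treat the two implications separately, each via a direct B\'ezout argument (avoiding an appeal to a general Cayley--Bacharach statement, although that would give a unified proof at once).

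The direction ``three collinear $\Rightarrow$ six on a quadric'' is the easier one. Suppose $\{p_i\}_{i \notin J}$ lie on a line $\ell$. For any $C \in L_\bfp$, B\'ezout gives $C \cdot \ell = 3$, but $C$ already vanishes at three points of $\ell$, so $\ell \subseteq C$. Writing $C = \ell \cdot q$, the residual quadric $q$ must vanish on the remaining six points $\{p_j\}_{j \in J}$.

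For the reverse direction, assume $\{p_j\}_{j \in J}$ lie on a quadric $q$, and split into cases. If $q$ is irreducible, I would restrict the pencil $L_\bfp$ to $q \cong \bbP_{\mathbb{C}}^1$, obtaining a linear map $L_\bfp \to H^0(q, \mathcal{O}_q(3)) = H^0(\bbP_{\mathbb{C}}^1, \mathcal{O}(6))$ whose image lands in the $1$-dimensional subspace of sections vanishing at the six prescribed points of $q$. The map cannot be identically zero: otherwise $q$ would lie in the base locus of $L_\bfp$, contradicting $0$-dimensionality. Hence the kernel is $1$-dimensional, producing a unique cubic $C \in L_\bfp$ containing $q$. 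Writing $C = q \cdot \ell$ forces the three remaining points onto the line $\ell$.

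The main obstacle is the reducible case $q = \ell_1 \ell_2$, which requires a short bootstrapping step. The six points of $\bfp$ on $q$ split between $\ell_1$ and $\ell_2$; if four lay on $\ell_1$, the same B\'ezout argument as above would put $\ell_1$ into the base locus of $L_\bfp$, contradicting reducedness. So exactly three lie on each of $\ell_1$ and $\ell_2$. Applying the easy direction to the three collinear points on $\ell_1$, the complementary six points (the three on $\ell_2$ together with the three points $\{p_i\}_{i \notin J}$) lie on a quadric $q_1$. Since $q_1$ meets $\ell_2$ in three points, B\'ezout forces $\ell_2 \subseteq q_1$, so $q_1 = \ell_2 \cdot \ell'$ for some line $\ell'$, and $\ell'$ contains exactly the three points $\{p_i\}_{i \notin J}$, completing the argument.
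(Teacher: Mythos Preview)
Your argument for the direction ``six on a quadric $\Rightarrow$ three collinear'' is sound, including the bootstrap for the reducible case. However, the direction you call ``easier'' contains a genuine error. You write: ``For any $C \in L_\bfp$, B\'ezout gives $C \cdot \ell = 3$, but $C$ already vanishes at three points of $\ell$, so $\ell \subseteq C$.'' This is false: a cubic meeting a line in exactly three reduced points does \emph{not} contain the line---that is precisely the transversal case of B\'ezout. Concretely, a generic element of $L_\bfp$ is irreducible (by \autoref{prop:FibreIsFinite}) and hence contains no line at all, yet it still passes through the three collinear base points.

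The fix is to apply the same restriction technique you already use for the conic. Restrict the pencil $L_\bfp$ to $\ell$, obtaining a linear map into $H^0(\ell,\mathcal{O}_\ell(3)) \cong \bbC^4$ whose image lands in the $1$-dimensional space of binary cubics vanishing at the three prescribed points. The map is nonzero (otherwise $\ell \subseteq Z(L_\bfp)$, contradicting $0$-dimensionality), so its kernel is $1$-dimensional, yielding a cubic $C = \ell \cdot q \in L_\bfp$ with $q$ through the remaining six points. Since your reducible-quadric bootstrap invokes this direction, it is only valid once this repair is made.

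For comparison, the paper states the corollary without proof; the intended argument is the classical Cayley--Bacharach statement (any cubic through eight of the nine complete-intersection points passes through the ninth), which dispatches both directions uniformly. Your restriction-of-the-pencil approach is a legitimate alternative once applied symmetrically to the line as well as to the conic.
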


A candidate quatroid $\mathcal Q = (\calI, \calJ)$ is called \mydef{exhaustive} if every $i \in \{1,2,\ldots,8\}$ is involved in some $I \in \mathcal I$ or $J \in \mathcal J$. \autoref{lem:ReducedBaseLociConditions} has the following strong consequence.
\begin{corollary}
\label{cor:ExhaustiveReduced}
Let $\mathcal Q$ be an exhaustive B\'ezoutian quatroid such that $\calQ \not\leq \calQ'$ for every $\mathcal Q'$ in the orbit of $\mathcal Q_{10}$. Then for every $\p \in \mathcal S_{\mathcal Q}$, the base locus $Z(L_{\p})$ is reduced.
\end{corollary}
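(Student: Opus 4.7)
The plan is to argue this essentially by unpacking the definitions and combining \autoref{prop:FibreIsFinite} with \autoref{lem:ReducedBaseLociConditions}. Because $\mathcal Q$ is B\'ezoutian, \autoref{lem:havingcubicsimpliesbezoutian} tells us that any $\bfp \in \mathcal S_{\mathcal Q}$ satisfies the conditions of \autoref{prop:FibreIsFinite}. In particular, part (4) of that proposition gives that $Z(L_\bfp)$ is a $0$-dimensional scheme of degree $9$ whose residue with respect to $\bfp$ is a single reduced point $p_9$, the Cayley--Bacharach point of $\bfp$.

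The first key step is to recast ``reduced'' in terms of point coincidences. Since $p_1, \ldots, p_8$ are distinct and each appears in $Z(L_\bfp)$ as a reduced point (because $\bfp \in \calP$), and since the residue $p_9$ is reduced, the scheme $Z(L_\bfp)$ is reduced if and only if $p_9$ does not coincide with any of the $p_i$ for $i=1,\ldots,8$. Indeed, any failure of reducedness can only occur at a point where the scheme has length at least two, which (given the degree-$9$ decomposition $\bfp \cup \{p_9\}$) happens precisely when $p_9 = p_i$ for some $i$.

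The second key step is to invoke \autoref{lem:ReducedBaseLociConditions}. The hypothesis that $\calQ \not\leq \calQ'$ for every $\calQ'$ in the orbit of $\calQ_{10}$ allows us to apply that lemma, which forbids $p_9$ from coinciding with any $p_i$ that participates in a triple of $\mathcal I$ or a sextuple of $\mathcal J$. The exhaustiveness hypothesis then closes the gap: by definition, every index $i \in \{1, \ldots, 8\}$ is involved in some $I \in \mathcal I$ or some $J \in \mathcal J$, so the exclusion from \autoref{lem:ReducedBaseLociConditions} applies to every index. Combining with the first step yields that $Z(L_\bfp)$ is reduced.

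I do not anticipate any real obstacle here: the statement is essentially a corollary in the literal sense, and the argument amounts to noting that \autoref{lem:ReducedBaseLociConditions} covers every index exactly when $\calQ$ is exhaustive. The only subtlety to be careful about is making explicit the equivalence between reducedness of $Z(L_\bfp)$ and non-coincidence of $p_9$ with the configuration points; this relies crucially on the residue being a \emph{reduced} point, which is already guaranteed by \autoref{prop:FibreIsFinite}(4) and does not need to be reproved.
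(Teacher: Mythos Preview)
Your proof is correct and follows exactly the intended approach: the paper states this as an immediate corollary of \autoref{lem:ReducedBaseLociConditions} without giving a separate argument, and your unpacking---that reducedness of $Z(L_\bfp)$ is equivalent to $p_9 \neq p_i$ for all $i$, and that exhaustiveness makes the lemma cover every index---is precisely what is implicit there. The only minor wording issue is the phrase ``each appears in $Z(L_\bfp)$ as a reduced point (because $\bfp \in \calP$)'', which on its own is slightly circular; the actual reason, which you do supply in the next sentence, is the degree count together with the residue being a single reduced point.
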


\autoref{cor:ExhaustiveReduced} says that there exist quatroids for which every realization gives a reduced base locus. We say that these quatroids, themselves,  \mydef{have reduced base locus}. In this case, knowing the matroid underlying the reduced base locus of the pencil of cubics is equivalent to knowing the quatroid underlying any $8$-subset of the base locus by \autoref{cor:ConicLineSwap}.

\begin{lemma}\label{lem:Q63}
The pair $
\mathcal Q_{63} = (\{123,145,246,356\},\{125678,134678,234578\})$ is not $\bbC$-representable. 
\end{lemma}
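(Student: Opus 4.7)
The plan is to exploit \autoref{cor:ExhaustiveReduced} and \autoref{cor:ConicLineSwap} to translate the three conic conditions of $\mathcal{Q}_{63}$ into line conditions on the nine points of the base locus, and then to recognize the resulting incidence structure as the Fano plane, which is well-known to be non-representable in characteristic different from $2$.

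First I would verify the three combinatorial hypotheses needed to invoke \autoref{cor:ExhaustiveReduced}. The pair $\calQ_{63}$ is clearly exhaustive since every index in $\{1,\ldots,8\}$ appears in some triple or sextuple. Checking that $\calQ_{63}$ is B\'ezoutian is a direct verification: all pairwise intersections of triples in $\calI_{63}$ have size one, all pairwise intersections of sextuples in $\calJ_{63}$ have size four, and every line--conic pair has intersection of size two. Finally, to check that $\calQ_{63} \not\leq \calQ'$ for any $\calQ'$ in the orbit of $\calQ_{10}$, one uses the combinatorial criterion from \autoref{rem:OrderIsCombinatorial}(b): no two triples in $\calI_{63}$ are disjoint, so $\calI_{63}^2$ contains no sextuple, and so the candidate sextuples are precisely those in $\calJ_{63}$; but each of them meets every triple in $\calI_{63}$ in two elements, never the single element required for a member of the orbit of $\calQ_{10}$.

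With these checks in place, \autoref{cor:ExhaustiveReduced} ensures that for any hypothetical $\bfp \in \mathcal{S}_{\calQ_{63}}$, the base locus $Z(L_\bfp)$ is reduced of length $9$, so \autoref{cor:ConicLineSwap} applies. The key step is to translate: the sextuple $\{1,2,5,6,7,8\}$ forces $\{p_3,p_4,p_9\}$ to be collinear; the sextuple $\{1,3,4,6,7,8\}$ forces $\{p_2,p_5,p_9\}$ collinear; and $\{2,3,4,5,7,8\}$ forces $\{p_1,p_6,p_9\}$ collinear. Combined with the original line conditions from $\calI_{63}$, the seven points $p_1,\ldots,p_6,p_9$ support the seven collinear triples
\[
\{1,2,3\},\ \{1,4,5\},\ \{2,4,6\},\ \{3,5,6\},\ \{3,4,9\},\ \{2,5,9\},\ \{1,6,9\},
\]
which (after relabeling $9 \mapsto 7$) is precisely the line set of the \emph{Fano plane}.

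To finish I would invoke the classical fact that the Fano matroid is representable only over fields of characteristic $2$ (see, e.g., \cite{Oxley}), so in particular it admits no realization by seven points in $\bbP^2_\bbC$. This contradicts the existence of a realization of $\calQ_{63}$, proving non-representability over $\bbC$. I do not anticipate a serious obstacle; the main conceptual content is the identification of the Fano configuration after the conic--line swap, while the remaining steps are combinatorial verifications and invocations of previously established results.
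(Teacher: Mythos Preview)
Your proposal is correct and follows essentially the same route as the paper's proof: verify that $\calQ_{63}$ is exhaustive, B\'ezoutian, and not below any element of the orbit of $\calQ_{10}$, invoke \autoref{cor:ExhaustiveReduced} and \autoref{cor:ConicLineSwap} to convert the three conic conditions into the collinearities $349,\,259,\,169$ on the nine base-locus points, and recognize the resulting matroid on $\{1,\ldots,6,9\}$ as the Fano plane, which is not $\bbC$-representable. The only minor remark is that your argument for $\calQ_{63}\not\leq\calQ'$ implicitly uses that condition~(b) of \autoref{rem:OrderIsCombinatorial} is an equivalence on B\'ezoutian quatroids (proved later as \autoref{lemma:inclusion bezoutian}); the paper simply asserts this non-comparability without elaboration, so your more explicit check is a welcome addition rather than a deviation.
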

\begin{proof}
Suppose towards a contradiction that $\bfp$ represents $\mathcal Q_{63}$. Note that $\mathcal Q_{63}$ is B\'ezoutian, exhaustive, and $\mathcal Q_{63} \not\leq \calQ'$ for every $\calQ'$ in the orbit of $\mathcal Q_{10}$. By \autoref{cor:ConicLineSwap}, $\mathcal Q_{63}$ has reduced base locus and the matroid underlying $Z(L_{\p})$, as represented by nonbases, is
\[
M_{Z(L_\p)} = \{123,145,246,356,349,259,169\}.
\]
This is the Fano matroid, which is famously not $\mathbb{C}$-representable \cite[Prop.~6.4.8]{Oxley}. Hence $\mathcal Q_{63}$ is not $\mathbb{C}$-representable.
\end{proof}
\noindent Inspired by the proof of \autoref{lem:Q63}, we name $\mathcal Q_{63}$ the \mydef{Fano candidate}.

\section{Irreducibility of quatroid strata}\label{sec:IrreducibilityOfStrata}
In this section, we establish which realization spaces of quatroids are irreducible. 
\begin{theorem}\label{thm:allirreducible}
 For every quatroid $\calQ$ except those in the orbit of $\calQ_{41}$, the realization space $\calS_\calQ$ is irreducible. If $\calQ$ is in the orbit of $\calQ_{41}$, then $\calS_{\calQ}$ is the union of two irreducible components.
\end{theorem}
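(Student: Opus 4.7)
The plan is to exploit the normalization framework introduced before Lemma 3.15. For each of the $125$ orbit representatives $\mathcal{Q} = (\mathcal{I},\mathcal{J})$, I would select four points of the configuration in general linear position (guaranteed by the underlying matroid) and normalize them to the standard projective frame $[e_1],[e_2],[e_3],[e_1+e_2+e_3]$, thereby reducing the study of $\mathcal{S}_\mathcal{Q}$ to its moduli space $S_\mathcal{Q} \subseteq \mathbb{C}^r$. Since $\mathcal{S}_\mathcal{Q}$ is a $\PGL_2$-bundle over $S_\mathcal{Q}$, it is irreducible if and only if $S_\mathcal{Q}$ is, so the theorem reduces to a case-by-case analysis of $S_\mathcal{Q}$.

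For each representative I would then impose the line conditions in $\mathcal{I}$ in an order such that each successive triple $\{a,b,c\} \in \mathcal{I}$ involves at most one point $p_c$ whose coordinates have not yet been eliminated. For such a triple, the collinearity $\det(p_a \mid p_b \mid p_c) = 0$ is linear in the two free coordinates of $p_c$ and lets us solve for one of them rationally. This produces a rational parameterization of a dense open subset of $\mathcal{S}_\mathcal{I}$, which is irreducible for every rank-$\le 3$ representable simple matroid on $8$ points \emph{except} MacLane's, namely $\mathcal{M}_{\mathcal{I}_{41}}$. The conic conditions in $\mathcal{J}$ then cut out $\overline{S_\mathcal{Q}}$ inside this parameterization as the common vanishing locus of the Veronese determinants expressing, for each $J \in \mathcal{J}$, that six points lie on a conic.

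The main obstacle is that these residual polynomial equations can in principle factor, so irreducibility of $\overline{S_\mathcal{Q}}$ is not automatic from a dimension count: a single conic condition imposed on an irreducible variety could cut out a reducible hypersurface. My strategy to circumvent this is computational: for each quatroid, compute a primary decomposition (or a radical prime test) in \texttt{Macaulay2} of the ideal describing $\overline{S_\mathcal{Q}}$ in the free parameters, verify that the ideal is prime, and check that the dimension matches the one predicted by the explicit $\mathbb{Q}$-realization provided for Theorem 3.14. Because the inequations cutting $S_\mathcal{Q}$ out of $\overline{S_\mathcal{Q}}$ remove only proper closed subsets, they cannot disconnect an irreducible closure, so primality of this ideal suffices to conclude irreducibility of $\mathcal{S}_\mathcal{Q}$.

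The exceptional orbit of $\mathcal{Q}_{41}$ is handled directly by Lemma 3.15: after normalization, the residual system collapses to the quadratic $-z_2^2 + z_2 = 1$, whose two complex roots lie in distinct $\PGL_2$-orbits and therefore give rise to two disjoint irreducible components of $\mathcal{S}_{\mathcal{Q}_{41}}$. For every other quatroid, the computation described above produces a prime ideal of the expected dimension, thereby completing the case analysis.
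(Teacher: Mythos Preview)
Your approach differs substantially from the paper's, and it contains an unjustified step. You assert that sequentially imposing the line conditions produces a rational parameterization of a dense open subset of $\calS_\calI$ for every underlying matroid except MacLane's. This is not automatic: your elimination order requires that each successive triple involve at most one point whose coordinates are still free, but for matroids with many interlocking triples (for instance $\calQ_{40}$, with seven line conditions) no such order exists, and the residual relations among the surviving parameters are genuinely nonlinear. The irreducibility of those matroid strata is not known in advance---it is, in fact, part of what the theorem asserts. If instead you mean to throw all residual line and conic determinants into one ideal and test primality in \texttt{Macaulay2}, the logic is sound in principle, but you have not carried out the computation, and the intermediate claim as stated is false.

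The paper avoids brute-force primality tests almost entirely. It first proves two reductions: \autoref{prop:FewFigureReduction} shows that deleting a point lying on at most two lines (and no conic), or on at most one conic (and no line), preserves irreducibility of the realization space; \autoref{lem:birational realization} shows that for exhaustive B\'ezoutian quatroids with reduced base locus one can replace one of the eight points by the Cayley--Bacharach point of the pencil (a \emph{modification} $\calQ \leadsto \calQ'$) without affecting irreducibility. Iterating these reductions computationally (\autoref{lem:ABCreductions} and \autoref{lem:Modifications}) leaves only six orbits---$\calQ_{21},\calQ_{25},\calQ_{38},\calQ_{58},\calQ_{101},\calQ_{123}$---beyond $\calQ_{41}$. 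For these six, the paper normalizes as you propose, but then reduces each to a single hypersurface in the free parameters and certifies irreducibility by checking that the singular locus has codimension at least three, rather than by primary decomposition. The reductions are the essential content; they are what make the residual computation small enough to be verifiable.
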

The statement for $\calQ_{41}$ has been shown in \autoref{lem:Q41}. The rest of the proof of \autoref{thm:allirreducible} is built on a series of reductions, inspired by \cite[Thm.\ 4.5]{ClGrMoMo:MatroidStratification}. To state them precisely, we introduce a definition. 

\begin{definition}
Let $\mathcal Q=(\mathcal I,\mathcal J)$ be a quatroid and let $i \in \{1,2,\ldots,8\}$. The \mydef{deletion of $i$ from $\mathcal Q$} is
\[
\mydef{\mathcal Q-\{i\}}=(\{I \in \mathcal I \mid i \notin I\},\{J \in \mathcal J \mid j \notin J\}).
\]
\end{definition}
\begin{proposition}
\label{prop:FewFigureReduction}
Let $\calQ = (\calI,\calJ)$ be a quatroid and $i\in\{1,2,\ldots,8\}$. Suppose one of the following conditions holds:\begin{itemize}
\item $i$ is in at most two elements $I_1,I_2$ of $\mathcal I$ and no elements of $\mathcal J$,
\item $i$ is in at most one element of $\mathcal J$ and no elements of $\mathcal I$.
\end{itemize}
Then $\calS_{\calQ}$ is irreducible if and only if $\calS_{\calQ - \{i\}}$ is irreducible. In particular, if $\mathcal Q = (\mathcal I, \emptyset)$ and $|\mathcal I|\leq 6$ then $\mathcal S_{\mathcal Q}$ is irreducible. 
\end{proposition}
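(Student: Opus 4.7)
The plan is to analyze the forgetful projection $\pi_i \colon \mathcal S_{\mathcal Q} \to \mathcal S_{\mathcal Q - \{i\}}$ that drops the $i$-th point. I would first observe that the target is nonempty: for any $\bfp \in \mathcal S_{\mathcal Q}$, the configuration $\pi_i(\bfp)$ represents $\mathcal Q - \{i\}$ exactly, since removing a point cannot manufacture new collinearities or coconicities. To make $\pi_i$ amenable to a fiber-dimension argument, I would factor it through an incidence variety $Z \subseteq \mathcal S_{\mathcal Q - \{i\}} \times \mathbb{P}_{\mathbb{C}}^2$ consisting of pairs $(\bfp',p)$ for which $p$ satisfies the line/conic conditions of $\mathcal Q$ that involve $i$. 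Then $\mathcal S_{\mathcal Q}$ embeds into $Z$ as the complement of the finitely many closed loci where $p$ coincides with some $p'_j$ or produces an unwanted collinearity or coconicity; this open subset is nonempty because $\mathcal Q$ is representable.

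The heart of the proof is verifying that $Z \to \mathcal S_{\mathcal Q - \{i\}}$ has irreducible fibers of constant dimension in each subcase. Under the first hypothesis: if $i$ lies in no triples, the fibers are all of $\mathbb{P}_{\mathbb{C}}^2$; if $i$ lies in exactly one triple $\{i,a,b\}$, the fiber over $\bfp'$ is the line $\ell_{p'_a p'_b}$; and if $i$ lies in exactly two triples $\{i,a,b\},\{i,c,d\}$, the fiber is the single reduced point $\ell_{p'_a p'_b} \cap \ell_{p'_c p'_d}$. In this last case the two lines are indeed distinct, for if they coincided then $\{a,b,c\},\{a,b,d\},\{a,c,d\},\{b,c,d\}$ would all lie in $\mathcal I$, and repeated applications of B\'ezout's second weak criterion would force at least six triples of $\mathcal I$ through $i$, contradicting the hypothesis. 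Under the second hypothesis, the five points $\{p'_a,\ldots,p'_e\}$ must be in general linear position: any three of them collinear would give a triple in $\mathcal I$ meeting the sextuple $\{i,a,\ldots,e\} \in \mathcal J$ in three points, violating B\'ezout's first weak criterion. Thus they determine a unique irreducible conic, which is the fiber.

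Applying the standard fact that a surjective morphism onto an irreducible base with equidimensional irreducible fibers has irreducible total space (and the converse via the fact that the image of an irreducible variety is irreducible), $Z$ is irreducible if and only if $\mathcal S_{\mathcal Q - \{i\}}$ is. Since $\mathcal S_{\mathcal Q}$ is a nonempty open subset of $Z$, the biconditional descends, proving the main statement.

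For the ``in particular'' clause, I would iterate. When $\mathcal Q = (\mathcal I,\emptyset)$ with $|\mathcal I|\leq 6$, pigeonhole on the $3|\mathcal I| \leq 18$ incidence-slots across eight indices produces some $i$ in at most two triples, so the main statement applies; the quatroid $\mathcal Q - \{i\}$ is of the same form with at most $6$ triples on $7$ indices, and pigeonhole again yields a suitable index. After finitely many reductions we reach a pair with $\mathcal I = \emptyset$, whose realization space is the complement of the big diagonal in $(\mathbb{P}_{\mathbb{C}}^2)^k$, visibly irreducible. The main technical obstacle is the fiber analysis in the two-triple and one-sextuple cases: establishing constant fiber dimension and irreducibility over \emph{every} point of the base requires careful use of B\'ezout's weak criteria together with the condition that $\bfp' \in \mathcal S_{\mathcal Q - \{i\}}$ represents $\mathcal Q - \{i\}$ exactly, not merely satisfies its stated incidences, in order to rule out the degenerations that would spoil the argument.
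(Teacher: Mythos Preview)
Your approach to the main biconditional is essentially the paper's: both set up a forgetful projection and exploit that the fibers are a point, a line, a conic, or all of $\mathbb{P}_{\mathbb{C}}^2$, then argue irreducibility transfers across a bundle. Your fiber analysis, including the use of B\'ezout's weak criteria to rule out degeneration, is correct and somewhat more detailed than the paper's terse treatment.

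There is a minor gap: you assert that $\mathcal S_{\mathcal Q}$ being a nonempty open subset of $Z$ gives the biconditional ``$\mathcal S_{\mathcal Q}$ irreducible $\Leftrightarrow$ $Z$ irreducible'', but for the forward implication you need $\mathcal S_{\mathcal Q}$ dense in $Z$, not merely nonempty. This is easy to supply: for every $\bfp'$ in the base, the finitely many bad choices of $p$ in the fiber (coincidences with some $p'_j$, or accidental extra collinearities/coconicities) form a proper closed subset of that fiber, so $\mathcal S_{\mathcal Q}$ meets every fiber of $Z$ and hence every irreducible component.

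The ``in particular'' clause has a more substantive gap. Your pigeonhole iteration can stall: take four collinear points together with four generic points ($\mathcal Q_{121}$ in the paper's list, with $|\mathcal I|=4$). Indices $1,2,3,4$ each lie in exactly three triples and indices $5,6,7,8$ lie in none, so no index lies in one or two triples; deleting an index in zero triples does not shrink $\mathcal I$, and you never reach $\mathcal I=\emptyset$. (The paper's own proof does not address this clause explicitly either.) A complete argument must handle separately the case where the underlying matroid has a rank-$2$ flat of size at least four, for instance by observing directly that the realization space of ``$k$ points on a line plus generic points'' is irreducible.
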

\begin{proof}
First, notice that if $i$ appears in no elements of $\calI$ and $\calJ$, then $\calQ = \calQ - \{i\}$. Hence, we may assume $i$ appears in at least one element of $\calI \cup \calJ$. Without loss of generality, let $i = 8$.

Suppose first $i$ belongs to exactly one element of $I \in \calI$, say $I = 678$. Let $\pi: (\bbP_{\mathbb{C}}^2)^8 \to (\bbP_{\mathbb{C}}^2)^7$ be the projection on the first seven factors. Let $X = \pi (\calS_\calQ)$. First, observe $\calS_{\calQ - \{i\}}$ is a dense subset of $X \times \bbP_{\mathbb{C}}^2$: indeed, the line and conic conditions of $\calQ - \{i\}$ are the line and conic conditions of $\calQ$ not involving $i$, so they are satisfied by $\bfp \in (\bbP_{\mathbb{C}}^2)^8$ if and only if $\pi(\bfp) \in X$. In particular, $X$ is irreducible if and only if $\calS_{\calQ - \{i\}}$ is irreducible. Moreover, $\calS_\calQ$ is an open dense subset of a $\bbP_{\mathbb{C}}^1$-bundle over $X$, embedded in  $X \times \bbP_{\mathbb{C}}^2$: the fiber over $\bfp' \in X$ is an open subset of the line $\ell_{67}$. In particular, $X$ is irreducible if and only if $\calS_{\calQ}$ is irreducible. 

If $i$ belongs to exactly one element $J \in \calJ$, the proof is similar. Suppose $J = 345678$. The proof follows the same argument described above, with the only difference that $\calS_{\calQ}$ is an open subset of a fiber bundle over $X$ whose fiber at $\bfp$ is the conic $q_{34567}$. 

If $i$ belongs to two exactly two elements $I_1,I_2 \in \calI$, suppose without loss of generality $I_1 = 678$ and $I_2 = 458$. Again, the proof is similar. In this case $\calS_\calQ$ is birational to $X$, with a birational map given by $\phi\colon X \to \calS_{\calQ}$ defined by $\phi(\bfp') = (\bfp', p_8)$ where $p_8 = \ell_{45} \cap \ell_{78}$. 
 \end{proof}
 
 A second reduction is built on the Cayley--Bacharach construction introduced in \autoref{sec:Quatroids}. Let $\mathcal Q$ be an exhaustive B\'ezoutian quatroid such that $\mathcal Q \not\leq \mathcal Q'$ for every $\calQ'$ in the orbit of $\calQ_{10}$, and let $\bfp$ be any of its realizations. Then $Z(L_\bfp)$ is reduced by \autoref{cor:ExhaustiveReduced}. The underlying matroid of $Z(L_{\bfp})$ is well-defined by \autoref{cor:ConicLineSwap} and so is the set of quatroids obtained by deleting any of the nine points. If $\calQ'$ is one such quatroid, write $\mathcal Q \mydef{\leadsto} \mathcal Q'$ and say that $\mathcal Q'$ is a \mydef{modification} of $\mathcal Q$. 

\begin{lemma}\label{lem:birational realization}
Let $\calQ, \calQ'$ be quatroids with $\mathcal Q \leadsto \mathcal Q'$. If  $\mathcal S_{\mathcal Q'}$ is irreducible then so is $\mathcal S_{\mathcal Q}$. 
\end{lemma}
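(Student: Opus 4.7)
The plan is to exhibit a birational isomorphism between $\calS_\calQ$ and $\calS_{\calQ'}$ via the Cayley--Bacharach correspondence, from which irreducibility of $\calS_{\calQ'}$ will transfer to $\calS_\calQ$. By definition of $\calQ \leadsto \calQ'$, there is a fixed index $j \in \{1,\ldots,9\}$ such that for every $\bfp \in \calS_\calQ$ with $9$-point base locus $Z(L_\bfp) = \{p_1,\ldots,p_8,p_9\}$ (where $p_9$ denotes the Cayley--Bacharach point), the quatroid $\calQ'$ is represented by the $8$-tuple obtained by deleting the entry indexed by $j$. Reducedness of the base locus comes from \autoref{cor:ExhaustiveReduced}, while \autoref{cor:ConicLineSwap} guarantees that the quatroid of any $8$-subconfiguration of $\{p_1,\ldots,p_9\}$ is combinatorially determined by $\calQ$ alone, independently of the chosen $\bfp$.

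The first step is to define the morphism
\[
\phi\colon \calS_\calQ \to \calS_{\calQ'}, \qquad \bfp \longmapsto (p_1,\ldots,\widehat{p_j},\ldots,p_8,p_9),
\]
in which $p_9$ is computed regularly from $\bfp$ as the support of the residue ideal $(L_\bfp):I(\bfp)$; its image lies in $\calS_{\calQ'}$ precisely because of the combinatorial invariance just recalled. The second step is to construct an inverse rational map $\psi\colon \calS_{\calQ'} \dashrightarrow \calS_\calQ$: for $\bfp' \in \calS_{\calQ'}$, compute its own Cayley--Bacharach point $p^*$ and reinsert it at position $j$ to form an $8$-tuple $\psi(\bfp')$. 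Since the cubics vanishing on $\psi(\bfp')$ and on $\bfp'$ both coincide with those vanishing on $Z(L_{\bfp'}) \cup \{p^*\}$, the pencils $L_{\psi(\bfp')}$ and $L_{\bfp'}$ coincide, their base loci agree as $9$-point sets, and $\phi$ and $\psi$ are mutually inverse wherever both are defined.

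The main obstacle is to verify that $\psi(\bfp')$ actually represents $\calQ$ on a dense open subset of $\calS_{\calQ'}$, rather than landing in a strictly smaller stratum $\calS_{\calQ''}$ with $\calQ'' \leq \calQ$: \emph{a priori} the reinserted point $p^*$ could coincide with some $p_i$ or create new linear or conic dependencies. The identity $\psi \circ \phi = \mathrm{id}_{\calS_\calQ}$ already forces $\phi(\calS_\calQ)$ to be contained in the constructible locus where $\psi$ lands in $\calS_\calQ$; since $\phi$ is an injective morphism, a dimension count together with \autoref{cor:ConicLineSwap} applied to the full $9$-point locus should show that $\phi(\calS_\calQ)$ is Zariski-dense in $\calS_{\calQ'}$, so $\psi$ inverts $\phi$ on a dense open subset and the two varieties are birational. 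Once this is established, the irreducibility of $\calS_{\calQ'}$ immediately yields that of $\calS_\calQ$.
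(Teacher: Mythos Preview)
Your approach is essentially the same as the paper's: both establish a birational correspondence between $\calS_\calQ$ and $\calS_{\calQ'}$ via the Cayley--Bacharach construction. The paper routes this through an intermediate variety $\mathcal K\subseteq(\bbP_\bbC^2)^9$ of $9$-point configurations realizing the base-locus matroid $\mathcal M$ and forming a complete intersection of type $(3,3)$, observing that $\calS_\calQ\cong\mathcal K$ and that the forgetting map $\mathcal K\to\calS_{\calQ'}$ is birational; you construct the mutually inverse maps $\phi,\psi$ directly. The geometric content is identical.

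One remark on your density step. The phrase ``a dimension count \ldots\ should show that $\phi(\calS_\calQ)$ is Zariski-dense in $\calS_{\calQ'}$'' is misleading: you have no independent handle on $\dim\calS_\calQ$ at this point, so no dimension comparison is available. The argument you actually need (and which you almost state) is purely open-set based. By \autoref{cor:ConicLineSwap}, the matroid on the $9$-point base locus of any $\bfp'\in\calS_{\calQ'}$ with reduced base locus is completely determined by $\calQ'$, hence equals $\mathcal M$; consequently $\psi(\bfp')$ automatically represents $\calQ$ whenever $\psi$ is defined. Thus the locus $U\subseteq\calS_{\calQ'}$ on which $\psi$ lands in $\calS_\calQ$ is exactly the open locus where $Z(L_{\bfp'})$ is reduced. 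It is non-empty because it contains $\phi(\calS_\calQ)$, and since $\calS_{\calQ'}$ is irreducible, $U$ is dense. No dimension count enters. With this clarification your argument is complete and matches the paper's.
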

\begin{proof}
Let $\mathcal Q \leadsto \mathcal Q'$ and write $\mathcal M$ for the matroid of the base locus $Z(L_{\p})$ for any $\p \in \mathcal S_{\mathcal Q}$. Consider the realization space $\mathcal S_{\mathcal M} \subseteq (\mathbb{P}_{\mathbb{C}}^2)^9$ of that matroid. The variety $\mathcal S_{\mathcal Q}$ is isomorphic to the variety $\mathcal K \subseteq \mathcal S_{\mathcal M} \subseteq (\mathbb{P}_{\mathbb{C}}^2)^9$ of configurations which are complete intersections of type $(3,3)$. Since $\mathcal S_{\mathcal Q'}$ is assumed to be irreducible, the map from $\mathcal K$ to $\mathcal S_{\mathcal Q'}$ given by forgetting one of the nine points shows that $\mathcal K$ is birational to $\mathcal S_{\mathcal Q'}$. 
\end{proof}

Based on these results, given a quatroid $\mathcal Q$, there are four ways to reduce the problem of determining irreducibility of $\mathcal S_{\mathcal Q}$: 
\begin{enumerate}[(i)]
 \item $\calQ$ reduces to $\calQ' = \calQ - \{i\}$ with $i$ involved in at most two line conditions and no conic condition,
 \item $\calQ$ reduces to $\calQ' = \calQ - \{i\}$ with $i$ involved in no line condition and one conic condition,
 \item $\calQ$ has at most six line conditions and no conic condition,
 \item $\calQ \leadsto \calQ'$.
\end{enumerate}
In these cases, if $\calS_{\calQ'}$ is irreducible, so is $\calS_{\calQ}$.
\begin{lemma}
\label{lem:ABCreductions}
All quatroids in orbits other than the $24$ orbits represented by
\begin{gather*} \mathcal Q_{4},
 \mathcal Q_{5},
 \mathcal Q_{13},
 \mathcal Q_{19},
 \mathcal Q_{21},
 \mathcal Q_{22},
 \mathcal Q_{25},
 \mathcal Q_{38},
 \mathcal Q_{41},
 \mathcal Q_{46},
 \mathcal Q_{49},
 \mathcal Q_{51}, \\
 \mathcal Q_{53},
 \mathcal Q_{57},
 \mathcal Q_{58},
 \mathcal Q_{59},
 \mathcal Q_{62},
 \mathcal Q_{66},
 \mathcal Q_{73},
 \mathcal Q_{76},
 \mathcal Q_{81},
 \mathcal Q_{82},
 \mathcal Q_{101},
 \mathcal Q_{123}
\end{gather*}
have irreducible realization spaces. 
\end{lemma}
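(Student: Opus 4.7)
The plan is to exhibit a termination argument in which, for each of the $125$ orbits of quatroids not appearing in the listed $24$, one of the four reductions (i)--(iv) preceding the statement produces a ``simpler'' quatroid whose realization space is already known to be irreducible. Concretely, I would organize the $125$ orbits into a directed graph $G$ whose vertices are orbit representatives and in which an edge $\mathcal{Q}\to\mathcal{Q}'$ is drawn whenever one of the reductions (i)--(iv) applies and $\calS_{\calQ'}$ irreducible implies $\calS_\calQ$ irreducible. The goal is to show that every vertex not among the $24$ listed orbits has a path in $G$ to a vertex satisfying the base case (iii) of \autoref{prop:FewFigureReduction}, i.e.\ a quatroid $(\mathcal{I},\emptyset)$ with $|\mathcal{I}|\le 6$.

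First I would handle the base case: by \autoref{prop:FewFigureReduction}, any quatroid with no conic conditions and at most six line conditions has irreducible realization space, so these form the ``leaves'' of the induction. Next, for each remaining orbit, I would scan the list of indices $i\in\{1,\dots,8\}$ to find one whose involvement in the conditions is small enough to apply reduction (i) or (ii); if such an $i$ exists, the orbit reduces to a quatroid with strictly fewer conditions, hence by induction (on the total number of line plus conic conditions, say) we conclude irreducibility. This handles all orbits whose condition-pattern is ``sparse'' at some point $i$.

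For orbits that are everywhere ``dense'' (every point is over-involved in the conditions, so reductions (i) and (ii) both fail), I would apply reduction (iv): check that $\mathcal{Q}$ is exhaustive and B\'ezoutian and that $\mathcal{Q}\not\leq\mathcal{Q}'$ for any $\mathcal{Q}'$ in the orbit of $\mathcal{Q}_{10}$, so that \autoref{cor:ExhaustiveReduced} and \autoref{lem:birational realization} apply. For such $\mathcal{Q}$, one computes the matroid of the $9$-point base locus (using the Cayley--Bacharach swap of \autoref{cor:ConicLineSwap}), and then takes each of the nine possible $8$-point deletions to produce a modification $\mathcal{Q}\leadsto \mathcal{Q}'$. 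If at least one modification $\mathcal{Q}'$ is an orbit whose irreducibility has already been established (either as a base case or by previous inductive steps), we conclude by \autoref{lem:birational realization}. This is the step I expect to carry the weight of the argument: one must verify computationally that for every orbit outside the listed $24$ (and outside the orbit of $\mathcal{Q}_{41}$), some combination of the four reductions eventually leads back to an orbit in the base case.

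The main obstacle is the bookkeeping: there are roughly $100$ orbits to process, and for each one, checking which reductions apply and identifying the resulting orbit requires canonicalization modulo $\mathfrak{S}_8$ and, in case (iv), explicit computation of the base-locus matroid. I would carry this out using the auxiliary data of representatives referenced in the paper, confirming for each of the $100$ non-exceptional orbits that the reduction graph $G$ has a terminating path. Finally, one verifies that the listed $24$ orbits are precisely those for which \emph{none} of (i)--(iv) applies (every index is too heavily involved, and either the quatroid is non-exhaustive/non-B\'ezoutian or it is comparable to $\mathcal{Q}_{10}$, or every Cayley--Bacharach modification lands back inside the $24$), which both justifies the exceptional list and closes the induction. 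The irreducibility of the $23$ exceptional orbits other than $\mathcal{Q}_{41}$ is then treated case-by-case in the remainder of \autoref{sec:IrreducibilityOfStrata}, completing the proof of \autoref{thm:allirreducible}.
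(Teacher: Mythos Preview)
Your overall strategy---build a reduction graph and show every non-exceptional orbit terminates at a base case---matches the paper's approach. However, there is a structural mismatch and one factual error.

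The paper's proof of this lemma uses only reductions (i)--(iii), not (iv). The argument is purely computational: the function \texttt{QuatroidReductions()} in the auxiliary package iteratively applies (i)--(iii) to every orbit, and the $24$ listed orbits are exactly the residue where none of (i)--(iii) applies. Reduction (iv), the Cayley--Bacharach modification, is deliberately deferred to the next lemma (\autoref{lem:Modifications}), where it resolves a further subset of the $24$.

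Your claim that ``the listed $24$ orbits are precisely those for which none of (i)--(iv) applies'' is therefore incorrect: reduction (iv) \emph{does} apply to many of them. For instance $\mathcal{Q}_4\leadsto\mathcal{Q}_{17}$, $\mathcal{Q}_{13}\leadsto\mathcal{Q}_{20}$, $\mathcal{Q}_{82}\leadsto\mathcal{Q}_{43}$, and so on---this is exactly the content of \autoref{lem:Modifications}. The $24$ orbits are characterized as those not reducible by (i)--(iii) alone. If you fold (iv) into the present lemma as you propose, the exceptional list would be strictly smaller and would not match the statement you are asked to prove.

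In short: drop reduction (iv) from your argument here, and the proof becomes the paper's (computational) proof. Your inclusion of (iv) is not logically wrong as a route to irreducibility, but it proves a different statement with a different exceptional set.
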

\begin{proof}
The result is obtained by iteratively applying reductions (i)--(iii). The proof is computational and the reductions are performed by the function \texttt{QuatroidReductions()} in the \texttt{Quatroids.jl} package. This function produces the file \texttt{ReductionProofs.txt}.
\end{proof}

\begin{lemma}
\label{lem:Modifications}
The following relationships hold
\begin{gather*}
\mathcal Q_{4}\leadsto\mathcal Q_{17}, \quad \,\, \,\,\,
\mathcal Q_{5} \leadsto \mathcal Q_{25}, \quad \,\, \,\, \mathcal Q_{13}\leadsto\mathcal Q_{20},\quad \,\,
 \mathcal Q_{19}\leadsto\mathcal Q_{60},\quad \,\,
 \mathcal Q_{22}\leadsto\mathcal Q_{30},\quad \,\,
 \mathcal Q_{46}\leadsto\mathcal Q_{25},\quad \,\,\\
 \mathcal Q_{49}\leadsto\mathcal Q_{25},\quad \,\,
 \mathcal Q_{51}\leadsto\mathcal Q_{25},\quad \,\, 
 \mathcal Q_{53}\leadsto\mathcal Q_{40}, \quad \,\,
 \mathcal Q_{57}\leadsto\mathcal Q_{30},\quad \,\,
 \mathcal Q_{59}\leadsto\mathcal Q_{32},\quad \,\,
 \mathcal Q_{62}\leadsto\mathcal Q_{32},\quad \,\,\\
 \mathcal Q_{66}\leadsto\mathcal Q_{36},\quad \,\,
 \mathcal Q_{73}\leadsto\mathcal Q_{37},\quad \,\,
 \mathcal Q_{76}\leadsto\mathcal Q_{39},\quad \,\,
 \mathcal Q_{81}\leadsto\mathcal Q_{45},\quad \,\,
 \mathcal Q_{82}\leadsto\mathcal Q_{43}.
 \end{gather*}
 In particular, the realization spaces of the quatroids in the same orbits as
 \[
 \mathcal Q_{4},
 \mathcal Q_{13},
 \mathcal Q_{19},
 \mathcal Q_{22},
 \mathcal Q_{57},
 \mathcal Q_{59},
 \mathcal Q_{62},
 \mathcal Q_{66},
 \mathcal Q_{73},
 \mathcal Q_{81},
 \mathcal Q_{82}
 \]
 are all irreducible.
\end{lemma}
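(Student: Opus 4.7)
The plan splits into two halves: establish the $17$ listed modification relations explicitly, then combine them with \autoref{lem:birational realization}. For each source quatroid $\mathcal Q_i$, I would begin with combinatorial checks confirming that $\mathcal Q_i$ is exhaustive, B\'ezoutian, and satisfies $\mathcal Q_i \not\leq \mathcal Q'$ for every $\mathcal Q'$ in the orbit of $\mathcal Q_{10}$. These conditions can all be read directly from $(\calI,\calJ)$, and together they trigger \autoref{cor:ExhaustiveReduced}, which ensures that the base locus $Z(L_{\mathbf p})$ is reduced for every realization $\mathbf p$ of $\mathcal Q_i$, so the modification relation is well-defined.

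Next, taking the explicit $\mathbb Q$-representative $\mathbf p$ listed in the auxiliary file \texttt{RationalRepresentatives.txt}, I would compute the two-dimensional pencil $L_{\mathbf p}$ by solving the linear system $\{C(p_i) = 0\}_{i=1}^{8}$ and recover the Cayley--Bacharach point $p_9$ as the residual intersection of two generic members of $L_{\mathbf p}$. By \autoref{cor:ConicLineSwap}, the conic incidences on the enlarged configuration $p_1,\ldots,p_9$ are combinatorially determined by its collinear triples, so it suffices to check which of the $\binom{9}{3}=84$ three-by-three minors vanish. Then, for each index $k \in \{1,\ldots,9\}$, I would compute the quatroid supported on $\{p_1,\ldots,p_9\}\setminus\{p_k\}$ and match its $\mathfrak S_8$-canonical form against that of the proposed target $\mathcal Q_j$, using the canonical-form routine in the \texttt{Quatroids.jl} package. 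For each source, some choice of $k$ will deliver the asserted $\mathcal Q_j$, which establishes that modification.

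With the $17$ relations in hand, the final clause follows from a single application of \autoref{lem:birational realization}: among the targets appearing in the listed modifications, the quatroids $\mathcal Q_{17}, \mathcal Q_{20}, \mathcal Q_{30}, \mathcal Q_{32}, \mathcal Q_{36}, \mathcal Q_{37}, \mathcal Q_{43}, \mathcal Q_{45}, \mathcal Q_{60}$ are all outside the $24$-orbit exceptional list of \autoref{lem:ABCreductions}, so their realization spaces are already known to be irreducible. Lemma \ref{lem:birational realization} then transfers this irreducibility to each of the eleven sources named in the ``in particular'' statement.

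The principal obstacle is the orbit matching at the end of the computational step: distinct quatroids may share the same numerical invariants (triple count, sextuple count, even the same line/conic incidence profile) so naive comparison by listing triples and sextuples can mislead. Delegating the $\mathfrak S_8$-canonicalization to the software infrastructure developed in \autoref{sec:Quatroids} is the practical route, and this is presumably how the corresponding proof files underlying \autoref{lem:ABCreductions} are generated.
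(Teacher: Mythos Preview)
Your proposal is correct and follows essentially the same approach as the paper: verify the hypotheses of \autoref{cor:ExhaustiveReduced} combinatorially, compute the modifications (the paper delegates this to the function \texttt{Modifications(Q)} in \texttt{Quatroids.jl}, whereas you spell out a concrete implementation via the rational representative and the Cayley--Bacharach point), and then invoke \autoref{lem:birational realization} together with \autoref{lem:ABCreductions} on the targets. The only cosmetic difference is that the paper notes more broadly that every target other than $\mathcal Q_{25}$ is already known irreducible, while you list only the nine targets needed for the eleven sources in the ``in particular'' clause.
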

\begin{proof}
All quatroids in the statement satisfy the conditions of \autoref{cor:ExhaustiveReduced}: this is verified computationally, see \autoref{tab:software}. The relation $\calQ \leadsto \calQ'$ is also verified computationally via the function \texttt{Modifications(Q)} in the package \texttt{Quatroids.jl}. Note that each quatroid on the right-hand-side of a $\leadsto$ sign, other than $\mathcal Q_{25}$, has already been shown to have irreducible realization space, therefore \autoref{lem:birational realization} guarantees that the corresponding quatroids on the left-hand-side have irreducible realization spaces. 
\end{proof}
The following result completes the proof of \autoref{thm:allirreducible}.
\begin{theorem}\label{thm: explicit cases irreducible}
The realization spaces of quatroids 
\[
 \mathcal Q_{21},
 \mathcal Q_{25},
 \mathcal Q_{38},
 \mathcal Q_{58},
 \mathcal Q_{101},
 \mathcal Q_{123}
 \]
are irreducible.
\end{theorem}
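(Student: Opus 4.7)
The plan is to dispatch each of the six quatroids individually by explicitly coordinatizing its realization space, in the spirit of the proof of \autoref{lem:Q41} for $\calQ_{41}$. Fix $\calQ = (\calI,\calJ)$ from the list. I would first locate four points $p_i$ of a realization which are in general linear position (such a subset exists because the underlying matroid is representable of rank three) and use $\PGL_3$ to normalize them to $[e_1]$, $[e_2]$, $[e_3]$, $[e_1+e_2+e_3]$. For each remaining point, I would use a single line condition from $\calI$ incident to it to reduce its two affine coordinates to one free parameter $z_j$; if no such line condition is available, I would leave its two coordinates free. In this way the entire configuration is encoded as a $3 \times 8$ matrix $A_\calQ(z_1,\dots,z_r)$ whose entries are polynomials in a small number of parameters.

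The remaining incidences then become polynomial equations in the $z_j$: unused line conditions from $\calI$ contribute $3 \times 3$ determinants of columns of $A_\calQ$, and each conic condition from $\calJ$ contributes the vanishing of the $6 \times 6$ determinant of the Veronese matrix whose rows are the six monomials $(x^2,xy,y^2,xz,yz,z^2)$ evaluated at the relevant columns. The inequations coming from the non-incidences of $\calQ$ are likewise explicit determinants that must be nonzero. The coordinate space $S_\calQ \subseteq \bbC^r$ is the resulting quasi-projective variety, and $\calS_\calQ$ is a $\PGL_3$-bundle over it; irreducibility of $\calS_\calQ$ is therefore equivalent to irreducibility of $S_\calQ$. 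The goal for each of the six cases is to exhibit $S_\calQ$ as birational to an open subset of an affine space $\bbA^s$, or failing that, to verify via computer algebra that the saturated ideal defining $S_\calQ$ is prime. In a favorable case the equations from the conic conditions either vanish identically or admit a single rational branch in the remaining parameters; this is in sharp contrast with $\calQ_{41}$, where the surviving equation $-z^2+z=1$ was an irreducible quadratic over $\bbQ$ with two Galois-conjugate roots.

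The main obstacle will be the quatroids with the most conic conditions, most notably $\calQ_{101}$ and $\calQ_{123}$, whose Veronese $6\times 6$ determinants produce high-degree relations among the parameters and where the saturation by the non-incidence inequations needs care to rule out spurious components. For these two, I would compute the primary decomposition of the relevant saturated ideal in \texttt{Macaulay2}, saturating by the inequations sequentially; the $\bbQ$-points listed in \texttt{RationalRepresentatives.txt} certify that at least one component is nonempty, so the task reduces to showing there are no others. For the lighter cases $\calQ_{21},\calQ_{25},\calQ_{38},\calQ_{58}$, I would expect the parametrization procedure to directly yield a birational isomorphism from an open subset of some $\bbA^s$ to $S_\calQ$, and I would record this parametrization explicitly. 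The same computational infrastructure developed for \autoref{lem:ABCreductions} and \autoref{lem:Modifications} can be reused to mechanize these six verifications and to produce a human-readable certificate for each.
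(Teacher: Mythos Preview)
Your plan is sound and would succeed in principle, but it diverges from the paper's argument in two notable ways.

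First, the normalization. You propose only the four-point $\PGL$ normalization, then impose conic conditions via $6\times 6$ Veronese determinants. The paper instead, for four of the six quatroids ($\calQ_{21},\calQ_{38},\calQ_{58},\calQ_{123}$), normalizes one of the \emph{conics} to be $x_0^2-x_1x_2$ and then parametrizes points on it as $(t:t^2:1)$; only $\calQ_{25}$ and $\calQ_{101}$ use the four-point frame. This replaces a degree-six determinantal condition by nothing at all for the points lying on that conic, and keeps the total degree of the remaining equations low. Without this trick your Veronese determinants will produce substantially larger polynomials, which is exactly the difficulty you anticipate for $\calQ_{101}$ and $\calQ_{123}$---but note that $\calQ_{21}$ and $\calQ_{58}$ also carry two conic conditions, so your ``lighter/harder'' split is not quite aligned with the actual complexity.

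Second, the irreducibility criterion. You aim either for an explicit rational parametrization of $S_\calQ$ or for a primary decomposition of the saturated ideal. The paper does neither: after the normalization it reduces each case to a single hypersurface in the parameter space and then checks that its singular locus has codimension at least three in the ambient space, which forces the hypersurface to be irreducible (two components would meet in a codimension-two locus contained in the singular set). This is cheaper than primary decomposition and sidesteps the question of rationality entirely. Your expectation that the ``lighter'' cases admit a direct birational map from affine space is not obviously justified---these six quatroids are precisely the ones resisting the reduction lemmas, so rationality is not a given---though your primary-decomposition fallback would still close the argument.
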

\begin{proof}
The auxiliary files include \texttt{Macaulay2} scripts to reproduce the computations in this proof.

We normalize elements of the realization space in two different ways. Either we normalize a conic conditions to be $x_0^2 - x_1x_2$ and then three points on such conic to be $[0:1:0],[0:0:1],[1:1:1]$, or we normalize four points in $\bbP^2_\bbC$ to be $[1:0:0],[0:1:0],[0:0:1],[1:1:1]$. After this normalization, a point in the realization space of quatroid $\calQ_i$ is represented by one of the following $3 \times 8$ matrices:
\begin{equation*}
\begin{array}{rlrl}
 A_{21} &= \left[\begin{array}{cccccccc}
0 & 0 & 0 & 1 & 1 & z_3 & z_4 & z_5 \\
1 & 0 & 1 & 1 & z_2 & z_3^2 & z_4^2 & z_5^2 \\
0 & 1 & z_1 & 1 & 1 & 1 & 1 & 1 \end{array}\right],
 &A_{25} &= \left[\begin{array}{cccccccc}
                 1 & 0 & 1 & 0 & 1 & 1 & z_3  &  z_4 \\ 
                 0 & 1 & z_1 & 0 & 0 & 1 & 1  &  z_5 \\ 
                 0 & 0 & 0 & 1 & z_2 & 1 & 1  &  z_6 
                 \end{array}
 \right], \\
~\\
A_{38} &= \left[\begin{array}{cccccccc}
           1&   1&    0&  0& 1&  z_3  & z_4& z_5\\
           1&   z_2&  1&  0& 1&  z_3^2 & z_4^2 & z_5^2\\
           z_1& z_1&  0&  1& 1&  1&     1& 1
                 \end{array}
 \right],
 &A_{58} &= \left[\begin{array}{cccccccc}
    1& 1  &  0& 0& z_1&    1& z_2&   z_3 \\  
    z_5& 1  &  1& 0& z_1^2&  1& z_2^2& z_3^2 \\
    z_4&z_4&  0& 1 & 1&     1& 1  & 1 
    \end{array} \right], \\
~\\
A_{101} &= \left[\begin{array}{cccccccc}
        1&0&1&1&0&1&0&z_5 \\
        0&1&z_1&z_2&0 &0&1&1 \\
        0&0&0&0&1&z_3&z_4&z_4 \\
        \end{array}
         \right], 
 & A_{123} &= \left[\begin{array}{cccccccc}
0 & 0 & 0 &   0   & 1 & z_3 & z_4 & z_5 \\
1 & 0 & 1 &   1   & 1 & z_3^2 & z_4^2 & z_5^2 \\
0 & 1 & z_1 & z_2 & 1 & 1     & 1 & 1 \end{array}\right],
\end{array}
\end{equation*}
where the vectors $(z_1 \vvirg z_r)$ are subject to the determinantal relations imposed by the line and conic condition of a quatroid. 

We verify in the auxiliary files that the additional conditions cut out an irreducible variety. To prove irreducibility, we reduce to a hypersurface case. To prove irreducibility of a hypersurface, we observe that its singular locus has codimension at least three (in the ambient space). 
\end{proof}

That every quatroid stratum other than $\mathcal S_{\mathcal Q_{41}}$ is irreducible guarantees that the notion of generic point makes sense on these strata. For $\mathcal S_{\mathcal Q_{41}}$, one may perform every computation exhaustively, since up to the $\PGL_2$-action, this realization space is just two complex conjugate points.

\section{The generic number of rational cubics through quatroid strata: a lower bound}
\label{sec:lowerbounds}

Recall from \autoref{def:dp} that for $\p \in \mathcal P$ the value $d_{\p}$ is the number of rational cubics through $\p$, counted with multiplicity. The irreducibility of a realization space $\calS_{\calQ}$ guarantees that this number is constant on a dense open subset of $\calS_{\calQ}$. Denote by $\mydef{d_\calQ}$ this generic value. Our goal is to compute $d_\calQ$ for all quatroids. 

In the case of $\mathcal Q_{41}$, it is classically known that, for every $\bfp \in \calS_{\calQ_{41}}$, $d_\bfp = 0$. Indeed, the singular cubics on the pencil $L_\bfp$ are four fully reducible cubics $f_1 \vvirg f_4$ each of the form $\ell_1\ell_2\ell_3$ with $\ell_1, \ell_2, \ell_3$ linearly independent. We refer, for instance, to \cite{ArtDol:HessePencil} for details. 

Let $\mydef{\frakB}$ be the set of B\'ezoutian quatroids. By \autoref{prop:FibreIsFinite}, if $\calQ \notin \frakB$, then $d_\calQ = 0$. For B\'ezoutian quatroids, we obtain a lower bound on $d_\calQ$ via a semicontinuity argument. 

\begin{lemma}
\label{lem:semicontinuityofcardinality}
The function $\p \mapsto d_{\p}$ is lower semicontinuous. 
\end{lemma}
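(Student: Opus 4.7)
The plan is to prove the statement pointwise. For any $\bfp_0 \in \calP$ violating \autoref{prop:FibreIsFinite}, \autoref{prop:FibreHas12Points} gives $d_{\bfp_0} = 0$, so lower semicontinuity at $\bfp_0$ is automatic since $d_\bfp \geq 0$ everywhere. Hence the only substantive case is $\bfp_0$ satisfying \autoref{prop:FibreIsFinite}, where I need to show $d_\bfp \geq d_{\bfp_0}$ for $\bfp$ in a neighborhood of $\bfp_0$. Note that the conditions of \autoref{prop:FibreIsFinite} are Zariski-open (their negations — four collinear points or seven conconic points — are closed conditions), so in a neighborhood $U$ of $\bfp_0$ the line $L_\bfp$ remains a pencil meeting $\calD$ in a $0$-dimensional subscheme of total degree $12$.

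The key tool is conservation of intersection multiplicity in a flat family. Over $U$, consider the incidence variety
\[
\calY = \{(\bfp, C) \in U \times \calD : C \in L_\bfp\}.
\]
The projection $\calY \to U$ is finite and, since its fibers have constant length $12$, flat (by, e.g., the local criterion of flatness on a reduced base). For each irreducible cubic $C_0 \in L_{\bfp_0} \cap \calC$, choose an open analytic ball $B_{C_0} \subset \bbP S^3 \bbC^3$ such that (i) $B_{C_0}$ meets $L_{\bfp_0} \cap \calD$ only at $C_0$, and (ii) $B_{C_0} \cap \calD \subseteq \calC$. Condition (ii) is arrangeable because $\calC$ is open in $\calD$. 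The standard conservation of intersection number in the flat family $\calY \to U$ then implies that, after possibly shrinking $U$, for all $\bfp \in U$ the scheme $L_\bfp \cap \calD \cap B_{C_0}$ has total degree equal to $\imult_{C_0}(L_{\bfp_0}, \calD)$. Every point of this scheme lies in $\calC$ by (ii), and thus contributes to $d_\bfp$.

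Shrinking $U$ further so that the finitely many balls $B_{C_0}$ (one for each $C_0 \in L_{\bfp_0} \cap \calC$) are pairwise disjoint, summing over $C_0$ yields
\[
d_\bfp \geq \sum_{C_0 \in L_{\bfp_0} \cap \calC} \imult_{C_0}(L_{\bfp_0}, \calD) = d_{\bfp_0},
\]
which is the desired lower semicontinuity. The main conceptual point — and the only place that could be delicate — is the conservation of intersection multiplicity, which rests on the flatness of $\calY \to U$; this in turn reduces to checking that the fibers of $\calY \to U$ all have length exactly $12$, a uniformity already secured by restricting to the open locus where \autoref{prop:FibreIsFinite} holds together with \autoref{prop:FibreHas12Points}.
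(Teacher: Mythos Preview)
Your proof is correct and rests on the same underlying fact as the paper's: the locus $\calC$ of irreducible cubics is open in $\calD$, so intersection multiplicity supported there can only go up under specialization. The paper, however, argues via the complement. It observes that $r_\bfp = 12 - d_\bfp$ is the degree of the subscheme of $L_\bfp \cap \calD$ supported on the \emph{closed} locus of reducible cubics, invokes the standard fact that this is upper semicontinuous, and concludes. Your version unpacks the machinery explicitly (flatness of the length-$12$ family, analytic balls inside $\calC$, conservation of number), which makes the proof self-contained but longer; the paper's dual phrasing buys brevity by citing upper semicontinuity of support on a closed set as a black box. One minor point: your argument as written establishes lower semicontinuity in the analytic topology, whereas the paper's formulation more naturally yields Zariski semicontinuity; for the applications in \autoref{cor:lowerbounds} either suffices, but if you want the Zariski statement you should either note that the level sets are constructible or rephrase using the Zariski-closed reducible locus as the paper does.
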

\begin{proof} 
By \autoref{prop:FibreHas12Points}, $L_\bfp \cap \calD$ is either infinite, in which case $d_\bfp = 0$, or a $0$-dimensional scheme of degree $12$. The number $d_{\p}$ counts, with multiplicity, the number of intersection points $L_{\p} \cap \mathcal D$ not occurring on the variety of reducible cubics. Equivalently $r_\bfp = 12 - d_\bfp$ is the degree of the subscheme of $L_\bfp \cap \calD$ supported on the subvariety of reducible cubics. Since this subvariety is closed, upper semicontinuity of $r_\bfp$ follows and we conclude that $d_\bfp$ is lower semicontinuous.
\end{proof}

\begin{corollary}
\label{cor:lowerbounds}
Let $\mathcal Q$ be a quatroid. The number $\max_{\p \in \mathcal S_{\mathcal Q}}(d_{\p})$ is well-defined and equal to $d_{\mathcal Q}$. In particular, for any $\p \in \overline{\mathcal S_{\mathcal Q}}$ we have that $d_\p \leq d_{\mathcal Q}$. Consequently, $d_{\mathcal Q'} \leq d_{\mathcal Q}$ if $\mathcal Q' \leq \mathcal Q$.
\end{corollary}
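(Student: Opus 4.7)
The plan is to combine the lower semicontinuity of $\bfp \mapsto d_\bfp$ established in \autoref{lem:semicontinuityofcardinality} with the irreducibility of the realization spaces shown in \autoref{thm:allirreducible}. First I would observe that since $d_\bfp$ takes only finitely many integer values (in $\{0, 1, \ldots, 12\}$ by \autoref{prop:FibreHas12Points}), the quantity $\max_{\bfp \in \calS_\calQ} d_\bfp$ is automatically well-defined. The real content of the statement is then the identification of this maximum with the generic value $d_\calQ$.

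The main step is the following argument, carried out for any $\calQ \neq \calQ_{41}$ so that $\calS_\calQ$ is irreducible. By definition of $d_\calQ$, there is an open dense subset $U \subseteq \calS_\calQ$ on which $d_\bfp \equiv d_\calQ$. Lower semicontinuity makes $\{\bfp \in \calP : d_\bfp > d_\calQ\}$ open in $\calP$, hence open when intersected with $\calS_\calQ$, and it is disjoint from $U$. Since $\calS_\calQ$ is irreducible, any nonempty open subset must intersect the dense open $U$, so this locus is empty. Hence $d_\bfp \leq d_\calQ$ on $\calS_\calQ$, and the maximum is attained and equals $d_\calQ$. For $\calQ_{41}$ I would separately invoke the classical fact recalled at the start of \autoref{sec:lowerbounds}: $d_\bfp = 0 = d_{\calQ_{41}}$ on every point of $\calS_{\calQ_{41}}$, so the statement is immediate.

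The closure assertion then follows painlessly: the sublevel set $\{\bfp \in \calP : d_\bfp \leq d_\calQ\}$ is closed in $\calP$ by lower semicontinuity and contains $\calS_\calQ$ by the previous step, hence contains $\overline{\calS_\calQ}$. The consequence for the stratification order is a one-line deduction: if $\calQ' \leq \calQ$ then $\calS_{\calQ'} \subseteq \overline{\calS_\calQ}$, so $d_\bfp \leq d_\calQ$ for every $\bfp \in \calS_{\calQ'}$, and taking the maximum over $\calS_{\calQ'}$ yields $d_{\calQ'} \leq d_\calQ$.

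I would not expect a genuine obstacle here: the corollary is a clean consequence of the preceding semicontinuity and irreducibility results. The only mildly delicate point is the separate treatment of $\calQ_{41}$, whose realization space is reducible, but the classical vanishing $d_\bfp = 0$ on that locus disposes of it immediately.
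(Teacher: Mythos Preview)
Your proposal is correct and follows precisely the approach the paper intends: the corollary is stated without proof, as an immediate consequence of the lower semicontinuity established in \autoref{lem:semicontinuityofcardinality} together with the irreducibility of strata from \autoref{thm:allirreducible}, and your argument cleanly fills in exactly these details, including the separate treatment of $\calQ_{41}$.
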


A consequence of \autoref{cor:lowerbounds} is that any point $\bfp \in \calS_{\calQ}$ provides a lower bound $d_\bfp \leq d_\calQ$. We compute the number $d_\bfp$ for the rational representatives in \texttt{RationalRepresentatives.txt} using a symbolic calculation. We now discuss how to perform that calculation.

\begin{algorithm}[!htpb]
\caption{Symbolic Computation of $d_{\p}$}\label{algo:symbolicrational}
\SetKwIF{If}{ElseIf}{Else}{if}{then}{elif}{else}{}%
\DontPrintSemicolon
\SetKwProg{Number of rational cubics}{Number of rational cubics}{}{}
\LinesNotNumbered
\KwIn{A rational configuration $\p=(p_1,\ldots,p_8) \in \mathcal P$ representing a B\'ezoutian quatroid}
\KwOut{$d_{\p}$ } 
\nl \textbf{initialize }$I$ to be the ideal of the points $p_1,\ldots,p_8 \in \mathcal P$\;
\nl Find a rational basis $\{C_0,C_1\}$ for $I$ in degree $3$\;
\nl Set $F=\textrm{disc}(t_0C_0+t_1C_1) \in \mathbb{Q}[t_0,t_1]$\;
\nl Factor $F$ over $\mathbb{Q}$\\
Optionally, determine the multiplicities of each solution from the powers of the factors\;
\nl Let $R$ be the $k$ linear factors of $F$ with roots $\{[t_0^{(i)}:t_1^{(i)}]\}_{i=1}^k$ \;
\nl For each $i=1,\ldots,k$ factor the cubic $t_0^{(i)}C_0 + t_1^{(i)}C_1$\;
\nl Let $r_\p$ be the number of cubics which factor nontrivially, counted with multiplicity \\
 Optionally, compute the type of each reducible (see \autoref{tab:orbits})\; 
\nl \Return{$12-r_\p$}\;
\end{algorithm} 
\renewcommand{\algocfautorefname}{Algorithm}

\begin{lemma}
\label{lem:reduciblesmustfactoroverQ}
Let $\p \in \mathcal P $ be a rational representative of a B\'ezoutian quatroid. Let $C$ be a reducible cubic through $\p$ defined by a cubic form $f$. Then $f \in \mathbb{P}{S}^3\mathbb{Q}^3$ and it factors over $\mathbb{Q}$. 
\end{lemma}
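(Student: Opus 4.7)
The plan is to invoke \autoref{lem:reducibleCubicsThroughp}, which gives an explicit combinatorial description of every reducible cubic through $\p$, and to prove that each irreducible factor of $f$ already has rational coefficients. Two elementary observations drive the argument: (i) a line through at least two distinct $\mathbb{Q}$-rational points is rational, since its defining equation spans the one-dimensional kernel of a rank-two $2\times 3$ matrix over $\mathbb{Q}$; and (ii) a conic through at least five distinct $\mathbb{Q}$-rational points, no four of which are collinear, is uniquely determined and hence rational by the same sort of rational-kernel argument. Uniqueness in (ii) follows from B\'ezout's Theorem: two distinct conics through such a set of points would share a common line component, producing four collinear points contrary to hypothesis.

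Since $\p$ represents a B\'ezoutian quatroid, \autoref{lem:havingcubicsimpliesbezoutian} together with \autoref{prop:FibreIsFinite} guarantees that no four points of $\p$ are collinear and no seven lie on a conic. In each of the three cases of \autoref{lem:reducibleCubicsThroughp} I would then apply (i) and (ii) directly: in case (a), $\ell_I$ passes through three rational points and $q_{I^c}$ through five with no four collinear; in case (b), each of the three lines passes through three rational points; in case (c), $q_J$ passes through six rational points (again no four collinear) and $\ell_{J^c}$ through two.

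A subtlety arises when the conic factor in case (a) or (c) is itself reducible as a union of two lines. Here the pigeonhole principle, combined with the upper bound of three points on any line, forces the five or six points to split as $3+2$ or $3+3$ across the two line components. Since $\p$ represents $\mathcal{Q} = (\mathcal{I}, \mathcal{J})$, every triple of collinear $\p$-points must be indexed by some element of $\mathcal{I}$, so each line component is rational by (i), and the other line is then rational as the quotient of the rational conic by a rational line.

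The main technical step will be the pigeonhole case analysis handling reducible conic factors and verifying the precise split of points; once that is in place, the proof reduces to the two elementary observations, which themselves rely only on B\'ezout's Theorem and linear algebra over $\mathbb{Q}$. Rationality of each irreducible factor then yields both $f \in \mathbb{P}S^3\mathbb{Q}^3$ and the factorization over $\mathbb{Q}$.
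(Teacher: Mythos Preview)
Your proposal is correct and follows essentially the same strategy as the paper: decompose the reducible cubic into its irreducible components and verify that each passes through enough of the rational points of $\p$ to be uniquely determined by a rational linear system, hence defined over $\mathbb{Q}$. The paper organizes the case split directly as ``three lines'' versus ``line plus irreducible conic'' with a pigeonhole count, whereas you route through the three cases of \autoref{lem:reducibleCubicsThroughp}; your extra handling of a possibly reducible conic factor is harmless but redundant, since $q_J$ is irreducible by the definition of $\mathcal J$ and a reducible $q_{I^c}$ in case~(a) simply places the cubic in case~(b) of that lemma.
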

\begin{proof}
Since $C$ is a reducible cubic, then $C$ is a union of either (a) three lines or (b) a line and a conic. In case (a), since $\p$ represents a B\'ezoutian quatroid, then each line contains (at least) two elements of $\bfp$ by the pigeonhole principle. In particular, since $\bfp$ is rational, each of the three lines has a rational equation. Similarly, in case (b), since $\p$ is B\'ezoutian, the pigeonhole principal implies that there are at least two points on the line and at least five points on the conic. The equation of a conic through five rational points, as well as the equation of a line through two rational points, has rational coefficients. Consequently, in either (a) or (b), the components of $C$ must be realized by rational forms whose product is $f$, up to scaling.
\end{proof}

\autoref{algo:symbolicrational} uses \autoref{lem:reduciblesmustfactoroverQ} to find the reducible cubics, and their multiplicities, on a pencil.

\begin{proposition}
 \autoref{algo:symbolicrational} is correct, in the sense that its output is the number $d_\bfp$. Moreover, the optional steps correctly determine the multiplicities of each cubic through $\p$ and the type of each reducible cubic in the sense of \autoref{tab:orbits}.
\end{proposition}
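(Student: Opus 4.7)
The plan is to check each step of \autoref{algo:symbolicrational} in turn, using \autoref{prop:FibreHas12Points} to reduce the task to computing $r_\bfp = 12 - d_\bfp$ correctly. By \autoref{lem:havingcubicsimpliesbezoutian} and \autoref{prop:FibreIsFinite}, the hypothesis that $\bfp$ represents a B\'ezoutian quatroid guarantees that $L_\bfp$ is a pencil and $L_\bfp \cap \calD$ is a $0$-dimensional scheme of degree $12$. In Steps (1)--(2), the ideal $I$ of $\bfp$ is generated over $\bbQ$ because $\bfp \in (\bbP^2_\bbQ)^8$, so the degree-three part $I_3 \subseteq S^3\bbQ^3$ is a two-dimensional $\bbQ$-vector space admitting a rational basis $\{C_0,C_1\}$.

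In Steps (3)--(4), we use that $\calD \subseteq \bbP S^3\bbC^3$ is cut out by the irreducible discriminant polynomial $\disc$ of degree $12$ (see \cite[Example I.4.15]{GKZ:DiscResMultDet}). Restricting $\disc$ to the pencil via the affine parametrization $[t_0:t_1] \mapsto t_0C_0 + t_1C_1$ produces the binary form $F \in \bbQ[t_0,t_1]$ of degree $12$, which is nonzero since $L_\bfp \not\subseteq \calD$. By the standard scheme-theoretic description of intersection multiplicity of a line with a hypersurface, the order of vanishing of $F$ at a root $[t_0^{(i)}:t_1^{(i)}]$ equals $\imult_{C_i}(L_\bfp,\calD)$, where $C_i = t_0^{(i)}C_0 + t_1^{(i)}C_1$. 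Hence factoring $F$ over $\bbQ$ partitions $L_\bfp \cap \calD$ into Galois orbits with multiplicities given by the exponents in the factorization.

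Steps (5)--(7) rest on the observation that a singular cubic $C \in L_\bfp$ has a rational defining form (up to scalar) if and only if its pencil parameter $[t_0:t_1]$ lies in $\bbP^1_\bbQ$, equivalently corresponds to a linear factor of $F$ over $\bbQ$. By \autoref{lem:reduciblesmustfactoroverQ}, every reducible cubic in $L_\bfp$ has a defining form that factors over $\bbQ$; in particular, every reducible cubic corresponds to a linear $\bbQ$-factor of $F$. For each such cubic, a univariate factorization over $\bbQ$ tests reducibility and, when it succeeds, returns the line-and-conic or three-line decomposition. Summing the multiplicities of the factors corresponding to reducible cubics gives $r_\bfp$, and returning $12 - r_\bfp$ yields $d_\bfp$ by \autoref{prop:FibreHas12Points}. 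For the optional claims, the multiplicity of each singular cubic is read as the exponent of its linear factor in $F$, and the type of each reducible cubic in the sense of \autoref{tab:orbits} is read directly from its factorization into irreducible components over $\bbQ$, the coincidences among those components being detected by inspecting which factors are proportional.

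The main obstacle is justifying the equality between the algebraic multiplicity of a root of $F$ and the scheme-theoretic intersection multiplicity $\imult_{C_i}(L_\bfp,\calD)$; this hinges on the fact that $\calD$ is scheme-theoretically defined by the irreducible degree-$12$ polynomial $\disc$, so that its pullback along the pencil parametrization is exactly $F$ as a divisor on $L_\bfp \cong \bbP^1$. A secondary care point is ensuring that nonrational roots of $F$ are correctly excluded from the reducibility test, which is immediate from \autoref{lem:reduciblesmustfactoroverQ}: any such root corresponds to an irreducible singular cubic, i.e., a rational cubic, which must be included in $d_\bfp$ rather than $r_\bfp$.
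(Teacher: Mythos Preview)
Your proof is correct and follows essentially the same approach as the paper: both reduce to computing $r_\bfp$ via \autoref{prop:FibreHas12Points}, use that the pencil has a rational basis, identify intersection multiplicities with root multiplicities of the restricted discriminant $F$, and invoke \autoref{lem:reduciblesmustfactoroverQ} to ensure every reducible cubic appears as a $\bbQ$-linear factor of $F$ whose $\bbC$-factorization is already visible over $\bbQ$. Your Galois-orbit phrasing is the same content as the paper's appeal to separability of $\bbQ$ (no irreducible $\bbQ$-factor has a repeated root, so its exponent is the common multiplicity of its roots).

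One small point worth tightening: your claim that the type is ``read directly from its factorization\ldots the coincidences among those components being detected by inspecting which factors are proportional'' is not quite enough in general. Proportionality of factors distinguishes triple lines and double-line$+$transverse from the rest, but a triangle and an asterisk both factor as three distinct linear forms, and a conic$+$secant and a conic$+$tangent both factor as a conic times a distinct line; distinguishing these requires an additional check (concurrency of the three lines, or tangency of the line to the conic). The paper handles this by computing the singular locus rather than relying on the factor pattern alone. This does not affect correctness of the main output $d_\bfp$, only the optional type determination.
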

\begin{proof}
Consider a rational configuration $\p \in \mathcal S_{\mathcal Q}$  for a B\'ezoutian quatroid $\calQ$. By \autoref{prop:FibreIsFinite}, $L_\bfp = \langle C_1,C_2\rangle$ and since $\bfp$ is rational the cubics $C_1,C_2$ can be taken to have rational coefficients. Step $3$ parametrizes the pencil $L_\bfp$ and evaluates the discriminant of plane cubics, $\disc$, on it. By the separability of $\mathbb{Q}$, no irreducible factor has a multiple root, and so the multiplicities of the linear factors over $\mathbb{C}$ are witnessed by the powers of the irreducible factors over $\mathbb{Q}$. This shows that the first optional step correctly detects the multiplicities.

By \autoref{lem:reduciblesmustfactoroverQ} the reducible cubics through rational configurations are themselves defined over $\bbQ$, hence all of them appear in the $\mathbb{Q}$-factorization of $F$ as linear factors. Moreover \autoref{lem:reduciblesmustfactoroverQ} also states that the $\mathbb{C}$-factorization of each such reducible cubic is realized over $\mathbb{Q}$. Consequently, those which factor nontrivially are precisely the reducible cubics on $L_\bfp$, counted with the appropriate multiplicity, and so their number is $r_{\p}$. Since $12=d_\p+r_\p$, the number $d_{\p}$ is correctly determined. The type of each individual reducible cubic may be determined in several elementary ways. In \texttt{countReducibleCubics.m2}, we determine the type via their singular locus.
\end{proof}

\begin{theorem}
\label{thm:lowerbounds}
The following numbers are lower bounds for $d_{\mathcal Q_i}$, where $\mathcal Q_i$ is a B\'ezoutian quatroid:

\begin{center}
{{{\footnotesize{
\begin{tabular}{|r||c|c|c|c|c|c|c|c|c|c|c|c|c|c|c|c|c|c|c|c|} 
\hline
\begin{tabular}{c}  $i\,\,\,$ \end{tabular} & 1 & 2 & 3 & 4 & 5 & 
8 & 9 & 10 & 11 & 12 & 
13 & 15 & 16 & 17 & 18 & 
19 & 20 & 21 & 22 & 24  \\ \hline 
\begin{tabular}{c} {\color{white}{.}}\\ $d_{\mathcal \p_i}$ \\ {\color{white}{.}}\end{tabular} & 12 & 10 & 8 & 6 & 4 & 10 & 8 & 9 & 6 & 7 & 4 & 8 & 6 & 6 & 7 & 4 & 4 & 5 & 2 & 6  \\ \hline \hline 
&25 & 26 & 28 & 29 & 30 & 
31 & 32 & 33 & 34 & 35 & 
36 & 37 & 38 & 39 & 40 & 
41 & 42 & 43 & 44 & 45  \\  \hline 
\begin{tabular}{c}{\color{white}{$d_{\mathcal \p_i}$}} \\ {\color{white}{.}}\end{tabular}&4 & 5 & 4 & 3 & 2 & 1 & 0 & 0 & 2 & 1 & 1 & 3 & 2 & 2 & 1 & 0 & 3 & 3 & 4 & 5 \\ \hline \hline 
&46 & 47 & 48 & 49 & 50 & 
51 & 52 & 53 & 54 & 55 & 
56 & 57 & 58 & 59 & 60 & 
61 & 62  & 64 & 65 &66 \\ \hline 
\begin{tabular}{c}{\color{white}{$d_{\mathcal \p_i}$:}} \\ {\color{white}{.}}\end{tabular}&3 & 4 & 4 & 2 & 4 & 2 & 3 & 1 & 6 & 4 & 5 & 2 & 3 & 0 & 4 & 2 & 0  & 5 & 3 &1\\ \hline \hline  
 & 67 & 68 & 69 & 70 & 
71 & 72 & 73 & 74 & 75 & 
76 & 77 & 78 & 79 & 80 & 
81 & 82 &    &    &     &\\ \hline 
\begin{tabular}{c}{\color{white}{$d_{\mathcal \p_i}$:}} \\ {\color{white}{.}}\end{tabular} & 5 & 6 & 4 & 7 & 5 & 6 & 3 & 6 & 4 & 2 & 8 & 6 & 9 & 7 & 5 & 3 & & & &\\ \hline \hline 
\end{tabular}
}}}}
\end{center}
Moreover, for each rational representative $\p_i$, the irreducible cubics through $\p_i$ all appear with multiplicity one and the reducible cubics are either conic+secant lines or triangles (see \autoref{tab:orbits}).
\end{theorem}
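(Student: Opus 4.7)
The plan is to apply \autoref{algo:symbolicrational} to each rational representative $\bfp_i$ listed in the auxiliary file \texttt{RationalRepresentatives.txt}, and then invoke \autoref{cor:lowerbounds} to obtain $d_{\bfp_i} \leq d_{\calQ_i}$. First, for each B\'ezoutian quatroid $\calQ_i$ with $i$ appearing in the table, I would confirm that the stored matrix $\bfp_i$ consists of eight distinct points and satisfies the line and conic conditions of $\calQ_i$ but no others; this is precisely what the companion \texttt{Macaulay2} script used in \autoref{thm:representability} verifies.

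Then, for each such $\bfp_i$, I would execute the steps of \autoref{algo:symbolicrational}: compute a basis $\{C_0, C_1\}$ of the degree-$3$ component of the ideal of $\bfp_i$ over $\bbQ$, form the discriminant $F = \disc(t_0 C_0 + t_1 C_1) \in \bbQ[t_0,t_1]$, and factor $F$ into irreducibles over $\bbQ$. Since each $\bfp_i$ represents a B\'ezoutian quatroid, \autoref{prop:FibreIsFinite} guarantees that $L_{\bfp_i}$ is a pencil and $L_{\bfp_i} \cap \calD$ has degree $12$, so $F$ is a nonzero binary form of degree $12$. By \autoref{lem:reduciblesmustfactoroverQ}, every reducible cubic in $L_{\bfp_i}$ has a defining form in $S^3\bbQ^3$ that factors over $\bbQ$; hence the reducibles correspond to (possibly repeated) linear factors of $F$ whose associated cubic $t_0^{(j)}C_0 + t_1^{(j)}C_1$ splits nontrivially over $\bbQ$. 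Summing the multiplicities of these factors yields $r_{\bfp_i}$, and $d_{\bfp_i} = 12 - r_{\bfp_i}$ is the entry recorded in the table. \autoref{cor:lowerbounds} then promotes each equality $d_{\bfp_i} = k$ into the desired inequality $d_{\calQ_i} \geq k$.

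For the two additional claims, I use the by-product information produced by \autoref{algo:symbolicrational}. Because $\bbQ$ is separable, the multiplicity of a root of $F$ at $[t_0^{(j)}:t_1^{(j)}]$ equals the exponent of the corresponding linear factor over $\bbQ$ of any irreducible factor of $F$ having that root; reading off these exponents for the factors that correspond to \emph{irreducible} cubics shows directly that each irreducible cubic through $\bfp_i$ occurs in $L_{\bfp_i} \cap \calD$ with multiplicity one. For each reducible $t_0^{(j)}C_0 + t_1^{(j)}C_1$ I inspect its $\bbQ$-factorization: the factorization type (three linear factors vs.\ a linear and a quadratic factor) determines whether the cubic is a triangle or a conic+line, and one verifies that every reducible cubic obtained lies in the orbits listed in \autoref{tab:orbits} as conic+secant or triangle rather than in any degenerate type with tangent or multiple components.

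The main obstacle is purely computational: the procedure must be carried out for each of the $76$ B\'ezoutian orbits and the output matched against the table. This is automated by the script \texttt{countReducibleCubics.m2} (cf.\ \autoref{tab:software}), which for each representative computes $F$, factors it, classifies each root as irreducible or reducible, and in the reducible case identifies the type via the singular locus. The mathematical content of the argument is entirely contained in \autoref{prop:FibreIsFinite}, \autoref{prop:FibreHas12Points}, \autoref{lem:reduciblesmustfactoroverQ}, and \autoref{cor:lowerbounds}; no further ingredient is required beyond executing the algorithm and recording the result.
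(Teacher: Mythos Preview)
Your proposal is correct and follows essentially the same approach as the paper: apply \autoref{algo:symbolicrational} to the rational representatives from \texttt{RationalRepresentatives.txt}, invoke \autoref{cor:lowerbounds} for the lower bound, and use the optional steps (implemented in \texttt{countReducibleCubics.m2}) to read off multiplicities and reducible types. One small omission: $\calQ_{41}$ has no $\bbQ$-representative, so the algorithm cannot be run on it; the paper handles this by explicitly excluding $i=41$ and appealing to the classical fact (noted earlier in the section) that $d_\bfp = 0$ for every $\bfp \in \calS_{\calQ_{41}}$, which trivially gives the tabulated lower bound of $0$.
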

\begin{proof}
For each quatroid $\mathcal Q_i$, with $i \neq 41,63$, we apply \autoref{algo:symbolicrational} to  the $\mathbb{Q}$-representative $\p_i \in \mathcal S_{\mathcal Q_i}$ in \texttt{RationalRepresentatives.txt}. \autoref{cor:lowerbounds} implies the results are lower bounds for $d_{{\calQ_i}}$. The optional steps are taken to determine the types and multiplicities of all cubics. 
\end{proof}

\begin{example}

\begin{figure}[!htpb]
\includegraphics[scale=0.4]{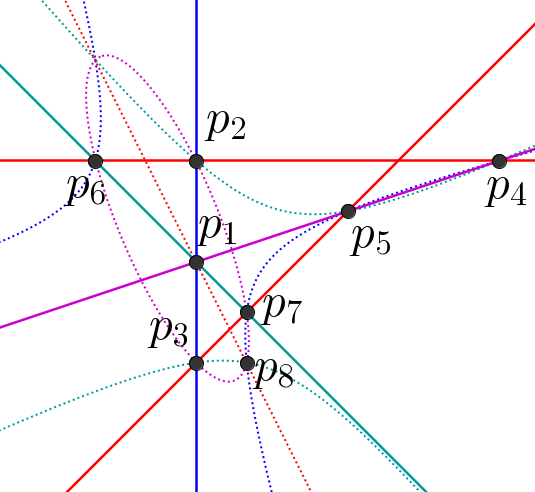}
\caption{The four reducible cubics through the configuration $\p$. The bold figures are the five lines imposed by $\mathcal Q_{43}$. Each set of figures of the same color union to be one of the four reducible cubics through $\p$. }
\label{fig:SymbolicCalculation}
\end{figure}

We detail \autoref{algo:symbolicrational} on the representative 
\[
\p=\left[\!\begin{array}{cccccccc}
      -2&2&2&1&-2&1&2&-2\\
      2&-2&-2&2&-1&-2&-1&1\\
      0&2&-2&1&-1&1&-1&2
      \end{array}\!\right]\]
 of $\mathcal Q_{43}=(\{123,145,167,246,357\},\{\})$. A basis for the ideal of $\bfp$ in degree $3$ is  given by the cubics
 \begin{align*}
 C_0&= 2x_0^2x_1+2x_0x_1^2-2x_0^2x_2-3x_0x_1x_2-2x_1^2x_2+x_0x_2^2+x_1x_2^2+x_2^3\\
 C_1&=8x_0^3+8x_1^3+8x_0^2x_2-3x_0x_1x_2+8x_1^2x_2-29x_0x_2^2-29x_1x_2^2-19x_2^3   \end{align*}
Evaluating the cubic discriminant on the pencil $t_0C_0+t_1C_1$ and factoring it yields
\[
\left(t_{1}\right)^{3}\left(t_{0}-19\,t_{1}\right)^{2}\left(t_{0}-13\,t
      _{1}\right)^{2}\left(t_{0}-5\,t_{1}\right)^{2}\left(375\,t_{0}^{3}-6\,047
      \,t_{0}^{2}t_{1}-25\,539\,t_{0}t_{1}^{2}+232\,083\,t_{1}^{3}\right).
\]
We see that there are at most three reducible cubics which occur with multiplicity $2$ and at most one reducible cubic occurring with multiplicity $3$. Since the nonlinear factor appears with multiplicity one, its $\mathbb{C}$ factors appear with multiplicity one as well.
By factoring $C_{[t_0:t_1]} = t_0C_0+t_1C_1$ for $[t_0:t_1] \in \{[0:1],[19:1],[13:1],[5:1]\}$ we see that all these cubics are reducible:
\begin{align*}
C_{[0:1]} &=\left(x_{1}-x_{2}\right)\left(x_{0}-x_{2}\right)\left(2\,x_{0}+2\,x_{1}+
      x_{2}\right)\\
C_{[19:1]} &= \left(x_{0}+x_{1}\right)\left(4\,x_{0}^{2}+15\,x_{0}x_{1}+4\,x_{1}^{2}-15\,x_{0}x_{2}-15\,x_{1}x_{2}-5\,x_{2}^{2}\right)\\
C_{[13:1]} &= \left(x_{0}+x_{1}-3\,x_{2}\right)\left(4\,x_{0}^{2}+9\,x_{0}x_{1}+4\,x_{1}^{2}+3\,x_{0}x_{2}+3\,x_{1}x_{2}+x_{2}^{2}\right)\\
C_{[5:1]} &= \left(x_{0}+x_{1}+x_{2}\right)\left(4\,x_{0}^{2}+x_{0}x_{1}+4\,x_{1}^{2}-5\,x_{0}x_{2}-5\,x_{1}x_{2}-7\,x_{2}^{2}\right)
\end{align*} The first is the union of three lines with no mutual intersection (i.e.\ a triangle). The remaining three reducible cubics are conic+secant lines. As a result, $d_{\p} = 12 - 3 - 3\cdot 2 = 3$. We display the configuration $\p$ and these reducible cubics in \autoref{fig:SymbolicCalculation}.
\end{example}

\section{The generic number of rational cubics through quatroid strata: an upper bound}\label{sec:expectedintersections}
In this section, we construct upper bounds for each $d_{\mathcal Q}$, and observe that they coincide with the lower bounds listed in \autoref{thm:lowerbounds}. Bounding $d_\mathcal Q$ from above is equivalent to bounding $r_{\p}=12-d_{\p}$ from below, for every $\p \in \mathcal S_{\mathcal Q}$. The value $r_{\p}$ occurs as a sum of intersection multiplicities, which is bounded by a sum of \emph{multiplicities} in the sense of \eqref{eq:multiplicity}:
\begin{equation}
\label{eq:rpversusepsilonp}
\begin{aligned}
r_{\p} = \sum_{\textrm{reducible } C \in L_{\p} \cap \mathcal D} \textrm{imult}_{C}(L_{\p},\mathcal D) &\geq  \sum_{\textrm{reducible } C \in L_{\p} \cap \mathcal D} \textrm{min}_{C \in L \in \textrm{Gr}(2,S^3 \bbC^3)}(\textrm{imult}_C(L,\mathcal D))  \\ &=  \sum_{\textrm{reducible } C \in L_{\p} \cap \mathcal D} \textrm{mult}_{\mathcal D}(C) =: \mydef{m_{\p}}.
\end{aligned}
\end{equation}
Hence, we obtain the bound $d_{\p} =12-r_{\p} \leq  12-m_{\p}$. Define $\mydef{m_{\mathcal Q}}=\min_{\p \in \mathcal S_{\mathcal Q}}(m_{\p})$.
\begin{lemma}
\label{lem:genericepsilonp}
Let $\mathcal Q$ be a B\'ezoutian quatroid. The value $m_{\mathcal Q}$ is well-defined and equal to $m_\p$ for generic $\p \in \mathcal S_{\mathcal Q}$.
\end{lemma}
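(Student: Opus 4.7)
The plan is to show that $\p \mapsto m_\p$ is an upper semicontinuous function $\mathcal S_\mathcal Q \to \mathbb{Z}_{\geq 0}$, and then invoke irreducibility of $\mathcal S_\mathcal Q$ (\autoref{thm:allirreducible}) to conclude that this function attains its minimum on a dense open subset. This simultaneously yields the well-definedness of $m_\mathcal Q$ and the identification $m_\mathcal Q = m_\p$ for generic $\p \in \mathcal S_\mathcal Q$.

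First I would exhibit each reducible cubic through a configuration $\p \in \mathcal S_\mathcal Q$ as the image of $\p$ under an explicit morphism to $\mathcal D$. By \autoref{lem:reducibleCubicsThroughp}, such reducible cubics are enumerated by three finite families of combinatorial indices read off from $\mathcal Q$: each $I \in \mathcal I$ produces $\ell_I \cdot q_{I^c}$, each disjoint pair $I_1, I_2 \in \mathcal I$ produces the triangle $\ell_{I_1} \cdot \ell_{I_2} \cdot \ell_{(I_1 \cup I_2)^c}$, and each $J \in \mathcal J$ produces $q_J \cdot \ell_{J^c}$. For each such index $T$, the assignment $\p \mapsto C_T(\p) \in \mathcal D$ is a morphism, because the line through a collinear triple (or through two prescribed points) and the quadric through five or six prescribed points depend polynomially on those points.

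Next I would invoke upper semicontinuity of $\mult_\mathcal D$. From \autoref{eq:multiplicity} together with the observation (noted just after that equation) that a generic line through $C$ realizes this minimum, $\mult_\mathcal D(C)$ coincides with the usual local algebraic multiplicity of the hypersurface $\mathcal D$ at $C$, so each locus $\{C \in \mathcal D : \mult_\mathcal D(C) \geq k\}$ is closed. Composing with each morphism $\p \mapsto C_T(\p)$ produces an upper semicontinuous function on $\mathcal S_\mathcal Q$, and summing over the finitely many combinatorial indices $T$ shows that $\p \mapsto m_\p$ is itself upper semicontinuous. Since $\mathcal S_\mathcal Q$ is irreducible whenever $\mathcal Q$ is not in the orbit of $\mathcal Q_{41}$, any integer-valued upper semicontinuous function on $\mathcal S_\mathcal Q$ attains its minimum on a dense open subset, which gives the claim.

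The candidate $\mathcal Q_{41}$ must be treated separately since $\mathcal S_{\mathcal Q_{41}}$ is reducible; however, each of its two components is a single $\PGL_2$-orbit by \autoref{lem:Q41}, so $m_\p$ is automatically constant on each. The one subtle point I anticipate is the identification of the ``minimum intersection-multiplicity'' definition of $\mult_\mathcal D$ in \autoref{eq:multiplicity} with the standard local multiplicity of $\mathcal D$ at $C$; this identification is what allows us to quote off-the-shelf upper semicontinuity for multiplicities of points on a hypersurface, and it deserves a brief justification when first invoked.
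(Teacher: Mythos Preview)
Your approach is correct in spirit and matches the paper's one-line proof: both rest on upper semicontinuity of the multiplicity function $C \mapsto \mult_{\mathcal D}(C)$, and you are simply making explicit the scaffolding the paper leaves implicit (the morphic dependence of each reducible cubic on $\bfp$, the appeal to irreducibility of $\calS_{\calQ}$, and the separate handling of $\calQ_{41}$).

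There is, however, one genuine imprecision. You claim that ``summing over the finitely many combinatorial indices $T$'' recovers $m_{\bfp}$, but distinct indices frequently yield the \emph{same} reducible cubic. For instance, if $I, I' \in \calI$ are disjoint, then the type-(a) cubic $\ell_I q_{I^c}$ equals the type-(b) triangle $\ell_I \ell_{I'} \ell_{(I\cup I')^c}$ (and also the type-(a) cubic for $I'$); likewise, if $J \in \calJ$ and $I \in \calI$ satisfy $J^c \subset I$, the type-(a) and type-(c) cubics coincide. These are exactly the overlaps that the correction terms in \autoref{thm:epsilonQcomputation} remove. Your sum over all indices therefore overcounts $m_{\bfp}$, and since the excess is itself a sum of multiplicities (not a constant), upper semicontinuity of the overcounted sum does not directly transfer to $m_{\bfp}$.

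The fix is easy and fits your framework: the pattern of coincidences among the $C_T(\bfp)$ is determined entirely by $\calQ$, not by the particular $\bfp \in \calS_{\calQ}$. Hence you may choose, once and for all, one representative index per distinct reducible cubic and sum only over those representatives. With that adjustment your argument goes through and is a fleshed-out version of the paper's.
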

\begin{proof}
The statement follows from the fact that the multiplicity of a point on a variety is an upper semicontinuous function of the point.
\end{proof}
With these definitions, we can bound $d_\mathcal Q$ from above: 
\begin{equation}
\begin{aligned}
\label{eq:ubfrommult}
d_{\mathcal Q} = \max_{\p \in \mathcal S_{\mathcal Q}}(d_{\p}) = \max_{\p \in \mathcal S_{\mathcal Q}}(12-r_{\p}) =  12 - \min_{\p \in \mathcal S_{\mathcal Q}}(r_{\p})\leq  12 - \min_{\p \in \mathcal S_{\mathcal Q}}(m_{\p}) = 12-m_{\mathcal Q}
\end{aligned}
\end{equation}
The value $12-m_{\mathcal Q}$ may be thought of, for now, as an \emph{expected} number of rational cubics through $\mathcal Q$. 

\subsection{An upper bound for $d_\mathcal Q$ through multiplicities of reducible cubics}
Equation \eqref{eq:ubfrommult} gives us a way to compute upper bounds for $d_\mathcal Q$. Crucially, as shown in \autoref{lem:reducibleCubicsThroughp} a B\'ezoutian quatroid $\mathcal Q$ determines the reducible cubics through \emph{any} of its representations, and hence the range of the summations in \eqref{eq:rpversusepsilonp}. In this section, we determine the values $\textrm{mult}_{\mathcal D}(C)$ of the summands.

The multiplicity of a reducible cubic on the discriminant $\mathcal D$ is invariant under the action of $\PGL_2$ on $\mathbb{P}S^3 \mathbb{C}^3$. Hence, the contribution of a reducible cubic $C$ to the number $m_\p$ only depends on the orbit of $C$ under this action. The characterization of these orbits is classical; we refer to  \cite{KoganMaza:ComputationCanonicalFormsTernaryCubics} for a modern reference.
For each orbit, we record in \autoref{tab:orbits} its name, a representative $C \in \mathbb{Q}[x_0,x_1,x_2]$, and the multiplicity $\mult_{\calD}(C)$. We refer to \autoref{sec:tighterboundtangentcones} for a proof of these multiplicities (see \autoref{rem:Multiplicityproof}). Each orbit as well as the orbit-closure containments are illustrated in  \autoref{fig: containment singular plane cubics}. 

\begin{table}[!htpb]
\begin{tabular}{|r|c|c|}\hline
Name & Representative & Multiplicity \\ \hline \hline 
Nodal Cubic &$x_0x_1^2-x_2^2(x_2-x_0)$ & $1$ \\
Cuspidal Cubic & $x_0x_1^2-x_2^3$ & $2$\\
Conic+Secant Line & $x_0(x_0^2+x_1^2-x_2^2)$& $2$\\
Conic+Tangent Line &$x_0(x_0x_1+x_2^2)$ & $3$\\
Triangle & $x_0x_1x_2$ & $3$\\
Asterisk &$x_0x_1(x_0+x_1)$ & $4$\\
Double Line + Transverse Line & $x_0^2x_1$ &$6$ \\
Triple Line & $x_0^3$& $8$ \\ \hline
\end{tabular}
\caption{A name, representative, and the multiplicity for each singular cubic orbit.}
\label{tab:orbits}
\end{table} 
\begin{figure}[!htpb]
 \begin{tikzpicture}[every text node part/.style={align=center}, scale=0.9]
  \draw (-2,0)--(-6,-1.5)--(-1,-3)--(-2,-6)--(-2,-8.5);
  \draw (-2,0)--(2,-1.5)--(5,-3)--(2,-4)--(-2,-6);
  \draw (2,-1.5)--(-2,-3);
  \node[draw, fill= white] at (-2,0) () {\includegraphics[scale=0.2]{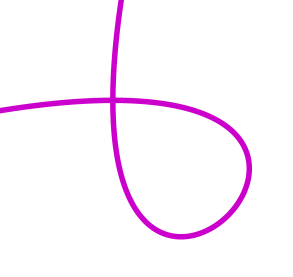} \\ \text{nodal cubic}};
  \node[draw, fill= white] at (-6,-1.5) () {\includegraphics[scale=0.2]{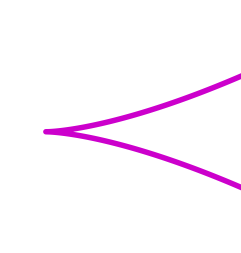} \\\text{cuspidal cubic}};
  \node[draw, fill= white] at (2,-1.5) () {\includegraphics[scale=0.2]{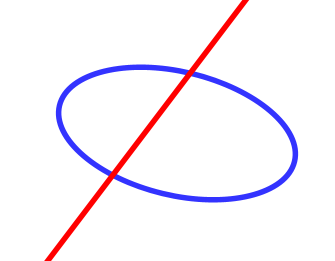}  \\ \text{conic+secant}};
  \node[draw, fill= white] at (-2,-3) () {\includegraphics[scale=0.23]{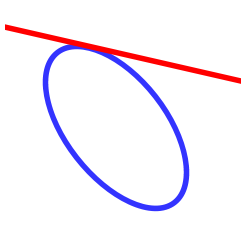}  \\\text{conic+tangent}};
  \node[draw, fill= white] at (5,-3) ()  {\includegraphics[scale=0.2]{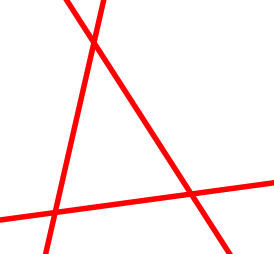} \\ \text{triangle}};
  \node[draw, fill= white] at (2,-4) () {\includegraphics[scale=0.2]{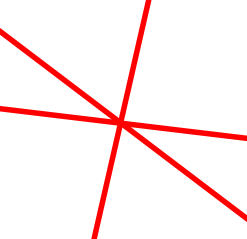} \\ \text{asterisk}};
  \node[draw, fill= white] at (-2,-6) () {\includegraphics[scale=0.2]{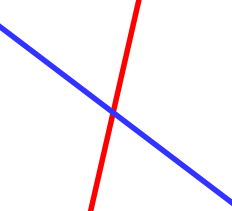} \\ \text{double line+transverse}};
  \node[draw, fill= white] at (-2,-8.5) () {\includegraphics[scale=0.2]{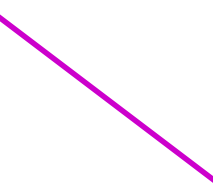} \\ \text{triple line}};
 \end{tikzpicture}
 \caption{Orbit closure containments in the discriminant of singular cubics}\label{fig: containment singular plane cubics}
\end{figure}

A consequence of \autoref{thm:lowerbounds} along with the orbit-closure containments of singular cubics, imply that each reducible cubic through a generic configuration of a B\'ezoutian quatroid stratum is either a triangle or a conic+secant. In light of this fact, we associate to each quatroid stratum $\mathcal S_{Q}$ a string of symbols of the form $\mydef{\varnothing^i\triangle^j}$ which indicates that a generic representative of $\mathcal S_{\mathcal Q}$ is contained in $i$ reducible cubics of the form conic+secant, and $j$ triangles. These are listed, in black font, in the column \emph{Reducibles} of \autoref{tab:bigtable1} and \autoref{tab:bigtable2}. 

\begin{theorem}
\label{thm:epsilonQcomputation}
Let $\calQ = (\calI,\calJ)$ be a B\'ezoutian quatroid. Then
\begin{align*}
m_\mathcal Q=&2 \cdot | \mathcal I| + 2\cdot |\mathcal J| -2\cdot \Bigl|\Bigl\{\{I,J\} \in \mathcal I \times \mathcal J \mid |I \cap J|=1\Bigr\}\Bigr| \\ &- \Bigl|\Bigl\{\{I_1,I_2\} \subset \mathcal I \mid I_1 \cap I_2 = \emptyset\Bigr\}\Bigr|-\Bigl|\Bigl\{\{I_1,I_2,I_3\} \mid I_1 \cup I_2 \cup I_3 = \{ 1 \vvirg 8\}\Bigr\}\Bigr|. 
\end{align*}
\end{theorem}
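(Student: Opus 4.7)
The plan is, via \autoref{lem:genericepsilonp}, to compute $m_\calQ = m_\bfp$ for a generic $\bfp \in \calS_\calQ$ as the sum $\sum_C \mult_\calD(C)$ over reducible $C \in L_\bfp \cap \calD$. The enumeration of such $C$ is given by \autoref{lem:reducibleCubicsThroughp}: they come in three families, (a) $\ell_I \cdot q_{I^c}$ for $I \in \calI$, (b) $\ell_{I_1}\cdot\ell_{I_2}\cdot\ell_{(I_1\cup I_2)^c}$ for disjoint $I_1, I_2 \in \calI$, and (c) $q_J\cdot\ell_{J^c}$ for $J \in \calJ$. I first argue that for generic $\bfp$ each reducible cubic lies in the conic+secant orbit (multiplicity $2$) or the triangle orbit (multiplicity $3$): degeneration to a higher-multiplicity orbit (conic+tangent, asterisk, $\ldots$) requires an extra tangency or concurrency condition and therefore fails on a dense open subset of $\calS_\calQ$, using the irreducibility from \autoref{thm:allirreducible} together with the existence of rational representatives produced in \autoref{thm:lowerbounds} whose reducibles are only conic+secants and triangles.

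Next I classify each type by geometric shape. A type (a) cubic $\ell_I\cdot q_{I^c}$ is a triangle precisely when $q_{I^c}$ is reducible; by the B\'ezoutian property this happens iff some $I'\in\calI$ is disjoint from $I$, in which case $I'\subset I^c$ forces $q_{I^c} = \ell_{I'}\cdot\ell_{(I\cup I')^c}$. Otherwise $q_{I^c}$ is irreducible and the cubic is a conic+secant. Type (c) cubics are always conic+secants, since $\calJ$ indexes only irreducible conics by definition. Type (b) cubics are always triangles. Importantly, a type (a) cubic that happens to be a triangle cannot coincide with a type (c) cubic, since the latter has an irreducible conic component.

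The core combinatorial step is inclusion-exclusion. Weighting each $I \in \calI$ and each $J \in \calJ$ by $2$ gives the naive sum $2|\calI| + 2|\calJ|$, and three sources of overcounting must be corrected. \textbf{(i)} Type (a) for $I$ coincides with type (c) for $J$ iff $\ell_I = \ell_{J^c}$ and $q_{I^c} = q_J$, both equivalent to $J^c\subset I$, i.e.\ $|I\cap J|=1$; the coincident cubic is a single conic+secant of multiplicity $2$ counted twice, an excess of $2$. \textbf{(ii)} For disjoint $I_1, I_2 \in \calI$, the triangle $\ell_{I_1}\cdot\ell_{I_2}\cdot\ell_{(I_1\cup I_2)^c}$ is counted twice (once per $I_i$) by type (a), contributing $4$ for a cubic of multiplicity $3$, an excess of $1$. \textbf{(iii)} For a covering triple $\{I_1, I_2, I_3\}\subset\calI$, inclusion-exclusion with the B\'ezoutian bound $|I_i\cap I_j|\le 1$ forces exactly one pair to meet in a singleton and the other two to be disjoint. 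Then $\ell_{I_1}\cdot\ell_{I_2}\cdot\ell_{I_3}$ is counted three times by type (a) (contribution $6$) while correction (ii) removes $2$ for the two disjoint pairs, leaving the count at $4$ when the true multiplicity is $3$; an additional subtraction per covering triple closes the gap.

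The last step is verifying exhaustiveness: two distinct type (a) cubics cannot coincide because $\ell_I = \ell_{I'}$ for distinct triples would put $\ge 4$ points on a line and $q_{I^c} = q_{(I')^c}$ would put $\ge 7$ points on a conic, both forbidden by the B\'ezoutian criteria; the analogous check rules out coincidences between distinct type (c) cubics. Subtracting (i)-(iii) from $2|\calI|+2|\calJ|$ then produces the claimed formula. The subtlest point is the bookkeeping in (iii)---one must verify that the disjoint-pair and covering-triple corrections combine without double counting. This works precisely because a triangle with all three lines in $\calI$ generates exactly two disjoint pairs and exactly one covering triple, giving a total correction $2\cdot 1 + 1\cdot 1 = 3$ that reduces the naive contribution $6$ to the true multiplicity $3$.
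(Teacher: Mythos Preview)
Your proof is correct and follows essentially the same approach as the paper: both start from the naive count $2|\calI|+2|\calJ|$ (treating every $I$ and $J$ as producing its own conic+secant of multiplicity $2$) and then subtract three correction terms for the coincidences and for the cases where a triangle of multiplicity $3$ appears instead. Your write-up is more explicit than the paper's in verifying the combinatorics---e.g.\ that $|I\cap J|=1$ iff $J^c\subset I$, and that a covering triple necessarily has exactly two disjoint pairs---but the structure of the overcount and the three corrections is identical.
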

\begin{proof}
The proof is an overcount along with a justification of correction terms. For each triple and each sextuple, we count two under the erroneous assumption that every triple and sextuple gives rise to its own conic+secant pair, which by \autoref{tab:orbits} has multiplicity two. This accounts for the first two summands. We have directly overcounted conic+secant pairs by the third summand. 

The fourth and fifth summands cover the cases when the triple did \emph{not} give rise to a conic+secant, but rather, a triangle. In one case, two of the three lines of that triangle were counted by $2|\mathcal I|$, contributing a total of $4$ to $m_{\mathcal Q}$, which should have been a $3$ by \autoref{tab:orbits}. Hence the fourth summand corrects for this. The other case is when all three lines of the triangle were counted by $2|\mathcal I|$, in which case the fourth summand reduced the contribution from $6$ to $4$, and the fifth and final summand corrects this contribution to $3$;  this correction term only counts once.
\end{proof}

\begin{theorem}
\label{thm:upperboundsnottight}
For all $\mathcal Q \in \mathfrak B$ the numbers computed by  \autoref{thm:lowerbounds} agree with the numbers $12-m_\mathcal Q$ except for  the following $24$ quatroid orbits
$$\{\mathcal Q_{10},\mathcal Q_{12},\mathcal Q_{18}
,\mathcal Q_{21},
\mathcal Q_{26},\mathcal Q_{29},\mathcal Q_{31},
\mathcal Q_{38},
\mathcal Q_{47},
\mathcal Q_{56},\mathcal Q_{58}
,\mathcal Q_{72}\}=\{\mathcal Q \in \mathfrak B \mid \mathcal Q \leq \mathcal Q_{10}, \mathcal Q \not\leq \mathcal Q_{77}\}$$
$$\{\mathcal Q_{33},
\mathcal Q_{34},
\mathcal Q_{35},
\mathcal Q_{42},
\mathcal Q_{48},
\mathcal Q_{49},
\mathcal Q_{67},
\mathcal Q_{68},
\mathcal Q_{69},
\mathcal Q_{72},
\mathcal Q_{77}\} = \{\mathcal Q \in \mathfrak B \mid \mathcal Q \leq \mathcal Q_{77}\}.$$
In the above cases, the number $12-m_\mathcal Q$ is precisely \emph{one} more than the associated number in  \autoref{thm:lowerbounds}.
\end{theorem}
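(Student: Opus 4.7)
The proof is a finite verification across the $76$ B\'ezoutian quatroid orbits enumerated in \autoref{cor:orbitcorollary}. The plan is as follows. First, for each such orbit $\calQ_i$, I would evaluate the closed formula of \autoref{thm:epsilonQcomputation} to produce $m_{\calQ_i}$ directly from the combinatorial data $(\calI,\calJ)$: each of the five summands is a short enumeration over subsets of $\calI$ and $\calJ$. Second, I would read the lower bound $d_{\bfp_i}$ tabulated in \autoref{thm:lowerbounds} for the $\mathbb{Q}$-representative $\bfp_i \in \calS_{\calQ_i}$. Combining \eqref{eq:ubfrommult} with \autoref{cor:lowerbounds} sandwiches $d_{\calQ_i}$ between these two values, so the comparison of $12 - m_{\calQ_i}$ against $d_{\bfp_i}$ either pins $d_{\calQ_i}$ exactly (the matching case) or exhibits the gap, whose size is the quantity to record.

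Executing this comparison yields $12 - m_{\calQ_i} = d_{\bfp_i}$ on all but $24$ B\'ezoutian orbits, and on those $24$ the gap is always exactly $1$. To match this discrepant set with the order-theoretic description $\{\calQ \leq \calQ_{10},\ \calQ \not\leq \calQ_{77}\} \cup \{\calQ \leq \calQ_{77}\}$, I would invoke \autoref{rem:OrderIsCombinatorial}(b): since every quatroid involved is B\'ezoutian, $\calQ \leq \calQ'$ reduces to the combinatorial containments $\calI' \subseteq \calI$ and $\calJ' \subseteq \calJ \cup \calI^2$. With $\calQ_{10} = (\{123\},\{145678\})$ explicit and $\calQ_{77}$ identifiable via the three pairwise disjoint collinear triples partitioning $\{1,\ldots,8\}$ appearing in the proof of \autoref{lem:ReducedBaseLociConditions}, the membership tests are straightforward. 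A computer-assisted enumeration then verifies that the $24$ gap-bearing orbits coincide exactly with those in the union.

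The main obstacle is bookkeeping volume rather than any conceptual barrier: $76$ orbits, each requiring several subset enumerations for \autoref{thm:epsilonQcomputation} and several containment checks for \autoref{rem:OrderIsCombinatorial}(b). The computation is naturally handled by the auxiliary \texttt{Quatroids.jl} package and cross-checked against \autoref{thm:lowerbounds}. What this theorem does \emph{not} address---and which is genuinely substantive---is the reason each of the $24$ gaps is exactly $1$ rather than larger; that requires showing the corresponding pencils $L_\bfp$ are forced to be tangent to a single branch of $\calD$ through a reducible cubic, contributing one unit of intersection multiplicity above the generic value recorded by $\mult_\calD$. That tangency argument is the content of the subsequent subsection and supplies the final ingredient for \autoref{thm:MainTheorem}.
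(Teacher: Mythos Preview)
Your approach matches the paper's: the proof is a direct computational verification, evaluating the formula of \autoref{thm:epsilonQcomputation} on each B\'ezoutian orbit and comparing against the lower bounds of \autoref{thm:lowerbounds}, with the order-theoretic characterization checked combinatorially (the paper's auxiliary functions \texttt{QuatroidsWeakUpperBounds()}, \texttt{ContainedIn10(Q)}, and \texttt{ContainedIn77(Q)} do exactly this). One small correction: $\calQ_{77} = (\{123,145,678\},\emptyset)$ does not consist of three \emph{pairwise disjoint} triples---the first two share the index $1$---so your description of how to identify $\calQ_{77}$ from \autoref{lem:ReducedBaseLociConditions} needs adjusting, though this does not affect the argument.
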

\begin{proof}
The number $m_{\mathcal Q}$ is easily computed via the formula in  \autoref{thm:epsilonQcomputation}.
\end{proof}

That the bound from  \autoref{thm:epsilonQcomputation} only fails to be an equality by at most one is a stroke of luck. Consequently, it is enough to show that for each of these $24$ quatroids, a configuration $\p$ represents it only if $\textrm{imult}_C(L_{\p},\mathcal D) > \textrm{mult}_\mathcal D(C)$ for some reducible $C$ through $\p$.

\subsection{A tighter bound for $d_{\mathcal Q}$ through tangent cones of reducible cubics}\label{sec:tighterboundtangentcones}
The expected count of rational cubics through generic $\p \in \mathcal S_{\mathcal Q}$ fails to be correct when the line $L_{\p}$ intersects $\mathcal D$ at some reducible $C \in \mathcal D$ with intersection multiplicity greater than $\textrm{mult}_{\mathcal D}(C)$. Such lines are characterized by the \emph{tangent cone} to $\mathcal D$ at $C$.

\begin{definition}\label{def:tangent space and cone} The \mydef{tangent space} to $\mathcal D$ at $C \in \mathcal D$ is the union of lines $L \subset \mathbb{P}S^3\mathbb{C}^3$ for which 
\[
\textrm{imult}_{C}(L,\mathcal D) >1, 
\]
and is denoted $\mydef{\T_{C}\mathcal D}$. The \mydef{tangent cone} to $\mathcal D$ at $C $ is the union of lines $L \subset \mathbb{P}S^3\mathbb{C}^3$ for which 
\[
\textrm{imult}_{C}(L,\mathcal D) > \textrm{mult}_{\mathcal D}(C), 
\]
and is denoted $\mydef{\TC_{C}\mathcal D}$. Such lines are said to be \mydef{tangent} to $\mathcal D$ at $C$.
\end{definition}

\begin{remark}
We refer to \cite[Ch.\ 2]{Shafarevich} for more standard definitions of tangent spaces and tangent cones. The equivalent \autoref{def:tangent space and cone} streamlines the analysis necessary for our results.
\end{remark}

For $p \in \mathbb{P}^2_{\mathbb{C}}$ we let $\mydef{H_p} \subset \mathbb{P}S^3 \mathbb{C}^3$ be the hyperplane of cubics which vanish at $p$. The description of tangent spaces at smooth points of the discriminant is classical.

\begin{remark}\label{rem:TangentSpaceSmoothPoint}
A point $C \in \mathcal D$ is a smooth point of $\mathcal D$ if and only if it is a nodal cubic. If $C$ is smooth on $\mathcal D$ and $p$ is the node of $C$ then 
\[
\textrm{T}_C\mathcal D = \TC_C\mathcal D = H_p.  
\]
For details, see \cite[Example 1.2.3]{Dolg}.
\end{remark}

 \autoref{tab:orbittangentcones} extends the tangent cone description of  \autoref{rem:TangentSpaceSmoothPoint}  beyond nodal cubics. Write $\mydef{kH_{p}}$ for the hyperplane $H_p$ counted with multiplicity $k$. Moreover, if $\ell \in \mathbb{P}S^1{\mathbb{C}}^3$ is a linear form, write $\mydef{\mathcal D_{\ell}'}$ for the subset of $\mathbb{P}S^3\mathbb{C}^3$ of cubics whose intersection with the line $\{ \ell = 0\}$ is singular.

\begin{proposition}
\label{prop:tangentcones}
The information in \autoref{tab:orbittangentcones} is correct.
\end{proposition}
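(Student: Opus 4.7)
The plan is to verify Table 4 orbit-by-orbit using the canonical representative $C$ listed in Table 3 for each case. Recall that for any line $L\subset \bbP S^3\bbC^3$ through $C$, parametrized as $\{C+tG\}$ for $G \in \bbP S^3\bbC^3\setminus\{C\}$, the intersection multiplicity $\imult_C(L,\calD)$ equals $\mathrm{ord}_{t=0}\,\disc(C+tG)$. Thus $\mult_\calD(C)$ is the minimum of these orders over $G$, and $\TC_C\calD$ is cut out, as a subset of the projectivized tangent space (identified with the set of directions $G$), by the homogeneous piece of lowest degree in the Taylor expansion of $\disc$ at $C$. My proof will compute this lowest-degree piece in each case.

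First I would dispatch the nodal case via \autoref{rem:TangentSpaceSmoothPoint}, which is classical. Next I would treat the cases where $C$ has only isolated $A_k$ singularities (cuspidal, conic+secant, triangle, conic+tangent) by the following geometric principle: if $p_1,\ldots,p_m$ are the singular points of $C$, with local Milnor numbers $\mu_1,\ldots,\mu_m$, then locally at $C$ the discriminant decomposes as a union of equisingular branches, the $i$th branch consisting of deformations that preserve a singularity near $p_i$. The branch preserving the $i$th singularity has tangent space contained in $H_{p_i}$ (since $G(p_i)=0$ is forced). For $A_1$ nodes the branch is smooth, contributing $H_{p_i}$; for $A_k$ singularities one gets a contribution of $kH_{p_i}$ (or for the tacnode $A_3$ on conic+tangent, a contribution involving the codimension-two locus $\calD'_\ell$). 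Summing the local Milnor numbers recovers $\mult_\calD(C)$ and summing the contributions recovers $\TC_C\calD$; the sums in each case match Table 3 (e.g.\ $1+1=2$ for conic+secant, $1+1+1=3$ for triangle, $2$ for cuspidal, $3$ for tacnode).

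For the remaining three orbits (asterisk, double line + transverse, triple line) the Milnor-number argument fails because either the singularity is non-simple ($D_4$ for the asterisk) or the singular locus is positive-dimensional. Here I would invoke the $\PGL_3$-equivariance of $\calD$: since $\TC_C\calD$ is stabilized by the (large) stabilizer of $C$ in $\PGL_3$, the possible homogeneous polynomials of each degree on $T_C\bbP S^3\bbC^3$ that cut it out are very constrained. Combining this symmetry with a direct Macaulay2 computation of $\disc(C+tG)$ for a small number of test directions $G$ pins down the lowest-degree component and hence confirms the claimed multiplicity and tangent-cone expression.

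The main obstacle will be the non-isolated cases, especially the triple line (multiplicity $8$) and double line + transverse (multiplicity $6$), since the degree-$12$ discriminant of ternary cubics is unwieldy to expand to high order by hand. My plan is to use the equivariance heavily to reduce the computation to checking a short list of coefficients on a well-chosen slice of the tangent space; a computer algebra verification (parallel to the scripts in \texttt{countReducibleCubics.m2}) would provide the final certificate. Once \autoref{prop:tangentcones} is proved, \autoref{rem:Multiplicityproof} follows by reading off the degree of $\TC_C\calD$ in each row of Table 4 and comparing with the \emph{Multiplicity} column of Table 3.
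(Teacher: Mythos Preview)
The paper's proof consists of a single sentence deferring to the \texttt{Macaulay2} script \texttt{tangentConesPlaneCubics.m2}, which computes the tangent cone symbolically at a representative of each of the eight orbits. No conceptual argument is given.

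Your plan is genuinely different and more geometric for most cases. The principle you invoke---that for a curve $C$ with only isolated singularities at $p_1,\ldots,p_m$ the tangent cone to $\calD$ at $C$ equals $\bigcup_i \mu_i H_{p_i}$---is correct and does the job, but you should state it as a lemma with a reference (it follows from the local product decomposition of the discriminant near a curve with isolated singularities, together with versality of the family $\bbP S^3\bbC^3$ at each singular point). Two corrections to your write-up: (i) the conic$+$tangent case really is $3H_p$, not anything involving $\calD'_\ell$, so drop that parenthetical; (ii) the $D_4$ singularity of the asterisk \emph{is} simple and has $\mu=4$, so the Milnor argument applies and yields $4H_p$ on the nose---there is no need to banish the asterisk to the hard cases. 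This leaves only the double-line$+$transverse and the triple line, where the singular locus is one-dimensional; your equivariance-plus-computation plan for those two is reasonable, though at that point you are essentially running a symbolic check anyway, which is exactly what the paper does for all eight rows at once. The trade-off: your route explains six of the eight rows conceptually at the cost of an extra lemma, while the paper's route is a uniform black-box verification with no explanation.
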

\begin{proof}
The  \texttt{Macaulay2} script \texttt{tangentConesPlaneCubics.m2} proves the result symbolically. 
\end{proof}

\begin{table}[!htpb]
\begin{tabular}{|c|c|c|c|}
\hline 
Cubic Orbit & Curves & Points & Tangent Cone\\ \hline \hline 
Nodal Cubic & $C$ & $p$: the node of $C$ & $H_p$ \\
Cuspidal Cubic &$C$ &  $p$: the cusp of $C$ & $2H_p$ \\
Conic+Secant Line & $q$: conic, $\ell$: line & $(p_1,p_2)=q\cap \ell$ & $H_{p_1} \cup H_{p_2}$\\
Conic+Tangent Line & $q$: conic, $\ell$: line& $p=q\cap \ell$ & $3H_p$ \\
Triangle & $\ell_1,\ell_2,\ell_3$: lines & $p_k=\ell_i\cap \ell_j$ & $H_{p_1} \cup H_{p_2} \cup H_{p_3}$ \\
Asterisk &$\ell_1,\ell_2,\ell_3$: lines& $p=\ell_1\cap \ell_2\cap \ell_3$ & $4H_p$ \\
Double Line+Transverse Line & $\ell_1$: double line, $\ell_2$: line & $p=\ell_1\cap \ell_2$ & $2H_p \cup \mathcal D'_{\ell}$ \\
Triple Line & $\ell$: line & & $2\mathcal D'_{\ell}$ \\ \hline
\end{tabular}
\caption{Descriptions of tangent cones to the discriminant $\mathcal D$.}
\label{tab:orbittangentcones}
\end{table}

\begin{remark}
\label{rem:Multiplicityproof}
By definition, the multiplicity of a point $C$ on the discriminant $\mathcal D$ is the same as the multiplicity of $C$ on $\TC_C \mathcal D$. Since $\TC_C \mathcal D$ is a cone over $C$, the multiplicity of $C$ in $\TC_C$ coincides with the degree of $\TC_C$. Hence one may take the \emph{Tangent Cone} column of  \autoref{tab:orbittangentcones} as the proof of the \emph{Multiplicity} column of  \autoref{tab:orbits}. We remark that $\mathcal D'_{\ell}$ has degree four since it is a cone over the variety of singular binary cubics.
\end{remark}

\begin{corollary}
\label{cor:TangentConeSingularCubic}
Let $\p \in \mathcal P$. The pencil $L_{\p}$ is tangent to $\mathcal D$ at $C \in L_{\p}$ if and only if $C$ is singular at one of the points in $\p$.
\end{corollary}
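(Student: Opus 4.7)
The plan is to reduce tangency to the containment $L_\bfp \subset \TC_C \mathcal D$ via Definition \ref{def:tangent space and cone}, and then examine each singular orbit through the lens of Table \ref{tab:orbittangentcones}. First I would observe that the pencil hypothesis on $L_\bfp$, via Proposition \ref{prop:FibreIsFinite}, prevents $\bfp$ from having four collinear points or seven points on a conic. Consequently a cubic $C \in L_\bfp$ cannot be a triple line or a double line plus a transverse line: either would force $\bfp$ to lie on at most two lines, accommodating at most six points. So $C$ must belong to one of the remaining six orbits, each of which has a finite singular locus, and in each such case Table \ref{tab:orbittangentcones} yields the set-theoretic formula $\TC_C \mathcal D = \bigcup_q H_q$, with $q$ ranging over the singular points of $C$.

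With this reduction, the forward direction is immediate: if $C$ is singular at some $p_i \in \bfp$, then $H_{p_i}$ is a component of $\TC_C \mathcal D$ and $L_\bfp \subset H_{p_i}$ since every cubic in $L_\bfp$ vanishes at $p_i$. For the reverse direction, my strategy is to use that $L_\bfp$ is an irreducible projective line: a projective line contained in a finite union of hyperplanes must lie entirely in one of them (otherwise its intersection with the union would be finite, but $L_\bfp \cong \bbP^1_\bbC$ is infinite), so $L_\bfp \subset H_q$ for some singular point $q$ of $C$, and then $q$ is a base point of $L_\bfp$.

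The hard part — the main obstacle I anticipate — is excluding the possibility that $q$ is the Cayley--Bacharach point rather than one of the points of $\bfp$. Here I would invoke B\'ezout's Theorem applied to $C \cap C'$ for any companion $C' \in L_\bfp \setminus \{C\}$: this intersection is a zero-dimensional scheme of degree nine, and the standard lower bound
\[
\imult_q(C, C') \geq \mult_q(C) \cdot \mult_q(C') \geq 2
\]
holds at $q$, since $\mult_q(C) \geq 2$ (as $q$ is singular on $C$) and $\mult_q(C') \geq 1$ (as $q$ is a base point). If $q \notin \bfp$, the eight distinct points of $\bfp$ together with $q$ would contribute at least $8 + 2 = 10$ to the length of $C \cap C'$, contradicting B\'ezout. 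This forces $q \in \bfp$ and completes the proof. Aside from this count, the argument is purely orbit-by-orbit bookkeeping against Proposition \ref{prop:tangentcones}.
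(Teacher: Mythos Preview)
Your approach matches the paper's---both reduce to the tangent-cone descriptions of Proposition~\ref{prop:tangentcones}---but you supply substantially more detail than the paper, whose entire proof is the single sentence ``This follows from the characterization of the tangent cones given in \autoref{tab:orbittangentcones}.'' In particular, your B\'ezout count ruling out the Cayley--Bacharach point as the singular base point is a genuine step that the paper leaves implicit.

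One imprecision to flag: the hypothesis that $L_\bfp$ is a pencil does \emph{not}, by itself, yield the conditions of Proposition~\ref{prop:FibreIsFinite}. For instance, if seven of the $p_i$ lie on a conic $q$ and $p_8 \notin q$, then $L_\bfp = \{\, q \cdot \ell : \ell(p_8) = 0\,\}$ is a pencil entirely contained in $\mathcal D$; a generic element $C = q\ell$ has singular locus $q \cap \ell$ disjoint from $\bfp$, so the ``only if'' direction would fail under the convention $\imult = +\infty$. The corollary should be read under the standing B\'ezoutian assumption of the section (equivalently $L_\bfp \not\subset \mathcal D$), which is also what the paper silently does; with that in place your argument is correct.
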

\begin{proof}
This follows from the characterization of the tangent cones given in \autoref{tab:orbittangentcones}. 
\end{proof}

\begin{proposition}
If $\p$ represents $\mathcal Q$ for $\mathcal Q \leq \mathcal Q'$ where $\mathcal Q'$ is in the same orbit as $\mathcal Q_{10}$, then $L_{\p}$ tangent to the discriminant. 
Conversely, if $\p$ is a generic point on $\mathcal S_{\mathcal Q}$ for some $\mathcal Q \not\leq \mathcal Q'$ for every $\mathcal Q'$ in the orbit of $\mathcal Q_{10}$, then $L_{\p}$ is not tangent to the discriminant.
\end{proposition}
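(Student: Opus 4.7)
The plan is to treat both directions of the proposition through Corollary~\ref{cor:TangentConeSingularCubic}, which reduces the question of tangency of $L_\bfp$ along $\calD$ to whether any cubic in $L_\bfp$ is singular at a point of $\bfp$. Then the argument becomes a purely combinatorial translation via Lemma~\ref{lem:reducibleCubicsThroughp} and Remark~\ref{rem:OrderIsCombinatorial}(b).

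For the forward direction, I would use the $\frakS_8$-symmetry to assume $\calQ' = \calQ_{10} = (\{123\},\{145678\})$. Since $\calQ_{10}$ is B\'ezoutian, Remark~\ref{rem:OrderIsCombinatorial}(b) says that $\calQ \leq \calQ_{10}$ forces $\{123\} \in \calI_\calQ$ together with either $\{145678\} \in \calJ_\calQ$ or $\{145678\} = I \cup I'$ for disjoint $I,I' \in \calI_\calQ$. In the first subcase the conic+secant cubic $C = \ell_{123} \cdot q_{145678}$ lies in $L_\bfp$ and is singular at $p_1 \in \ell_{123} \cap q_{145678}$; in the second subcase, with $1 \in I$ without loss of generality, the triangle $C = \ell_{123} \cdot \ell_I \cdot \ell_{I'}$ lies in $L_\bfp$ and is singular at $p_1 \in \ell_{123} \cap \ell_I$. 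Either way Corollary~\ref{cor:TangentConeSingularCubic} concludes.

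For the converse, assume $\calQ$ is not dominated by any quatroid in the orbit of $\calQ_{10}$. I would first dispense with the non-B\'ezoutian case, where $L_\bfp$ fails to be a one-dimensional pencil intersecting $\calD$ in a zero-dimensional scheme and so tangency along $L_\bfp$ is not meaningful. For B\'ezoutian $\calQ$, I would show that for generic $\bfp \in \calS_\calQ$ no cubic in $L_\bfp$ is singular at any $p_i$. For reducible cubics, I would walk through the three shapes of Lemma~\ref{lem:reducibleCubicsThroughp}: for form (a), $\ell_I \cdot q_{I^c}$ singular at some $p_i$ with $i \in I$ yields $\{i\}\cup I^c \in \calJ$, which together with $I$ is exactly a triple–sextuple pair meeting in a single index (the case $i \in I^c$ is ruled out by B\'ezout's second strong criterion); form (c) is symmetric; form (b) requires the extra observation that the triangle is singular at $p_i \in I_1$ precisely when $\{i\}\cup (I_1\cup I_2)^c \in \calI$, so that $I_2 \cup \{i\}\cup (I_1 \cup I_2)^c \in \calI^2$ is a sextuple meeting $I_1$ in exactly $\{i\}$. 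In every case, Remark~\ref{rem:OrderIsCombinatorial}(b) turns this into $\calQ \leq \calQ_{10}$-orbit, contradicting the hypothesis.

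For irreducible $C \in L_\bfp$ singular at some $p_i$, the condition is equivalent to the Cayley–Bacharach point of $\bfp$ coinciding with $p_i$, i.e.\ to $Z(L_\bfp)$ being nonreduced at $p_i$. For exhaustive $\calQ$ this is ruled out by Corollary~\ref{cor:ExhaustiveReduced}; for non-exhaustive $\calQ$, the free points can be perturbed, so the locus in $\calS_\calQ$ of configurations with reduced base locus contains a dense open set. Combining the two cases, no cubic in $L_\bfp$ is singular at any $p_i$, so by Corollary~\ref{cor:TangentConeSingularCubic} the pencil $L_\bfp$ is not tangent to $\calD$.

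The main obstacle is the form (b) case of the converse, where one has to notice that a triangle cubic cannot be singular at $p_i$ purely from the disjointness of $I_1, I_2$; the singularity forces an auxiliary triple in $\calI$ that, grouped with $I_2$, supplies the sextuple in $\calI^2$ required to invoke Remark~\ref{rem:OrderIsCombinatorial}(b). The rest of the argument is a fairly mechanical translation once Corollary~\ref{cor:TangentConeSingularCubic} and Corollary~\ref{cor:ExhaustiveReduced} are in hand.
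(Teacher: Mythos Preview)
Your forward direction is essentially the paper's argument, though phrased through the combinatorial characterization of $\leq$. One caution: you invoke Remark~\ref{rem:OrderIsCombinatorial}(b) as an \emph{if and only if} to extract $123\in\calI$ and $145678\in\calJ\cup\calI^2$ from $\calQ\leq\calQ_{10}$, but at this point in the paper that remark only states sufficiency; the full characterization is Lemma~\ref{lemma:inclusion bezoutian}, proved later. This is easily repaired by arguing as the paper does: $\calQ\leq\calQ_{10}$ gives $\bfp\in\bar{\calS_{\calQ_{10}}}\subseteq X_{123}\cap Y_{145678}$ directly, and then the cubic $\ell_{123}\cdot q_{145678}$ (with $q_{145678}$ possibly reducible) is singular at $p_1$.

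Your converse takes a genuinely different route from the paper. The paper's proof is computational: it appeals to the explicit calculations of Theorem~\ref{thm:lowerbounds} and Theorem~\ref{thm:upperboundsnottight}, observing that $d_{\bfp}=12-m_{\calQ}$ on the rational representatives forces $r_\bfp=m_\bfp$ and hence no excess intersection. You instead give a direct combinatorial argument via Corollary~\ref{cor:TangentConeSingularCubic}, running through the three shapes of Lemma~\ref{lem:reducibleCubicsThroughp} to show that a reducible cubic singular at some $p_i$ manufactures a $\calQ_{10}$-type triple--sextuple pair, and then handling irreducible cubics through the Cayley--Bacharach point and Lemma~\ref{lem:ReducedBaseLociConditions}. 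This is correct and more self-contained: it does not rely on the rational representatives or the symbolic discriminant computations, only on results already established in Sections~\ref{sec:AGandCubics}--\ref{sec:Quatroids} and Corollary~\ref{cor:TangentConeSingularCubic}. The price is a longer case analysis, and your treatment of the non-exhaustive case (``free points can be perturbed'') deserves one more sentence: Lemma~\ref{lem:ReducedBaseLociConditions} already rules out $p_9=p_i$ for every $i$ appearing in a condition, and for the remaining free indices the locus $\{p_9=p_i\}$ is a proper closed subset of $\calS_\calQ$ because $p_i$ can be moved off it while preserving $\calQ$.
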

\begin{proof} Without loss of generality, assume $\calQ'=\calQ_{10}$.
Recall that $\mathcal Q_{10} = (\{123\},\{145678\})$ and observe that $\ell_{123}\cdot q_{145678}$ is a reducible cubic $C$ through any realization. It is singular at $p_1$ and so $L_{\p}$ is tangent to $\mathcal D$ at $C$. 
Conversely, for all other quatroids, note that the $d_{\p}$ computed from  \autoref{thm:lowerbounds} agrees with $12-m_\mathcal Q$ as computed in \autoref{thm:upperboundsnottight}. Therefore,  $m_\p = r_\p$  and the result follows.
\end{proof}

\begin{proposition}
\label{prop:equivalentNonreduced} 
Let $\p \in \mathcal P$ represent a B\'ezoutian quatroid. The following are equivalent:
\begin{enumerate}
\item $L_{\p}$ is tangent to $\mathcal D$,
\item There exists a cubic $C \in L_{\p}$ which is singular at a point of $\p$,
\item The base locus $Z(L_{\p})$ is nonreduced, with one component of length exactly two,
\item $d_{\p} \leq 12-m_{\p} -1 $.
\end{enumerate}
\end{proposition}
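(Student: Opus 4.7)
The plan is to cycle through $(1)\Leftrightarrow(2)$, $(2)\Leftrightarrow(3)$, and $(2)\Leftrightarrow(4)$. Throughout I would use the B\'ezoutian hypothesis via \autoref{prop:FibreIsFinite} and \autoref{lem:havingcubicsimpliesbezoutian} to ensure that $Z(L_\bfp)$ is a $0$-dimensional scheme of total length $9$ and that $L_\bfp \cap \mathcal D$ is a $0$-dimensional scheme of degree $12$. The central tool linking the geometric and the discriminantal sides is \autoref{cor:TangentConeSingularCubic}, which identifies tangency of $L_\bfp$ to $\mathcal D$ at $C$ with singularity of $C$ at a point of $\bfp$.

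The equivalence $(1)\Leftrightarrow(2)$ is exactly \autoref{cor:TangentConeSingularCubic}. For $(2)\Rightarrow(3)$ I would fix $C \in L_\bfp$ singular at $p_i$ and pick any $C' \in L_\bfp$ linearly independent from $C$. If every element of $L_\bfp$ were singular at $p_i$, then $Z(L_\bfp)$ would contain the length-$3$ fat point at $p_i$, forcing total length $\geq 3+7=10>9$; hence some $C'$ is smooth at $p_i$. A short local computation gives $\imult_{p_i}(C,C')=2$, and length counting against the other seven base points shows that $Z(L_\bfp)$ has a length-$2$ component at $p_i$ and is reduced elsewhere. For $(3)\Rightarrow(2)$, the length-$2$ component must sit on one of the $p_i$ (otherwise the seven remaining base points already push the total length above $9$). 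Since every element of $L_\bfp$ passes through this length-$2$ scheme, the gradients at $p_i$ of any two generators of $L_\bfp$ are proportional, so a suitable linear combination yields a cubic in $L_\bfp$ with vanishing gradient at $p_i$.

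For $(2)\Leftrightarrow(4)$, I would compare $r_\bfp = \sum_{\text{reducible } C} \imult_C(L_\bfp,\mathcal D)$ with $m_\bfp = \sum_{\text{reducible } C} \mult_\mathcal D(C)$ term by term. Since $\imult_C(L_\bfp,\mathcal D) \geq \mult_\mathcal D(C)$, condition $(4)$ rewrites as $r_\bfp \geq m_\bfp + 1$, which is equivalent to the inequality $\imult_C(L_\bfp,\mathcal D) > \mult_\mathcal D(C)$ being strict at some reducible $C \in L_\bfp$, i.e.\ to $L_\bfp \subseteq \TC_C\mathcal D$ there. By \autoref{cor:TangentConeSingularCubic}, this forces $C$ to be singular at some $p_j$, giving $(2)$. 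For the converse, \autoref{lem:reducibleCubicsThroughp} enumerates the reducible cubics through $\bfp$ by combinatorial type, and the tangent direction imposed by the length-$2$ base-locus component at $p_i$ distinguishes a reducible cubic whose two components both vanish at $p_i$; this cubic is then singular at $p_i$ and supplies the excess intersection multiplicity responsible for $(4)$.

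The hard part will be making $(2)\Rightarrow(4)$ precise: a priori the singular cubic produced by $(2)$ could be irreducible, in which case its excess intersection multiplicity is absorbed into $d_\bfp$ rather than $r_\bfp$ and does not immediately push $r_\bfp$ past $m_\bfp$. Closing this gap requires combining \autoref{lem:reducibleCubicsThroughp} with the tangent-direction constraint at $p_i$ coming from the length-$2$ base-locus component, so as to exhibit an explicit \emph{reducible} member of $L_\bfp$ whose singular locus meets $\bfp$, thereby guaranteeing $r_\bfp \geq m_\bfp + 1$.
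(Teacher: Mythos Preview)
Your handling of $(1)\Leftrightarrow(2)$ and $(2)\Leftrightarrow(3)$ follows the paper's line: the paper also cites \autoref{cor:TangentConeSingularCubic} for the first equivalence, and for $(3)\Rightarrow(2)$ it normalizes so that the common tangent line at $p$ is $x_1=0$ and subtracts two cubics of the form $x_0^2x_1+F_i$ to obtain one singular at $p$, which is your gradient argument written in coordinates.

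The divergence is at $(4)$. The paper asserts that $(1)\Leftrightarrow(4)$ ``follows directly from the definitions of multiplicity and tangent cone'', whereas you route through $(2)$ and flag a gap in $(2)\Rightarrow(4)$: the cubic $C$ singular at $p_i$ might be irreducible, so its excess intersection multiplicity lands in $d_\bfp$, not $r_\bfp$. You are right to worry, and you are in fact more careful than the paper here. Unwinding $(4)$ gives $r_\bfp\geq m_\bfp+1$, equivalently $\imult_C(L_\bfp,\mathcal D)>\mult_\mathcal D(C)$ at some \emph{reducible} $C$, while $(1)$ only asserts this at some $C\in L_\bfp\cap\mathcal D$. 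So $(4)\Rightarrow(1)$ is genuinely immediate, but $(1)\Rightarrow(4)$ is not.

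However, your proposed repair---using \autoref{lem:reducibleCubicsThroughp} together with the tangent direction at $p_i$ to exhibit a reducible member of $L_\bfp$ singular at $p_i$---cannot work in the stated generality, because such a reducible cubic need not exist. Let $C$ be a nodal cubic with node $p_1$ and take $p_2,\ldots,p_8$ generic smooth points of $C$; then $\bfp$ represents the uniform quatroid $\mathcal Q_1$, so by \autoref{thm:Characterization} there are no reducible cubics through $\bfp$ at all, hence $r_\bfp=m_\bfp=0$ and $d_\bfp=12$. Here $(1)$, $(2)$, $(3)$ all hold (witnessed by $C$), but $(4)$ asserts $12\leq 11$. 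Thus $(1)\Rightarrow(4)$ is false as stated, and neither the paper's one-line claim nor your combinatorial fix can rescue it.

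This does not affect the paper's main results: the only place $(3)\Rightarrow(4)$ is invoked (in \autoref{thm:MainTheorem}, via \autoref{cor:1077Nonreduced}) is for quatroids $\mathcal Q\leq\mathcal Q_{10}$, where the reducible cubic $\ell_I\cdot q_J$ with $|I\cap J|=1$ is singular at the shared point of $\bfp$ by construction, and tangency at that reducible cubic yields $r_\bfp\geq m_\bfp+1$ directly.
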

\begin{proof}
The equivalence of parts (1) and (2) is \autoref{cor:TangentConeSingularCubic}. The equivalence of parts (1) and (4) follows directly from the definitions of multiplicity and tangent cone. 

If part (2) is true, then so is part (3) since a singular point on a cubic has multiplicity at least two, and thus intersects any other cubic of $L_\bfp$ in a scheme of length at least $2$. That scheme has length exactly two, because eight points of the base locus are necessarily distinct.  To see that part (3) implies part (2), note that if the base locus is not reduced, then all cubics in $L_{\p}$ have the same tangent at some point $p$ in $\p$. Up to the action of $\PGL_2$, assume $p=[1:0:0]$ and write the equations of two cubics $C_0,C_1 \in L_{\p}$ as $x_1x_0^2+F_0$ and $x_1x_0^2+F_1$ where $x_1=0$ is the tangent line of $C_0$ and $C_1$ at $p$ and $F_i$ involves subquadratic terms in $x_0$. The cubic $C_0-C_1$ is singular at $p$. 
\end{proof}

\begin{corollary}
\label{cor:1077Nonreduced}
Let $\mathcal Q$ be a B\'ezoutian quatroid. 
The base locus $Z(L_{\p})$ is nonreduced for every $\p \in \mathcal S_{\mathcal Q}$ if and only if  
$\mathcal Q \leq \mathcal Q'$ for some $\mathcal Q'$ in the orbit of $\mathcal Q_{10}$.
\end{corollary}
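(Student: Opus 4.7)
The plan is to derive this corollary directly by combining Proposition \ref{prop:equivalentNonreduced} with the proposition immediately above it, which characterizes when $L_{\p}$ is tangent to $\calD$ in terms of the order relation $\calQ \leq \calQ_{10}$ (up to $\frakS_8$-orbit). Both directions then reduce to one-line implications.

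For the forward direction, suppose $\calQ \leq \calQ'$ for some $\calQ'$ in the orbit of $\calQ_{10}$. By the preceding proposition, $L_{\p}$ is tangent to $\calD$ for every $\p \in \calS_{\calQ}$, and the equivalence (1) $\iff$ (3) in Proposition \ref{prop:equivalentNonreduced} then yields that $Z(L_{\p})$ is nonreduced for every such $\p$. One may even display the culprit explicitly: writing $\calQ' = (\{I\},\{J\})$ with $|I \cap J| = 1$, any $\p \in \calS_{\calQ} \subseteq \calZ_{\calQ'}$ has $\{p_i\}_{i \in I}$ collinear and $\{p_j\}_{j \in J}$ on a conic, so the reducible cubic $\ell_I \cdot q_J$ lies in $L_{\p}$ and is singular at the unique point indexed by $I \cap J$, triggering condition (2) of Proposition \ref{prop:equivalentNonreduced}.

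For the backward direction, I would argue by contrapositive. Assume $\calQ \not\leq \calQ'$ for every $\calQ'$ in the orbit of $\calQ_{10}$. The converse half of the preceding proposition says that $L_{\p}$ is not tangent to $\calD$ for a generic $\p \in \calS_{\calQ}$, and Proposition \ref{prop:equivalentNonreduced} then forces $Z(L_{\p})$ to be reduced. Since $\calS_{\calQ}$ has only finitely many irreducible components by Theorem \ref{thm:allirreducible} (one component, except when $\calQ$ is in the orbit of $\calQ_{41}$, which has two), the generic locus is a nonempty Zariski-open subset of $\calS_{\calQ}$, producing at least one $\p$ with reduced base locus. This contradicts the hypothesis that every $\p \in \calS_{\calQ}$ has nonreduced $Z(L_{\p})$.

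There is essentially no obstacle, since the substantive work has already been packaged into the two earlier results. The only point requiring mild care is verifying that ``generic'' does produce a nonempty subset of $\calS_{\calQ}$, which is immediate from the irreducibility statement of Theorem \ref{thm:allirreducible}.
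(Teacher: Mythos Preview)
Your argument is correct and is exactly the approach the paper intends: the corollary is stated without proof precisely because it follows at once from the unnamed proposition characterizing tangency of $L_{\bfp}$ in terms of $\calQ \leq \calQ_{10}$ together with the equivalence $(1)\Leftrightarrow(3)$ of Proposition~\ref{prop:equivalentNonreduced}. The appeal to Theorem~\ref{thm:allirreducible} to guarantee a nonempty generic locus is a harmless bit of extra care; the paper takes this for granted since $\calS_{\calQ}$ is nonempty by representability.
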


We conclude this section by summarizing the results we proved before and completing the proof of the main results of the paper. 

\begin{theorem}
\label{thm:MainTheorem}
For every $\mathcal Q \in \mathfrak Q$, the numbers in the $d_\mathcal Q$ columns of  \autoref{tab:bigtable1} and \autoref{tab:bigtable2} correctly give the number of rational cubics through a generic point on $\mathcal S_{\mathcal Q}$.
\end{theorem}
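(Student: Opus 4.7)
The plan is to assemble the estimates from Sections \ref{sec:lowerbounds} and \ref{sec:expectedintersections}, partitioning $\mathfrak{Q}$ into three groups. First, for $\mathcal{Q} \notin \mathfrak{B}$, Lemma \ref{lem:havingcubicsimpliesbezoutian} immediately yields $d_{\mathcal{Q}} = 0$; the isolated case $\mathcal{Q}_{41}$ is handled by the classical Hesse pencil analysis quoted at the start of Section \ref{sec:lowerbounds}, which also gives $d_{\mathcal{Q}_{41}} = 0$. Both values agree with the table, so these cases can be disposed of in a single sentence each.

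For B\'ezoutian quatroids outside the $24$ exceptional orbits of Theorem \ref{thm:upperboundsnottight}, the lower bound from Theorem \ref{thm:lowerbounds} coincides with the upper bound $12 - m_{\mathcal{Q}}$ from \eqref{eq:ubfrommult}, so $d_{\mathcal{Q}}$ is pinched between two equal values and agrees with the table. The core of the argument, then, is to sharpen the upper bound by one for the remaining $24$ orbits. The plan is to identify these orbits as precisely those B\'ezoutian $\mathcal{Q}$ for which $\mathcal{Q} \leq \mathcal{Q}'$ for some $\mathcal{Q}'$ in the orbit of $\mathcal{Q}_{10}$: using $\mathcal{Q}_{77} \leq \mathcal{Q}_{10}$ (Remark \ref{rem:OrderIsCombinatorial}), the two lists in Theorem \ref{thm:upperboundsnottight} consolidate into that single condition. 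Corollary \ref{cor:1077Nonreduced} then forces $Z(L_{\bfp})$ to be nonreduced for every $\bfp \in \mathcal{S}_{\mathcal{Q}}$, and Proposition \ref{prop:equivalentNonreduced} upgrades this to $d_{\bfp} \leq 12 - m_{\bfp} - 1$. Invoking Lemma \ref{lem:genericepsilonp} to replace $m_{\bfp}$ by its generic value $m_{\mathcal{Q}}$ yields $d_{\mathcal{Q}} \leq 12 - m_{\mathcal{Q}} - 1$, which exactly matches the lower bound of Theorem \ref{thm:lowerbounds} and closes the gap.

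The main obstacle for this theorem is already behind us, residing in the preparatory results: the tangent-cone analysis underlying Proposition \ref{prop:tangentcones}, the Cayley--Bacharach-style base-locus arguments culminating in Corollaries \ref{cor:ExhaustiveReduced} and \ref{cor:1077Nonreduced}, and the semicontinuity of Lemma \ref{lem:semicontinuityofcardinality}. With those ingredients in hand, the proof of Theorem \ref{thm:MainTheorem} is essentially bookkeeping: partition the quatroids according to whether $\mathcal{Q} \leq \mathcal{Q}'$ for some $\mathcal{Q}'$ in the orbit of $\mathcal{Q}_{10}$, and read off the matching bounds on each side of this dichotomy.
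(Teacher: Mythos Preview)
Your proposal is correct and follows essentially the same approach as the paper's own proof: sandwich $d_{\mathcal Q}$ between the explicit lower bound of \autoref{thm:lowerbounds} and the multiplicity upper bound $12 - m_{\mathcal Q}$, then tighten the upper bound by one on the $24$ exceptional orbits via \autoref{cor:1077Nonreduced} and \autoref{prop:equivalentNonreduced}. Your treatment is in fact slightly more explicit than the paper's in separating out the non-B\'ezoutian quatroids and $\mathcal Q_{41}$ up front, whereas the paper's proof leaves those cases implicit; note, however, that $\mathcal Q_{41}$ does not actually require special handling, since it is not among the $24$ exceptional orbits and is covered by the ordinary sandwich $0 \leq d_{\mathcal Q_{41}} \leq 12 - m_{\mathcal Q_{41}} = 0$.
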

\begin{proof}
As this is the main result of the paper, we summarize the steps we took to achieve it.

For each representable quatroid $\mathcal Q$, we found an explicit representative $\p \in \mathcal S_{\mathcal Q}$ and applied \autoref{algo:symbolicrational} to $\p$ to compute the number $d_{\p}$ of rational cubics through that representative. This gave the lower bound $ d_\bfp \leq d_{\mathcal Q}$, as shown in \autoref{thm:lowerbounds}. Along the way, we showed that the reducible cubics through those representatives are either triangles or conic+secant lines, showing that such reducible orbits are generic. This allowed us to obtain the formula of \autoref{thm:epsilonQcomputation} for $m_{\mathcal Q}$. Together, these results imply 
\[
d_\p \leq d_{\mathcal Q} \leq 12-m_\mathcal Q.
\]
\autoref{thm:upperboundsnottight} characterizes when this is an equality. When it is not, \autoref{cor:1077Nonreduced} implies the generic base locus is not reduced and so by \autoref{prop:equivalentNonreduced} the inequality tightens to
\[
d_{\p} \leq d_{\mathcal Q} \leq 12-m_{\mathcal Q}-1
\]
which again by \autoref{thm:upperboundsnottight} is an equality. 
\end{proof}
In \autoref{tab:bigtable1} and \autoref{tab:bigtable2}, those B\'ezoutian quatroids satisfying any of the conditions in \autoref{prop:equivalentNonreduced} are indicated by one additional symbol in their \emph{Reducibles} column. This symbol, written in blue, indicates the type of the reducible cubic at which the intersection multiplicity of the discriminant and $L_{\p}$ is (one) larger than its multiplicity, for a generic quatroid representative $\p$. Hence, given a string of symbols of the form $\varnothing^i \triangle^j \mydef{\square}$ for $\square \in \{\varnothing,\triangle\}$, one may calculate the value in the column $d_{\mathcal Q}$ by $d_\mathcal Q = 12 - 2i -3j -\textbf{1}_{\square}$, where $\textbf{1}_{\square}$ is the indicator function of the presence of a third symbol.

\subsection{The poset of B\'ezoutian quatroids}

We conclude our work with some observations on the poset of B\'ezoutian quatroids given by the order $\leq$. In order to display these results, instead of working with the $544748$ B\'ezoutian quatroids, we work modulo the $\mathfrak S_8$-symmetry, and use the $76$ orbits instead. This allows us to fully illustrate the induced poset   in \autoref{fig: poset}, where $\mathcal Q \leq \mathcal Q'$ if there exists $\mathcal Q''$ in the orbit of $\mathcal Q'$ such that $\mathcal Q \leq \mathcal Q''$. 

We prove that the dimension of the realization space defines a grading on this poset of B\'ezoutian quatroids, in the sense of \cite[Sec. 3.1]{Stanley:EnumerativeCombinatoricsVol1}. Recall that $\mathcal Q'$ \mydef{covers} $\mathcal Q$ when $\mathcal Q'$ is minimally larger than $\mathcal Q$. We have the following result.

\begin{theorem}\label{thm: poset theorem}
 The set $\frakB$ with the order relation $\leq$ on quatroids is represented in \autoref{fig: poset}. It is partitioned  into nine layers $\frakB_0 \vvirg \frakB_8$: $\calQ \in \frakB_j$ if and only if $\dim \calS_\calQ = 8+j$. Moreover:
 \begin{enumerate}[(i)]
\item $\calQ_1$ is the only quatroid in $\frakB_8$,
\item $\calQ_{41}$ is the only quatroid in $\frakB_0$,
\item $\frakB$ is graded by dimension: if $\calQ'$ covers $\calQ$ then $\dim \calS_{\calQ'} = \dim \calS_{\calQ} +1$,
\item $\frakB$ is graded by \emph{number of conditions}: if $\calQ'$ covers $\calQ$ then $| \calI '| + |\calJ' | = |\calI| + |\calJ|+1$.
 \end{enumerate}
\end{theorem}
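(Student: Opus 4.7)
The plan is to derive all four parts from a uniform dimension formula together with an enumeration of covering relations.

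First I would establish that for every B\'ezoutian $\calQ = (\calI, \calJ) \in \frakB$,
\[
\dim \calS_\calQ \;=\; 16 \;-\; |\calI| \;-\; |\calJ|.
\]
Since $\dim \calP = 16$ and each triple $I \in \calI$ (resp.\ sextuple $J \in \calJ$) cuts out a determinantal hypersurface in $\calP$, the content of the formula is that these $|\calI| + |\calJ|$ determinantal equations are mutually independent. The B\'ezoutian axioms --- two triples meet in at most one point, two sextuples in at most four, and a triple and a sextuple in at most two --- are precisely the combinatorial conditions that prevent two such equations from sharing a common factor at a generic point. For each of the $76$ orbits, independence can be checked at the rational representatives in \texttt{RationalRepresentatives.txt} via a Jacobian rank computation in \texttt{Macaulay2}; combined with the irreducibility of $\calS_\calQ$ from \autoref{thm:allirreducible}, this pins down $\dim \calS_\calQ$.

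Parts (i) and (ii) follow immediately from the formula. The unique B\'ezoutian quatroid with $|\calI| + |\calJ| = 0$ is the trivial pair $\calQ_1$, so $\calQ_1$ is the only element of $\frakB_8$. Inspection of \autoref{tab:bigtable1} and \autoref{tab:bigtable2} confirms that $\calQ_{41}$ is the unique B\'ezoutian orbit with $|\calI| + |\calJ| = 8$ --- the eight line conditions of the MacLane matroid --- so $\calQ_{41}$ is the unique element of $\frakB_0$.

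For parts (iii) and (iv), the formula yields
\[
\dim \calS_{\calQ'} - \dim \calS_\calQ \;=\; (|\calI| + |\calJ|) - (|\calI'| + |\calJ'|),
\]
so the two gradings are equivalent. It therefore suffices to verify that covers change the condition count by exactly one, which I would do combinatorially using \autoref{rem:OrderIsCombinatorial}(b): in $\frakB$, $\calQ \leq \calQ'$ if and only if $\calI' \subseteq \calI$ and $\calJ' \subseteq \calJ \cup \calI^2$. Enumerating the covering relations programmatically across all $76$ orbits, the only potentially subtle case is a specialization in which several sextuples of $\calJ'$ simultaneously degenerate into pairs of line triples in $\calI$; whenever this would appear to change the count by more than one, a B\'ezoutian intermediate retaining one of those sextuples can be inserted, so the alleged cover is not in fact direct.

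The main obstacle is the dimension formula, specifically a uniform proof of independence of the determinantal conditions across all of $\frakB$. A clean conceptual proof would likely proceed by induction on $|\calI| + |\calJ|$, showing that adding a single B\'ezoutian-compatible condition cuts the stratum by an irreducible divisor; the orbit-by-orbit Jacobian verification bypasses this argument and suffices for the finite problem at hand.
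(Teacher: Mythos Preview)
Your proposal is correct but organized differently from the paper. You front-load the dimension formula $\dim \calS_\calQ = 16 - |\calI| - |\calJ|$ via a Jacobian check at the rational representatives, after which (iii) and (iv) become equivalent and the covers are handled programmatically. The paper reverses the logic: it proves (iv) first by a short combinatorial case analysis using \autoref{lemma:inclusion bezoutian} --- given that $\calQ'$ covers $\calQ$, it exhibits in each case an explicit B\'ezoutian intermediate with exactly one more condition than $\calQ'$ and invokes minimality (your ``subtle case'' of sextuples degenerating into line pairs is precisely one branch of this analysis). Statement (iii) then follows with no Jacobian computation at all: one extra determinantal equation cuts dimension by at most one, and irreducibility of both strata (\autoref{thm:allirreducible}) forces the drop to be exactly one. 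Your route buys the explicit dimension formula up front at the cost of an orbit-by-orbit machine verification; the paper's route gives a uniform, computation-free argument for the grading and recovers the layer dimensions only implicitly from the poset structure.
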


The proof of \autoref{thm: poset theorem} relies on the following technical result, related to \autoref{rem:OrderIsCombinatorial}: on the subset of B\'ezoutian quatroids, the second condition listed in \autoref{rem:OrderIsCombinatorial} completely characterizes the order relation.
\begin{lemma}\label{lemma:inclusion bezoutian}
 Let $\calQ = (\calI,\calJ), \calQ' = (\calI',\calJ')$ be B\'ezoutian quatroids. The following are equivalent:
 \begin{enumerate}[(i)]
  \item $\calQ \leq \calQ'$,
  \item $\calS_{\calQ} \subseteq \bar{\calS_{\calQ'}}$,
  \item $\calI' \subseteq \calI$ and $\calJ' \subseteq \calJ \cup \calI^2$, where $\calI^2$ is the set of all subsets of $\{1 \vvirg 8\}$ arising as the union of two elements of $\calI$.
 \end{enumerate}
\end{lemma}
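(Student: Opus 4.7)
The equivalence (i) $\Leftrightarrow$ (ii) holds by the very definition of the partial order $\leq$ on quatroids; the heart of the lemma is therefore the equivalence with (iii), and the plan is to prove (ii) $\Rightarrow$ (iii) and (iii) $\Rightarrow$ (ii) with rather different tools.

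For (ii) $\Rightarrow$ (iii), I would fix $\bfp \in \calS_\calQ$ and extract a sequence $\bfp_n \in \calS_{\calQ'}$ with $\bfp_n \to \bfp$. For each $I' \in \calI'$ the three indexed points are collinear in every $\bfp_n$, hence collinear in the limit; distinctness of the eight points of $\bfp \in \calP$ rules out coincidence, so $I' \in \calI$. For each $J' \in \calJ'$ the six indexed points of $\bfp_n$ lie on an irreducible conic $q_n$, an element of the compact projective space $\bbP S^2 \bbC^3$. A convergent subsequence produces a limit quadric $q$ through the six corresponding points of $\bfp$. The Bézoutianness of $\calQ$ forbids four collinear points in $\bfp$, which rules out double lines and every split other than $3+3$ when $q$ decomposes as a pair of distinct lines. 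Thus $q$ is either irreducible (giving $J' \in \calJ$) or a pair of lines inducing a partition $J' = I_1 \cup I_2$ with $I_1, I_2 \in \calI$ (giving $J' \in \calI^2$).

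For (iii) $\Rightarrow$ (ii), the plan is to construct, for each $\bfp \in \calS_\calQ$, an analytic deformation $\bfp(t)$ with $\bfp(0) = \bfp$ and $\bfp(t) \in \calS_{\calQ'}$ for generic small $t$. The hypothesis (iii) already displays every $\calQ'$-condition at $\bfp$: the $\calI'$-collinearities are inherited from $\calI$, each $J' \in \calJ' \cap \calJ$ is witnessed by an irreducible conic, and each $J' = I_1 \cup I_2 \in \calI^2 \setminus \calJ$ is witnessed by the reducible quadric $\ell_{I_1} \cup \ell_{I_2}$ with the $3+3$ split. In the last case one picks a smooth one-parameter family $q(t)$ of conics with $q(0) = \ell_{I_1} \cup \ell_{I_2}$ and $q(t)$ irreducible for $t \neq 0$, and slides the six points along $q(t)$; doing this simultaneously for every $J' \in \calJ' \setminus \calJ$ and combining with a generic perturbation that preserves the remaining $\calI'$ and $\calJ'$ conditions breaks the extra collinearities $\calI \setminus \calI'$ and the extra conic conditions $\calJ \setminus \calJ'$ at generic $t$.

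The main obstacle is ensuring that this deformation actually lands in $\calS_{\calQ'}$ and not merely in a spurious component of the closed set of configurations satisfying the (possibly degenerate) $\calQ'$-conditions. I would close this gap by combining the irreducibility of $\calS_{\calQ'}$ from \autoref{thm:allirreducible} with the explicit local parameterizations used in \autoref{thm: explicit cases irreducible}: together these guarantee that the germ near $\bfp$ of configurations satisfying the relaxed $\calQ'$-conditions is irreducible of the expected dimension and coincides with the germ of $\bar{\calS_{\calQ'}}$, so that a generic deformation of the type above lies in $\calS_{\calQ'}$ for all sufficiently small $t \neq 0$.
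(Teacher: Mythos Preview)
Your argument for (ii) $\Rightarrow$ (iii) is correct and matches the paper's in substance, though the paper is more algebraic: instead of passing to a sequential limit of conics in $\bbP S^2\bbC^3$, it notes that membership in $\overline{\calS_{\calQ'}}$ forces the closed determinantal relations of $\calQ'$ to hold at any $\bfp \in \calS_\calQ$; for each $J' \in \calJ'$ this places the six indexed points on \emph{some} quadric, necessarily unique because $\calQ$ is B\'ezoutian (no four points collinear), and one reads off directly whether that quadric is irreducible ($J' \in \calJ$) or a pair of lines splitting the six points $3+3$ ($J' \in \calI^2$). Your compactness argument reaches the same dichotomy by a slightly longer route.

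For (iii) $\Rightarrow$ (ii) the paper is much shorter: it simply invokes \autoref{rem:OrderIsCombinatorial}(b), which records (as a ``simple sufficient condition'') that $\calI' \subseteq \calI$ and $\calJ' \subseteq \calJ \cup \calI^2$ imply $\calQ \leq \calQ'$ --- the point being that (iii) gives $\calZ_\calQ \subseteq \calZ_{\calQ'}$, hence $\calS_\calQ \subseteq \calZ_{\calQ'}$. Your deformation construction is more explicit but the closing step has a genuine gap: \autoref{thm: explicit cases irreducible} furnishes local parameterizations only for six particular quatroids, not a general statement about the germ of $\overline{\calS_{\calQ'}}$ at an arbitrary boundary configuration, and the global irreducibility of $\calS_{\calQ'}$ from \autoref{thm:allirreducible} says nothing about local branches of $\calZ_{\calQ'}$ at points outside $\calS_{\calQ'}$. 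You correctly identify the obstacle (that the deformation must land in $\calS_{\calQ'}$ rather than a spurious component of $\calZ_{\calQ'}$), but the two results you cite do not close it.
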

\begin{proof}
 The equivalence of (i) and (ii) is the definition of the order relation. The fact that (iii) implies (ii) follows from \autoref{rem:OrderIsCombinatorial}. It remains to show that (ii) implies (iii). If $\calS_{\calQ} \subseteq \overline{\calS_{\calQ'}}$, every $\bfp \in \calS_{\calQ}$ satisfies the linear and quadratic relations imposed by $\calQ'$. This immediately implies $\calI' \subseteq \calI$. To conclude, let $J \in \calJ'$, and without loss of generality assume $J = 123456$. Since $\calQ$ is B\'ezoutian, there is a unique quadric $q_{123456}$ through a generic $\bfp \in \calS_{\calQ}$. If this quadric is a conic, then $ 123456 \in \calJ$. Otherwise it is the union of two lines: since $\calQ$ is B\'ezoutian, no four points lie on a line. Hence, up to relabeling, $q_{123456} = \ell_{123}\ell_{456}$ showing $123,456 \in \calI$ and $123456 \in \calI^2$ as desired.
\end{proof}

A technical consequence of \autoref{lemma:inclusion bezoutian} is that the \emph{closure} step in \texttt{step 6} of  \autoref{algo:allConicalExtensions} does not alter the pair, whenever the quatroid that is being generated is B\'ezoutian. Implicitly, this fact yields the grading of $\frakB$ by number of conditions, as one can see in the proof of \autoref{thm: poset theorem}.

\begin{proof}[Proof of \autoref{thm: poset theorem}]
We first prove statement (iv). Let $\calQ = (\calI,\calJ), \calQ'=(\calI',\calJ')$ be quatroids and suppose $\calQ'$ covers $\calQ$. 
By \autoref{lemma:inclusion bezoutian},  $\calI' \subseteq \calI$ and $\calJ' \subseteq \calJ \cup \calI^2$. We consider several cases:
\begin{itemize}
 \item Suppose there is a condition $I \in \calI \setminus \calI'$. 
 \begin{itemize}
 \item If for every $J' \in \calJ$ we have $I \not\subseteq J'$ then we claim $\calI = \calI' \cup \{ I \}$ and $\calJ = \calJ'$: this is immediate because  $\calQ'' = (\calI' \cup \{ I \}, \calJ')$ satisfies B\'ezout's criteria and $\mathcal Q'$ is \emph{minimally} larger than $\mathcal Q$.
 \item Otherwise, set $J'_* = \{J' \in \mathcal J' \mid I \subseteq J'\}$ and $I'_* = \{J'\setminus I \mid J' \in J'_*\}$. Consider $\mathcal Q'' = (\calI' \cup I'_* \cup \{I\},\mathcal J'\setminus J'_*)$. By \autoref{lemma:inclusion bezoutian} $\mathcal Q'' \leq \mathcal Q'$. Moreover, there is exactly one more condition in $\mathcal Q''$ than there is in $\mathcal Q'$. One may verify that $\mathcal Q''$ is B\'ezoutian and so, as before, we conclude $\mathcal Q''=\mathcal Q$. 
\end{itemize} 
\item If $\calI = \calI'$, then $\calJ' \cap \calI^2 = \calJ' \cap {\calI'}^2 = \emptyset$ because $\calQ'$ satisfies B\'ezout's weak criteria. This implies $\calJ' \subseteq \calJ$, and by minimality $\calJ = \calJ' \cup \{ J \}$ for some $J$.
\end{itemize}
This shows statement (iv). Statement (iii) is then a consequence of statement (iv) and the irreducibility of the strata $\calS_{\calQ}$. The structure of the poset in \autoref{fig: poset} and statement (i) and (ii) follow by direct computation, which can be done in a purely combinatorial way using \autoref{lemma:inclusion bezoutian}.
\end{proof}

\begin{figure}
 \includegraphics[scale=.12]{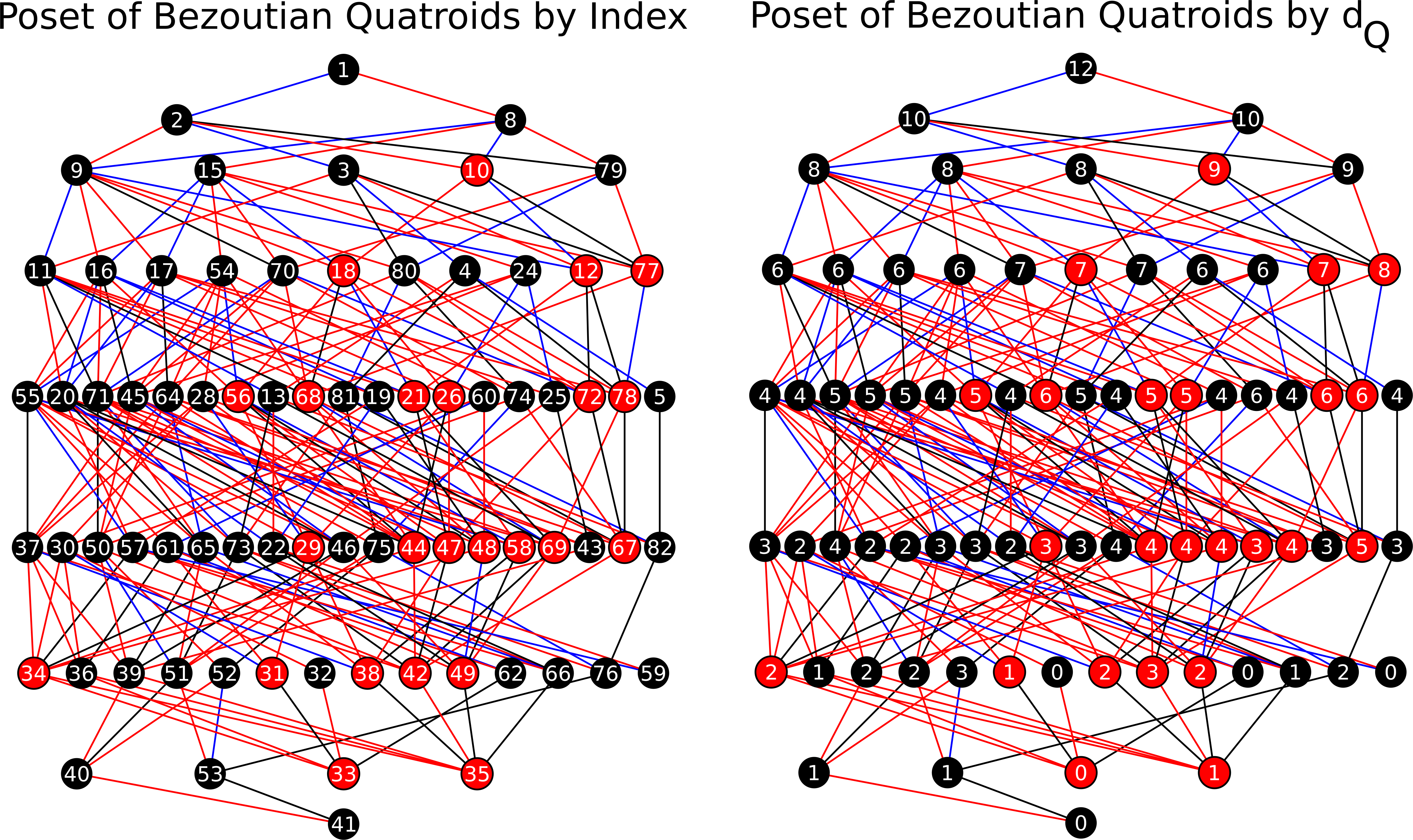}
 \caption{The poset of B\'ezoutian quatroids: Red nodes are quatroids $\calQ \leq \calQ_{10}$. A red connection indicates that the quatroids differ by precisely one element, in $\calI$. A blue connection indicates the difference is one element, in $\calJ$. A black connection indicates any other difference, occurring from sextuples ``breaking'' into lines. The left picture uses the quatroid orbit indices as labels, and the right picture uses the corresponding values of $d_{\calQ}$ as labels. Note that the the value $d_\calQ$ never increases moving down in the poset; moreover, if it does not decrease, then a black node becomes a red node}\label{fig: poset}
\end{figure}

\section{Concluding Remarks}
\label{sec:conclusion}
\subsection{Positive certificates for non-rationality}\label{subsec: nonrat certificate}

 \autoref{thm:MainTheorem} allows one to design \emph{non-rationality certificates} for cubic curves through points in special position. More precisely, there are several quatroid strata $\mathcal S_{\mathcal Q}$ satisfying $d_\calQ = 0$: in these cases, for any $\p \in \mathcal S_{\mathcal Q}$, there are no rational cubics through $\bfp$. The same holds passing to the closure, proving the following result.

\begin{theorem}\label{thm:norationalcubic}
Let $\p \in \mathcal S_{\mathcal Q}$ for some $\calQ \leq \calQ'$ with 
\[
\mathcal Q'  \text{ in the orbit of an element of } \{\mathcal Q_6, \mathcal Q_{32}, \mathcal Q_{41}, \mathcal Q_{59}, \mathcal Q_{62}, \mathcal Q_{121}\}.
\]
Then there are no rational cubics passing through $\bfp$. 
\end{theorem}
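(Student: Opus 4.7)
The plan is to assemble three previously established facts: the computation of $d_{\calQ'}$ for each of the six named quatroids, the automatic vanishing of $d_{\bfp}$ on non-B\'ezoutian strata, and the semicontinuity principle that propagates the vanishing to the closure. The work has already been done in earlier sections; this result is essentially a corollary of \autoref{thm:MainTheorem}.

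First, I would read off from \autoref{thm:MainTheorem} (equivalently from \autoref{thm:lowerbounds}, which lists the $d_{\calQ}$ values for all B\'ezoutian quatroid orbits) that
\[
d_{\calQ_{32}} = d_{\calQ_{41}} = d_{\calQ_{59}} = d_{\calQ_{62}} = 0.
\]
In each of these four cases, the generic representative of $\calS_{\calQ'}$ admits no irreducible rational cubic, so by the very definition of $d_{\calQ'}$ the value vanishes.

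Next, I would handle $\calQ_6$ and $\calQ_{121}$, which are not listed in the tables of \autoref{thm:lowerbounds} because they are \emph{not} B\'ezoutian (as recorded by \autoref{cor:orbitcorollary}, only $76$ of the $125$ quatroid orbits are B\'ezoutian, and these are precisely those enumerated in the tables). By \autoref{lem:havingcubicsimpliesbezoutian}, no configuration $\bfp \in \calS_{\calQ'}$ satisfies any of the equivalent conditions of \autoref{prop:FibreIsFinite}; hence, by the last assertion of \autoref{prop:FibreHas12Points}, $d_{\bfp} = 0$ for every $\bfp \in \calS_{\calQ'}$, and in particular $d_{\calQ'} = 0$.

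Finally, I would extend from these six strata to their closures using \autoref{cor:lowerbounds}: if $\calQ \leq \calQ'$, then $\calS_{\calQ} \subseteq \overline{\calS_{\calQ'}}$, and so every $\bfp \in \calS_\calQ$ satisfies $d_{\bfp} \leq d_{\calQ'} = 0$. Since $d_{\bfp}$ is a nonnegative integer counting rational cubics through $\bfp$ with multiplicity, this forces $d_{\bfp} = 0$, i.e.\ no rational cubic passes through $\bfp$. Invariance of $d_{\bfp}$ under relabeling the points of $\bfp$ handles the full $\mathfrak S_8$-orbit of each listed $\calQ'$. There is no real obstacle here: the heavy lifting took place in the arguments leading to \autoref{thm:MainTheorem} and in \autoref{lem:havingcubicsimpliesbezoutian}, and the only genuinely new content is identifying which quatroids serve as minimal certificates of non-rationality, i.e.\ the maximal elements (under $\leq$) of the set $\{\calQ : d_\calQ = 0\}$.
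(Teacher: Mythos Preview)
Your proposal is correct and mirrors the paper's own argument: the paper states this result as an immediate consequence of \autoref{thm:MainTheorem} together with the semicontinuity passing to the closure, and separately notes (just as you do) that $\calQ_6$ and $\calQ_{121}$ are non-B\'ezoutian so that no irreducible cubics at all pass through such configurations. The only content beyond what the paper writes is your explicit invocation of \autoref{cor:lowerbounds} and \autoref{lem:havingcubicsimpliesbezoutian}, which simply makes the implicit references precise.
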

 
We point out that if $\calQ \leq \calQ_6$ or $\calQ \leq \calQ_{121}$, then $\calQ$ is non-B\'ezoutian: in this case, there are no irreducible cubics at all passing through $\bfp$.

We remark that $\mathcal Q_{59}$ and $\mathcal Q_{62}$ are exhaustive B\'ezoutian quatroids so the matroid underlying the (reduced) base locus $Z(L_{\p})$ of any $\bfp \in \calS_{\calQ}$ is well-defined; in these cases, this is the non-Fano matroid $\mathcal Q_{32}$. Configurations of $\calQ_{59}$ appear in \cite{Fie:PencilsCubicsEightPoints}, in the study of the topology of singular cubics.
In a sense, configurations $\bfp$ as in \autoref{thm:norationalcubic} are \emph{forbidden configurations} on a rational cubic. More precisely, we have the following consequence.
\begin{corollary}
\label{cor:positivecertificate}
Let $C \subset \mathbb{P}_{\mathbb{C}}^2$ be an irreducible cubic containing a configuration $\p \in \calS_\calQ$ where
\[\calQ \leq \calQ' \quad  \text{ for some }\mathcal Q' \text{ in the orbit of an element of }  \quad \{\mathcal Q_6, \mathcal Q_{32}, \mathcal Q_{41}, \mathcal Q_{59}, \mathcal Q_{62}, \mathcal Q_{121}\}.
\]
Then $C$ is not rational.
These forbidden quatroids are illustrated in \autoref{fig:norationalcubics}.
\end{corollary}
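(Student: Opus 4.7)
The approach is to take the contrapositive of Theorem \ref{thm:norationalcubic}. Suppose towards contradiction that $C$ is rational; by definition $C$ is a singular irreducible cubic, and since $C$ passes through the eight points of $\bfp$ we have $C \in L_\bfp \cap \calC$, so in particular $d_\bfp \geq 1$.

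The key step is to establish $d_\bfp = 0$ under the hypothesis on $\bfp$. By assumption, $\bfp \in \calS_\calQ$ with $\calQ \leq \calQ'$ for some $\calQ'$ in the orbit of one of the six listed quatroids. The very definition of the order relation gives $\calS_\calQ \subseteq \overline{\calS_{\calQ'}}$, so Corollary \ref{cor:lowerbounds} yields $d_\bfp \leq d_{\calQ'}$. The tables compiled in \autoref{thm:MainTheorem} record $d_{\calQ'} = 0$ for each of $\calQ_6, \calQ_{32}, \calQ_{41}, \calQ_{59}, \calQ_{62}, \calQ_{121}$, so $d_\bfp = 0$. This contradicts $d_\bfp \geq 1$, and therefore $C$ is not rational.

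There is no real obstacle here: the corollary is essentially a logical rephrasing of Theorem \ref{thm:norationalcubic}, with $C$ playing the role of a putative rational cubic through $\bfp$. The only point to be mindful of is that the argument must apply to every $\bfp$ in the closed locus $\overline{\calS_{\calQ'}}$, not merely to a generic representative of $\calS_{\calQ'}$; this is precisely the upper-semicontinuity content bundled into Corollary \ref{cor:lowerbounds}, so no additional machinery is needed beyond what has already been developed in the paper.
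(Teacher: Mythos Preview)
Your proof is correct and matches the paper's approach: the paper presents this corollary as an immediate consequence of Theorem~\ref{thm:norationalcubic} without giving a separate proof, and your argument simply unpacks that implication (a rational cubic through $\bfp$ would witness $d_\bfp \geq 1$, contradicting $d_\bfp = 0$). Your additional care in invoking Corollary~\ref{cor:lowerbounds} to handle every $\bfp$ in the closure, rather than just the generic one, is exactly what justifies Theorem~\ref{thm:norationalcubic} itself, so you are really just restating the chain of reasoning the paper already used.
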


\begin{figure}[!htpb]
\includegraphics[scale=0.4]{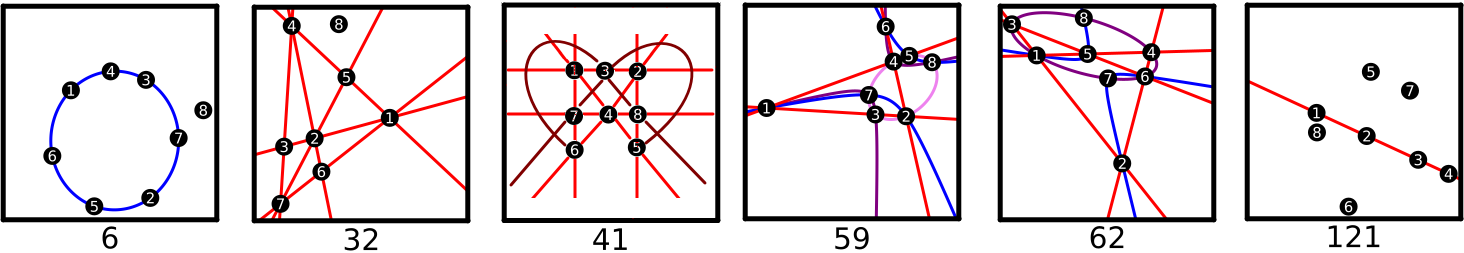}
\caption{The six minimal quatroid orbits forbidding rational cubics through a representation.}
\label{fig:norationalcubics}
\end{figure}

To the extent of our knowledge, the \emph{existence} of forbidden configurations guaranteeing non-rationality gives a novel way to prove non-rationality of a variety. In this sense, these are \emph{positive certificates} of non-rationality.
We leave open the problem of studying positive non-rationality certificates for curves of higher degree, and varieties of higher dimension.

\subsection{A finer stratification}

We remark that the values $d_\calQ$ are the number of rational cubics through a \emph{generic} configuration $\bfp \in \calS_\calQ$. A natural question is whether this is the number of rational cubics through \emph{every} configuration $\bfp \in \calS_{\calQ}$. This is not the case, as observed in the following construction. 

Consider the configuration $\bfp_z$, depending on a complex parameter $z$ and described by the matrix
 \begin{equation*}
\bfp_z = \left[ \begin{array}{cccccccc}
 1&4&9&16&25&36&1&1\\
 1&2&3&4&5&6&1&-1\\
 1&1&1&1&1&1&0&z
\end{array} \right].
 \end{equation*}
For generic $z \in \bbC$, the configuration $\bfp_z$ represents quatroid $\calQ_{2}$: the points $p_1 \vvirg p_6$ lie on the conic $x_1^2 - x_0x_2 = 0$, and there is no other linear or quadratic relation. For generic $z$, $L_{\bfp_z}$ intersects $\calD$ in the reducible cubic $C = q_{123456}\ell_{78}$ with multiplicity $2 = \mult_{\calD}(C)$, and $10$ additional rational cubics. Hence $d_{\bfp_z} = d_\calQ = 10$.

Let $\bfp^*$ be the configuration for $z = 0$, which represents $\calQ_2$ as well. In this case, the line $\ell_{78}$ is tangent to the conic $q_{123456}$. The pencil $L_{\bfp^*}$ intersects $\calD$ at $C$ with intersection multiplicity $3 = \mult_\calD(C)$ and at an additional $9$ distinct rational cubics.

This construction shows that there are special configurations $\bfp^*$ on the quatroid stratum $\calS_{\calQ_2}$ satisfying $d_{\bfp^*} < d_{\calQ_2}$. A similar example can be constructed on the other maximal non-uniform quatroid $\calQ_{8}$. A slightly different example can be constructed on $\calQ_{78}$: in this case, for a generic choice of $\bfp \in \calS_{\calQ_{78}}$, the only reducible cubic on $L_{\bfp}$ is a triangle $C = \ell_1\ell_2\ell_3$ and $L_{\bfp}$ intersects $\calD$ in $C$ with multiplicity $3 = \mult_{\calD}(C)$ and in $9$ rational cubics, so that $d_\bfp = d_{\calQ_{78}} = 9$. There is a locus in $\calS_{\calQ_{78}}$ where the triangle $C$ degenerates to an asterisk: here $\mult_\calD(C) = 4$ and $d_\bfp = 8 < d_\calQ$.

This phenomenon occurs on other strata. Interestingly, there are strata, such as $\calS_{\calQ_{40}}$, with the property that $d_\calQ = 1$ and with a locus of configurations $\bfp$ such that $d_{\bfp} = 0$. Understanding these loci, with no rational cubics through them, would provide other \emph{non-rationality certificates}, in the sense of \autoref{subsec: nonrat certificate}.

We expect the special loci described in this section always arise with either a conic$+$line pair $C = q\ell$ degenerating to a conic$+$tangent pair, or with a triangle $C = \ell_1\ell_2\ell_3$ degenerating to an asterisk. The second phenomenon is linear but it cannot be detected simply by the matroid underlying $\bfp$; however, it is detected by the underlying \emph{discriminantal arrangement} \cite{ABFKSSL:LikelyhoodDegenerations}. We plan to further investigate higher order versions of the discriminantal arrangement in future work. 

\subsection{Toward higher degree and higher genus}
Given two integers $d$ and $g$, one may consider the locus $V_{d,g} \subseteq \bbP S^d \bbC^3$ of plane curves of degree $d$ and genus $g$. In \cite{Harris:SeveriProblem}, Harris answered a conjecture of Severi \cite{Severi:Vorlesungen}, proving that $V_{d,g}$ is irreducible of dimension $3d+g-1$. This inspired a body of work surrounding the natural enumerative problem: 
\begin{center}
\emph{Given $3d+g-1$ points in $\bbP_{\mathbb{C}}^2$, how many elements of $V_{d,g}$ interpolate them?}
\end{center}
For generic points, this amounts to computing the degree of (the closure of) $V_{d,g}$: when $g = 0$, the answer is Kontsevich's formula \cite{Kontsevich1994} and a recursive formula was given for any genus in \cite{CapHar:CountingPlaneCurves}. The present paper dealt with non-generic instances of the problem when $(d,g)=(3,0)$, that is the case of rational cubics.  Throughout our investigation, we enjoyed a number of nice properties:
\begin{itemize}
\item the cubic discriminant is the union of rational cubics and reducible cubics,
\item the discriminant is cut out by a manageable polynomial,
\item the number of rational cubics through eight generic points is of modest size,
\item there is a known classification of orbit closures of rational and reducible cubics.
\end{itemize}
We propose the study of curves of higher degree and genus, with similar, higher order methods.

For instance, there are $620$ rational quartics through $11 = 3\cdot 4 + 0 - 1$ generic points. Despite all the ``good'' properties mentioned above failing in this setting, numerical methods can generate experimental data. We now consider the problem of computing all rational quartics through two interesting matroid strata on $11$ points.
A \mydef{L\"uroth quartic} is any quartic curve which goes through the $10={{5}\choose{2}}$ intersection points of five lines. The set of L\"uroth quartics forms an unwieldy hypersurface of degree $54$ in the space $\mathbb{P}S^4\mathbb{C}^3$ of homogeneous quartics \cite{NumericalNP,OttSer:HypersurfaceLuroth,Ott:Computational}. These quartics inspire the definition of the \mydef{L\"uroth matroid} on $11$ points, which we write in our quatroid format:
\[
\mydef{\mathcal Q_{\textrm{L\"uroth}}} = (\{1367,159\underline{10},246\underline{10},2578,3489\},\{\}).
\]
Numerical computations suggest that there are $40$ rational quartics through a L\"uroth configuration.  In \autoref{fig:Luroth} we display one example of a L\"uroth configuration and the $12$ real rational quartics which interpolate it.
\begin{figure}[!htpb]
\includegraphics[scale=0.37]{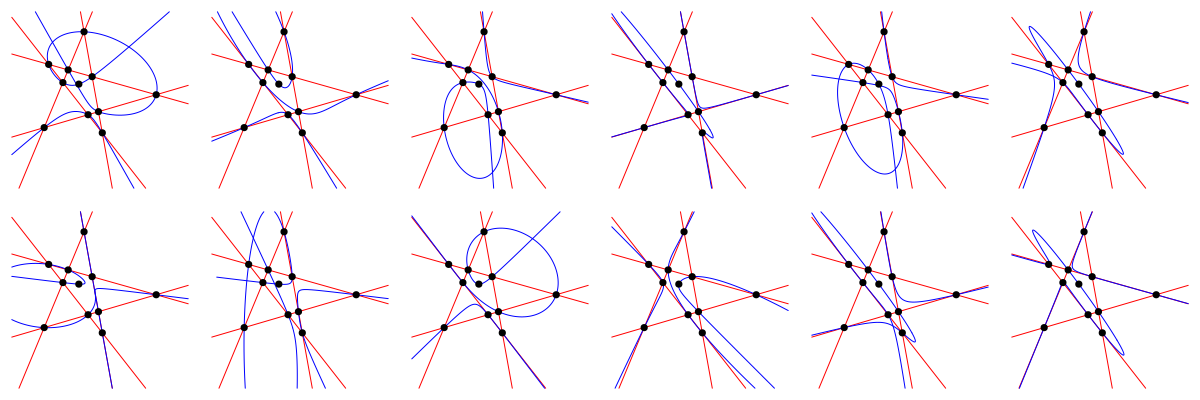}
\caption{Twelve real rational quartics (of the forty over $\mathbb{C}$) through a  L\"uroth configuration of $11$ points. The five lines are drawn in red and the rational quartics in blue.}\label{fig:Luroth}
\end{figure}

A more restricted matroid is suggested by a \mydef{regular pentagon configuration} \cite{Ziegler} which imposes five additional line conditions on the eleventh point. The underlying matroid is 
\[
\mydef{\mathcal Q_{\textrm{Pentagon}}} = (\{1367,159\underline{10},246\underline{10},2578,3489,23\underline{11},18\underline{11},69\underline{11},7\underline{10}\, \underline{11},45\underline{11}\},\{\})
\]
Such a configuration can  be realized over $\mathbb{R}$ but not $\mathbb{Q}$, and it is unique up to the action of $\PGL_2$. We invite the reader to draw this configuration. A numerical calculation suggests that there are $10$ rational quartics interpolating the regular pentagon configuration, none of which are real.

\newpage 
\section*{Appendix}\label{appendix}

{\footnotesize{
\begin{table}[!bhtp]
\begin{tabular}{|p{0.36\linewidth}|p{0.49\linewidth}|p{0.15\linewidth}|}\hline
Name  & Description & Relevant  Results\\ \hline  

\begin{tabular}{c}\\ {\texttt{Quatroids.jl}}\end{tabular}   & The \texttt{julia} package \texttt{Quatroids.jl}. All commands loaded by this package are indicated by \mydef{blue font} & \\ \hline

\begin{tabular}{c}\\ \alert{\texttt{SimpleMatroids38.txt}} \end{tabular}  & File extracted from the database \url{https://www-imai.is.s.u-tokyo.ac.jp/~ymatsu/matroid/} listing all simple rank $3$ matroids on eight elements &\begin{tabular}{c}  \end{tabular}\\ \hline

\begin{tabular}{c}\\ \mydef{\texttt{GenerateAllMatroids()}}  \end{tabular} & Parses the file \texttt{SimpleMatroids38.txt} to obtain an exhaustive list of simple $\mathbb{C}$-representable matroids of rank at most three on eight elements &\begin{tabular}{c} \\ \autoref{thm:candidatequatroids} \end{tabular}\\ \hline 

\begin{tabular}{c}\\ \mydef{\texttt{AllConicExtensions(M)}} \end{tabular} & Runs \autoref{algo:allConicalExtensions} on a matroid $M$ represented by nonbases of size three &\begin{tabular}{c} \autoref{thm:candidatequatroids} \end{tabular}\\ \hline 
 
\begin{tabular}{c}\\ \mydef{\texttt{GenerateAllCandidateQuatroids()}} \end{tabular}  & Runs \autoref{algo:allConicalExtensions} all $\mathbb{C}$-representable matroids of rank at most three on eight elements & \begin{tabular}{c}\autoref{thm:candidatequatroids}\\ \autoref{tab:bigtable1} \\ \autoref{tab:bigtable2}  \end{tabular}\\ \hline 
 
\begin{tabular}{c}\\ \mydef{\texttt{OrbitSizes()}}  \end{tabular}& Computes the sizes of each orbit of candidate quatroids & \begin{tabular}{c}\autoref{thm:candidatequatroids} \\  \autoref{cor:orbitcorollary} \\ \autoref{tab:bigtable1} \\ \autoref{tab:bigtable2} \end{tabular}\\ \hline 

\begin{tabular}{c}\\ \mydef{\texttt{Bezoutian()}}  \end{tabular}& Returns a boolean vector of length $126$ whose $i$-th entry indicates whether $\mathcal Q_i$ is B\'ezoutian & \begin{tabular}{c}\autoref{def:Bezoutian}\\ \autoref{tab:bigtable1} \\ \autoref{tab:bigtable2} \end{tabular} \\ \hline 
 
\begin{tabular}{c}\\ \alert{\texttt{RationalRepresentatives.txt}}  \end{tabular} & A file, given in \texttt{julia} (\texttt{.txt}) and \texttt{Macaulay2} (\texttt{.m2}) format, whose $i$-th line is a $3 \times 8$ matrix of integers whose columns represent $\mathcal Q_i$ &\begin{tabular}{c}\\ \autoref{thm:representability}\end{tabular} \\ \hline 
 
\begin{tabular}{c}\\ {\texttt{TestingRepresentatives.m2}}  \end{tabular}& A \texttt{Macaulay2} script which confirms that each representative in \texttt{RationalRepresentatives.txt} represents the quatroid claimed & \begin{tabular}{c}\\ \autoref{thm:representability}\end{tabular}  \\ \hline 
 
\begin{tabular}{c}\\ \mydef{\texttt{ReducedBaseLocus(Q)}} \end{tabular}& Indicates if the quatroid $\mathcal Q$ has reduced base locus due to \autoref{cor:ExhaustiveReduced} & \begin{tabular}{c} \autoref{lem:Q63} \end{tabular}\\ \hline 
 
\begin{tabular}{c}\\ \mydef{\texttt{QuatroidReductions()}} \end{tabular}& Iteratively reduces each quatroid based on the four conditions described in \autoref{sec:IrreducibilityOfStrata} &\begin{tabular}{c}  \autoref{lem:ABCreductions} \end{tabular}\\ \hline 
 
\begin{tabular}{c}\\ \mydef{\texttt{Modifications(Q)}} \end{tabular}& Computes all $\mathcal Q'$ such that $\mathcal Q \leadsto \mathcal Q'$ &  \begin{tabular}{c} \autoref{lem:Modifications} \end{tabular}  \\ \hline

\begin{tabular}{c}\\ {\texttt{IrreducibilityQ21.m2} etc} \end{tabular}& A \texttt{Macaulay2} script which establishes the irreducibility of $\mathcal Q_{21}$ as described in the proof of \autoref{thm: explicit cases irreducible}. Similar files exist for $\mathcal Q_{25},\mathcal Q_{38},\mathcal Q_{58},\mathcal Q_{101},$ and $\mathcal Q_{123}$ &\begin{tabular}{c}\\ \autoref{thm: explicit cases irreducible} \end{tabular}  \\ \hline 

\begin{tabular}{c}\\ \alert{\texttt{cubicInvs.m2}} \end{tabular}& A \texttt{Macaulay2} file containing the cubic discriminant & \begin{tabular}{c} \autoref{algo:symbolicrational} \end{tabular} \\ \hline 
 
\begin{tabular}{c}\\ {\texttt{countReducibleCubics.m2}} \end{tabular}& Applies \autoref{algo:symbolicrational} to each of the rational representatives of B\'ezoutian quatroids in \texttt{RationalRepresentatives.m2} & \begin{tabular}{c} \autoref{thm:lowerbounds} \\   \autoref{tab:bigtable1} \\ \autoref{tab:bigtable2} \end{tabular} \\ \hline 
 
\begin{tabular}{c}\\ \mydef{\texttt{QuatroidsWeakUpperBounds()}}  \end{tabular}& Evaluates the formula for $m_{\mathcal Q}$ from \autoref{thm:epsilonQcomputation} for each B\'ezoutain quatroid $\mathcal Q$ & \begin{tabular}{c}\autoref{thm:upperboundsnottight} \end{tabular} \\ \hline

 \begin{tabular}{c} \mydef{\texttt{ContainedIn10(Q)}} \\ \mydef{\texttt{ContainedIn77(Q)}} \end{tabular}& Checks $\mathcal Q \leq \mathcal Q_{10}$ and $\mathcal Q \leq \mathcal Q_{77}$ respectively & \begin{tabular}{c}\autoref{thm:upperboundsnottight} \end{tabular} \\ \hline 
 
\begin{tabular}{c}\\  {\texttt{tangentConesPlaneCubics.m2}} \end{tabular}& Computes the tangent cones of each singular cubic  & \begin{tabular}{c} \autoref{prop:tangentcones} \\ \autoref{tab:orbits} \\ \autoref{tab:orbittangentcones} \end{tabular} \\
\hline

\end{tabular}
\caption{Descriptions of auxiliary files}
\label{tab:software}
\end{table}}}

{\scriptsize{
\begin{table}[!htpb]
\begin{tabular}{|r|c|rl|c|c|}
 \hline 
 Num  & $ | $ Orb $ | $ &  Lines  &  Conics  &  Reducibles  & $ d_Q $\\ \hline \hline 
$ 1 $ & $ 1 $ & $  {\color{red}{\{\}}} $ & $  {\color{blue}{\{\}}} $ & $  $ & $ 12 $ \\
$ 2 $ & $ 28 $ & $  {\color{red}{\{\}}} $ & $  {\color{blue}{\{123456\}}} $ & $ \varnothing^1 $ & $ 10 $ \\
$ 3 $ & $ 210 $ & $  {\color{red}{\{\}}} $ & $  {\color{blue}{\{123456\,\,123478\}}} $ & $ \varnothing^2 $ & $ 8 $ \\
$ 4 $ & $ 420 $ & $  {\color{red}{\{\}}} $ & $  {\color{blue}{\{123456\,\,123478\,\,125678\}}} $ & $ \varnothing^3 $ & $ 6 $ \\
$ 5 $ & $ 105 $ & $  {\color{red}{\{\}}} $ & $  {\color{blue}{\{123456\,\,123478\,\,125678\,\,345678\}}} $ & $ \varnothing^4 $ & $ 4 $ \\
$ 6 $ & $ 8 $ & $  {\color{red}{\{\}}} $ & $  {\color{blue}{\{1234567\}}} $ & NB & $  {\color{red}{0}} $ \\
$ 7 $ & $ 1 $ & $  {\color{red}{\{\}}} $ & $  {\color{blue}{\{12345678\}}} $ & NB & $  {\color{red}{0}} $ \\
$ 8 $ & $ 56 $ & $  {\color{red}{\{123\}}} $ & $  {\color{blue}{\{\}}} $ & $ \varnothing^1 $ & $ 10 $ \\
$ 9 $ & $ 840 $ & $  {\color{red}{\{123\}}} $ & $  {\color{blue}{\{124567\}}} $ & $ \varnothing^2 $ & $ 8 $ \\
$ 10 $ & $ 168 $ & $  {\color{red}{\{123\}}} $ & $  {\color{blue}{\{145678\}}} $ & $ \varnothing^1 {\color{blue}{\varnothing}} $ & $ 9 $ \\
$ 11 $ & $ 3360 $ & $  {\color{red}{\{123\}}} $ & $  {\color{blue}{\{124567\,\,134568\}}} $ & $ \varnothing^3 $ & $ 6 $ \\
$ 12 $ & $ 840 $ & $  {\color{red}{\{123\}}} $ & $  {\color{blue}{\{124567\,\,345678\}}} $ & $ \varnothing^2 {\color{blue}{\varnothing}} $ & $ 7 $ \\
$ 13 $ & $ 3360 $ & $  {\color{red}{\{123\}}} $ & $  {\color{blue}{\{124567\,\,134568\,\,234578\}}} $ & $ \varnothing^4 $ & $ 4 $ \\
$ 14 $ & $ 168 $ & $  {\color{red}{\{123\}}} $ & $  {\color{blue}{\{1245678\}}} $ & NB & $  {\color{red}{0}} $ \\
$ 15 $ & $ 840 $ & $  {\color{red}{\{123\,\,145\}}} $ & $  {\color{blue}{\{\}}} $ & $ \varnothing^2 $ & $ 8 $ \\
$ 16 $ & $ 3360 $ & $  {\color{red}{\{123\,\,145\}}} $ & $  {\color{blue}{\{124678\}}} $ & $ \varnothing^3 $ & $ 6 $ \\
$ 17 $ & $ 2520 $ & $  {\color{red}{\{123\,\,145\}}} $ & $  {\color{blue}{\{234567\}}} $ & $ \varnothing^3 $ & $ 6 $ \\
$ 18 $ & $ 3360 $ & $  {\color{red}{\{123\,\,145\}}} $ & $  {\color{blue}{\{234678\}}} $ & $ \varnothing^2 {\color{blue}{\varnothing}} $ & $ 7 $ \\
$ 19 $ & $ 1680 $ & $  {\color{red}{\{123\,\,145\}}} $ & $  {\color{blue}{\{124678\,\,135678\}}} $ & $ \varnothing^4 $ & $ 4 $ \\
$ 20 $ & $ 10080 $ & $  {\color{red}{\{123\,\,145\}}} $ & $  {\color{blue}{\{124678\,\,234567\}}} $ & $ \varnothing^4 $ & $ 4 $ \\
$ 21 $ & $ 6720 $ & $  {\color{red}{\{123\,\,145\}}} $ & $  {\color{blue}{\{124678\,\,235678\}}} $ & $ \varnothing^3 {\color{blue}{\varnothing}} $ & $ 5 $ \\
$ 22 $ & $ 5040 $ & $  {\color{red}{\{123\,\,145\}}} $ & $  {\color{blue}{\{124678\,\,135678\,\,234567\}}} $ & $ \varnothing^5 $ & $ 2 $ \\
$ 23 $ & $ 840 $ & $  {\color{red}{\{123\,\,145\}}} $ & $  {\color{blue}{\{2345678\}}} $ & NB & $  {\color{red}{0}} $ \\
$ 24 $ & $ 840 $ & $  {\color{red}{\{123\,\,145\,\,167\}}} $ & $  {\color{blue}{\{\}}} $ & $ \varnothing^3 $ & $ 6 $ \\
$ 25 $ & $ 840 $ & $  {\color{red}{\{123\,\,145\,\,167\}}} $ & $  {\color{blue}{\{234567\}}} $ & $ \varnothing^4 $ & $ 4 $ \\
$ 26 $ & $ 5040 $ & $  {\color{red}{\{123\,\,145\,\,167\}}} $ & $  {\color{blue}{\{234568\}}} $ & $ \varnothing^3 {\color{blue}{\varnothing}} $ & $ 5 $ \\
$ 27 $ & $ 840 $ & $  {\color{red}{\{123\,\,145\,\,167\}}} $ & $  {\color{blue}{\{2345678\}}} $ & NB & $  {\color{red}{0}} $ \\
$ 28 $ & $ 6720 $ & $  {\color{red}{\{123\,\,145\,\,167\,\,246\}}} $ & $  {\color{blue}{\{\}}} $ & $ \varnothing^4 $ & $ 4 $ \\
$ 29 $ & $ 20160 $ & $  {\color{red}{\{123\,\,145\,\,167\,\,246\}}} $ & $  {\color{blue}{\{234578\}}} $ & $ \varnothing^4 {\color{blue}{\varnothing}} $ & $ 3 $ \\
$ 30 $ & $ 5040 $ & $  {\color{red}{\{123\,\,145\,\,167\,\,246\,\,257\}}} $ & $  {\color{blue}{\{\}}} $ & $ \varnothing^5 $ & $ 2 $ \\
$ 31 $ & $ 5040 $ & $  {\color{red}{\{123\,\,145\,\,167\,\,246\,\,257\}}} $ & $  {\color{blue}{\{345678\}}} $ & $ \varnothing^5 {\color{blue}{\varnothing}} $ & $ 1 $ \\
$ 32 $ & $ 1680 $ & $  {\color{red}{\{123\,\,145\,\,167\,\,246\,\,257\,\,347\}}} $ & $  {\color{blue}{\{\}}} $ & $ \varnothing^6 $ & $ 0 $ \\
$ 33 $ & $ 5040 $ & $  {\color{red}{\{123\,\,145\,\,167\,\,246\,\,257\,\,347\,\,358\}}} $ & $  {\color{blue}{\{\}}} $ & $ \varnothing^4\triangle^1 {\color{blue}{\triangle}} $ & $ 0 $ \\
$ 34 $ & $ 20160 $ & $  {\color{red}{\{123\,\,145\,\,167\,\,246\,\,257\,\,348\}}} $ & $  {\color{blue}{\{\}}} $ & $ \varnothing^3\triangle^1 {\color{blue}{\triangle}} $ & $ 2 $ \\
$ 35 $ & $ 10080 $ & $  {\color{red}{\{123\,\,145\,\,167\,\,246\,\,257\,\,348\,\,568\}}} $ & $  {\color{blue}{\{\}}} $ & $ \varnothing^2\triangle^2 {\color{blue}{\triangle}} $ & $ 1 $ \\
$ 36 $ & $ 5040 $ & $  {\color{red}{\{123\,\,145\,\,167\,\,246\,\,257\,\,478\}}} $ & $  {\color{blue}{\{\}}} $ & $ \varnothing^4\triangle^1 $ & $ 1 $ \\
$ 37 $ & $ 20160 $ & $  {\color{red}{\{123\,\,145\,\,167\,\,246\,\,258\}}} $ & $  {\color{blue}{\{\}}} $ & $ \varnothing^3\triangle^1 $ & $ 3 $ \\
$ 38 $ & $ 20160 $ & $  {\color{red}{\{123\,\,145\,\,167\,\,246\,\,258\}}} $ & $  {\color{blue}{\{345678\}}} $ & $ \varnothing^3\triangle^1 {\color{blue}{\varnothing}} $ & $ 2 $ \\
$ 39 $ & $ 20160 $ & $  {\color{red}{\{123\,\,145\,\,167\,\,246\,\,258\,\,357\}}} $ & $  {\color{blue}{\{\}}} $ & $ \varnothing^2\triangle^2 $ & $ 2 $ \\
$ 40 $ & $ 6720 $ & $  {\color{red}{\{123\,\,145\,\,167\,\,246\,\,258\,\,357\,\,368\}}} $ & $  {\color{blue}{\{\}}} $ & $ \varnothing^1\triangle^3 $ & $ 1 $ \\
$ 41 $ & $ 840 $ & $  {\color{red}{\{123\,\,145\,\,167\,\,246\,\,258\,\,357\,\,368\,\,478\}}} $ & $  {\color{blue}{\{\}}} $ & $ \triangle^4 $ & $ 0 $ \\
$ 42 $ & $ 20160 $ & $  {\color{red}{\{123\,\,145\,\,167\,\,246\,\,258\,\,378\}}} $ & $  {\color{blue}{\{\}}} $ & $ \varnothing^1\triangle^2 {\color{blue}{\triangle}} $ & $ 3 $ \\
$ 43 $ & $ 3360 $ & $  {\color{red}{\{123\,\,145\,\,167\,\,246\,\,357\}}} $ & $  {\color{blue}{\{\}}} $ & $ \varnothing^3\triangle^1 $ & $ 3 $ \\
$ 44 $ & $ 20160 $ & $  {\color{red}{\{123\,\,145\,\,167\,\,246\,\,358\}}} $ & $  {\color{blue}{\{\}}} $ & $ \varnothing^2\triangle^1 {\color{blue}{\triangle}} $ & $ 4 $ \\
$ 45 $ & $ 10080 $ & $  {\color{red}{\{123\,\,145\,\,167\,\,248\}}} $ & $  {\color{blue}{\{\}}} $ & $ \varnothing^2\triangle^1 $ & $ 5 $ \\
$ 46 $ & $ 10080 $ & $  {\color{red}{\{123\,\,145\,\,167\,\,248\}}} $ & $  {\color{blue}{\{234567\}}} $ & $ \varnothing^3\triangle^1 $ & $ 3 $ \\
$ 47 $ & $ 20160 $ & $  {\color{red}{\{123\,\,145\,\,167\,\,248\}}} $ & $  {\color{blue}{\{235678\}}} $ & $ \varnothing^2\triangle^1 {\color{blue}{\varnothing}} $ & $ 4 $ \\
$ 48 $ & $ 5040 $ & $  {\color{red}{\{123\,\,145\,\,167\,\,248\,\,358\}}} $ & $  {\color{blue}{\{\}}} $ & $ \varnothing^2\triangle^1 {\color{blue}{\triangle}} $ & $ 4 $ \\
$ 49 $ & $ 5040 $ & $  {\color{red}{\{123\,\,145\,\,167\,\,248\,\,358\}}} $ & $  {\color{blue}{\{234567\}}} $ & $ \varnothing^3\triangle^1 {\color{blue}{\triangle}} $ & $ 2 $ \\
$ 50 $ & $ 20160 $ & $  {\color{red}{\{123\,\,145\,\,167\,\,248\,\,368\}}} $ & $  {\color{blue}{\{\}}} $ & $ \varnothing^1\triangle^2 $ & $ 4 $ \\
$ 51 $ & $ 20160 $ & $  {\color{red}{\{123\,\,145\,\,167\,\,248\,\,368\}}} $ & $  {\color{blue}{\{234567\}}} $ & $ \varnothing^2\triangle^2 $ & $ 2 $ \\
$ 52 $ & $ 3360 $ & $  {\color{red}{\{123\,\,145\,\,167\,\,248\,\,368\,\,578\}}} $ & $  {\color{blue}{\{\}}} $ & $ \triangle^3 $ & $ 3 $ \\
$ 53 $ & $ 3360 $ & $  {\color{red}{\{123\,\,145\,\,167\,\,248\,\,368\,\,578\}}} $ & $  {\color{blue}{\{234567\}}} $ & $ \varnothing^1\triangle^3 $ & $ 1 $ \\
$ 54 $ & $ 3360 $ & $  {\color{red}{\{123\,\,145\,\,246\}}} $ & $  {\color{blue}{\{\}}} $ & $ \varnothing^3 $ & $ 6 $ \\
$ 55 $ & $ 10080 $ & $  {\color{red}{\{123\,\,145\,\,246\}}} $ & $  {\color{blue}{\{125678\}}} $ & $ \varnothing^4 $ & $ 4 $ \\
$ 56 $ & $ 10080 $ & $  {\color{red}{\{123\,\,145\,\,246\}}} $ & $  {\color{blue}{\{135678\}}} $ & $ \varnothing^3 {\color{blue}{\varnothing}} $ & $ 5 $ \\
$ 57 $ & $ 10080 $ & $  {\color{red}{\{123\,\,145\,\,246\}}} $ & $  {\color{blue}{\{125678\,\,134678\}}} $ & $ \varnothing^5 $ & $ 2 $ \\
$ 58 $ & $ 10080 $ & $  {\color{red}{\{123\,\,145\,\,246\}}} $ & $  {\color{blue}{\{125678\,\,345678\}}} $ & $ \varnothing^4 {\color{blue}{\varnothing}} $ & $ 3 $ \\
$ 59 $ & $ 3360 $ & $  {\color{red}{\{123\,\,145\,\,246\}}} $ & $  {\color{blue}{\{125678\,\,134678\,\,234578\}}} $ & $ \varnothing^6 $ & $ 0 $ \\
$ 60 $ & $ 840 $ & $  {\color{red}{\{123\,\,145\,\,246\,\,356\}}} $ & $  {\color{blue}{\{\}}} $ & $ \varnothing^4 $ & $ 4 $ \\
$ 61 $ & $ 2520 $ & $  {\color{red}{\{123\,\,145\,\,246\,\,356\}}} $ & $  {\color{blue}{\{125678\}}} $ & $ \varnothing^5 $ & $ 2 $ \\
$ 62 $ & $ 2520 $ & $  {\color{red}{\{123\,\,145\,\,246\,\,356\}}} $ & $  {\color{blue}{\{125678\,\,134678\}}} $ & $ \varnothing^6 $ & $ 0 $ \\
 \sout{$63$}  & \sout{$ 840 $} & \sout{$  {\color{red}{\{123\,\,145\,\,246\,\,356\}}} $} & \sout{$  {\color{blue}{\{125678\,\,134678\,\,234578\}}} $} & NR & NR  \\ \hline
\end{tabular}
\caption{Data for each candidate quatroid stratum. NR indicates \emph{not representable} and NB indicates \emph{not B\'ezoutian}. }
\label{tab:bigtable1}
\end{table}
}}
{\scriptsize{
\begin{table}[!htpb]
\begin{tabular}{|r|c|rl|c|c|}
 \hline 
 Num  & $ | $ Orb $ | $ &  Lines  &  Conics  &  Reducibles  & $ d_Q $\\ \hline \hline 
$ 64 $ & $ 10080 $ & $  {\color{red}{\{123\,\,145\,\,246\,\,357\}}} $ & $  {\color{blue}{\{\}}} $ & $ \varnothing^2\triangle^1 $ & $ 5 $ \\
$ 65 $ & $ 20160 $ & $  {\color{red}{\{123\,\,145\,\,246\,\,357\}}} $ & $  {\color{blue}{\{125678\}}} $ & $ \varnothing^3\triangle^1 $ & $ 3 $ \\
$ 66 $ & $ 10080 $ & $  {\color{red}{\{123\,\,145\,\,246\,\,357\}}} $ & $  {\color{blue}{\{125678\,\,134678\}}} $ & $ \varnothing^4\triangle^1 $ & $ 1 $ \\
$ 67 $ & $ 10080 $ & $  {\color{red}{\{123\,\,145\,\,246\,\,357\,\,678\}}} $ & $  {\color{blue}{\{\}}} $ & $ \triangle^2 {\color{blue}{\triangle}} $ & $ 5 $ \\
$ 68 $ & $ 10080 $ & $  {\color{red}{\{123\,\,145\,\,246\,\,378\}}} $ & $  {\color{blue}{\{\}}} $ & $ \varnothing^1\triangle^1 {\color{blue}{\triangle}} $ & $ 6 $ \\
$ 69 $ & $ 10080 $ & $  {\color{red}{\{123\,\,145\,\,246\,\,378\}}} $ & $  {\color{blue}{\{125678\}}} $ & $ \varnothing^2\triangle^1 {\color{blue}{\triangle}} $ & $ 4 $ \\
$ 70 $ & $ 5040 $ & $  {\color{red}{\{123\,\,145\,\,267\}}} $ & $  {\color{blue}{\{\}}} $ & $ \varnothing^1\triangle^1 $ & $ 7 $ \\
$ 71 $ & $ 20160 $ & $  {\color{red}{\{123\,\,145\,\,267\}}} $ & $  {\color{blue}{\{134678\}}} $ & $ \varnothing^2\triangle^1 $ & $ 5 $ \\
$ 72 $ & $ 5040 $ & $  {\color{red}{\{123\,\,145\,\,267\}}} $ & $  {\color{blue}{\{345678\}}} $ & $ \varnothing^1\triangle^1 {\color{blue}{\varnothing}} $ & $ 6 $ \\
$ 73 $ & $ 20160 $ & $  {\color{red}{\{123\,\,145\,\,267\}}} $ & $  {\color{blue}{\{134678\,\,234568\}}} $ & $ \varnothing^3\triangle^1 $ & $ 3 $ \\
$ 74 $ & $ 5040 $ & $  {\color{red}{\{123\,\,145\,\,267\,\,468\}}} $ & $  {\color{blue}{\{\}}} $ & $ \triangle^2 $ & $ 6 $ \\
$ 75 $ & $ 10080 $ & $  {\color{red}{\{123\,\,145\,\,267\,\,468\}}} $ & $  {\color{blue}{\{135678\}}} $ & $ \varnothing^1\triangle^2 $ & $ 4 $ \\
$ 76 $ & $ 5040 $ & $  {\color{red}{\{123\,\,145\,\,267\,\,468\}}} $ & $  {\color{blue}{\{135678\,\,234578\}}} $ & $ \varnothing^2\triangle^2 $ & $ 2 $ \\
$ 77 $ & $ 840 $ & $  {\color{red}{\{123\,\,145\,\,678\}}} $ & $  {\color{blue}{\{\}}} $ & $ \triangle^1 {\color{blue}{\triangle}} $ & $ 8 $ \\
$ 78 $ & $ 2520 $ & $  {\color{red}{\{123\,\,145\,\,678\}}} $ & $  {\color{blue}{\{234567\}}} $ & $ \varnothing^1\triangle^1 {\color{blue}{\triangle}} $ & $ 6 $ \\
$ 79 $ & $ 280 $ & $  {\color{red}{\{123\,\,456\}}} $ & $  {\color{blue}{\{\}}} $ & $ \triangle^1 $ & $ 9 $ \\
$ 80 $ & $ 2520 $ & $  {\color{red}{\{123\,\,456\}}} $ & $  {\color{blue}{\{124578\}}} $ & $ \varnothing^1\triangle^1 $ & $ 7 $ \\
$ 81 $ & $ 5040 $ & $  {\color{red}{\{123\,\,456\}}} $ & $  {\color{blue}{\{124578\,\,134678\}}} $ & $ \varnothing^2\triangle^1 $ & $ 5 $ \\
$ 82 $ & $ 1680 $ & $  {\color{red}{\{123\,\,456\}}} $ & $  {\color{blue}{\{124578\,\,134678\,\,235678\}}} $ & $ \varnothing^3\triangle^1 $ & $ 3 $ \\
$ 83 $ & $ 8 $ & $  {\color{red}{\{1234567\}}} $ & $  {\color{blue}{\{\}}} $ & NB & $  {\color{red}{0}} $ \\
$ 84 $ & $ 168 $ & $  {\color{red}{\{123456\,\,178\}}} $ & $  {\color{blue}{\{\}}} $ & NB & $  {\color{red}{0}} $ \\
$ 85 $ & $ 28 $ & $  {\color{red}{\{123456\}}} $ & $  {\color{blue}{\{\}}} $ & NB & $  {\color{red}{0}} $ \\
$ 86 $ & $ 280 $ & $  {\color{red}{\{12345\,\,1678\}}} $ & $  {\color{blue}{\{\}}} $ & NB & $  {\color{red}{0}} $ \\
$ 87 $ & $ 3360 $ & $  {\color{red}{\{12345\,\,167\,\,268\}}} $ & $  {\color{blue}{\{\}}} $ & NB & $  {\color{red}{0}} $ \\
$ 88 $ & $ 3360 $ & $  {\color{red}{\{12345\,\,167\,\,268\,\,378\}}} $ & $  {\color{blue}{\{\}}} $ & NB & $  {\color{red}{0}} $ \\
$ 89 $ & $ 840 $ & $  {\color{red}{\{12345\,\,167\}}} $ & $  {\color{blue}{\{\}}} $ & NB & $  {\color{red}{0}} $ \\
$ 90 $ & $ 56 $ & $  {\color{red}{\{12345\}}} $ & $  {\color{blue}{\{\}}} $ & NB & $  {\color{red}{0}} $ \\
$ 91 $ & $ 56 $ & $  {\color{red}{\{12345\,\,678\}}} $ & $  {\color{blue}{\{\}}} $ & NB & $  {\color{red}{0}} $ \\
$ 92 $ & $ 3360 $ & $  {\color{red}{\{1234\,\,1567\,\,258\,\,368\,\,478\}}} $ & $  {\color{blue}{\{\}}} $ & NB & $  {\color{red}{0}} $ \\
$ 93 $ & $ 10080 $ & $  {\color{red}{\{1234\,\,1567\,\,258\,\,368\}}} $ & $  {\color{blue}{\{\}}} $ & NB & $  {\color{red}{0}} $ \\
$ 94 $ & $ 5040 $ & $  {\color{red}{\{1234\,\,1567\,\,258\}}} $ & $  {\color{blue}{\{\}}} $ & NB & $  {\color{red}{0}} $ \\
$ 95 $ & $ 560 $ & $  {\color{red}{\{1234\,\,1567\}}} $ & $  {\color{blue}{\{\}}} $ & NB & $  {\color{red}{0}} $ \\
$ 96 $ & $ 840 $ & $  {\color{red}{\{1234\,\,156\,\,178\}}} $ & $  {\color{blue}{\{\}}} $ & NB & $  {\color{red}{0}} $ \\
$ 97 $ & $ 2520 $ & $  {\color{red}{\{1234\,\,156\,\,178\}}} $ & $  {\color{blue}{\{235678\}}} $ & NB & $  {\color{red}{0}} $ \\
$ 98 $ & $ 10080 $ & $  {\color{red}{\{1234\,\,156\,\,178\,\,257\}}} $ & $  {\color{blue}{\{\}}} $ & NB & $  {\color{red}{0}} $ \\
$ 99 $ & $ 10080 $ & $  {\color{red}{\{1234\,\,156\,\,178\,\,257\}}} $ & $  {\color{blue}{\{345678\}}} $ & NB & $  {\color{red}{0}} $ \\
$ 100 $ & $ 2520 $ & $  {\color{red}{\{1234\,\,156\,\,178\,\,257\,\,268\}}} $ & $  {\color{blue}{\{\}}} $ & NB & $  {\color{red}{0}} $ \\
$ 101 $ & $ 2520 $ & $  {\color{red}{\{1234\,\,156\,\,178\,\,257\,\,268\}}} $ & $  {\color{blue}{\{345678\}}} $ & NB & $  {\color{red}{0}} $ \\
$ 102 $ & $ 10080 $ & $  {\color{red}{\{1234\,\,156\,\,178\,\,257\,\,268\,\,358\}}} $ & $  {\color{blue}{\{\}}} $ & NB & $  {\color{red}{0}} $ \\
$ 103 $ & $ 5040 $ & $  {\color{red}{\{1234\,\,156\,\,178\,\,257\,\,268\,\,358\,\,467\}}} $ & $  {\color{blue}{\{\}}} $ & NB & $  {\color{red}{0}} $ \\
$ 104 $ & $ 20160 $ & $  {\color{red}{\{1234\,\,156\,\,178\,\,257\,\,358\}}} $ & $  {\color{blue}{\{\}}} $ & NB & $  {\color{red}{0}} $ \\
$ 105 $ & $ 20160 $ & $  {\color{red}{\{1234\,\,156\,\,178\,\,257\,\,358\,\,467\}}} $ & $  {\color{blue}{\{\}}} $ & NB & $  {\color{red}{0}} $ \\
$ 106 $ & $ 10080 $ & $  {\color{red}{\{1234\,\,156\,\,178\,\,257\,\,368\}}} $ & $  {\color{blue}{\{\}}} $ & NB & $  {\color{red}{0}} $ \\
$ 107 $ & $ 1680 $ & $  {\color{red}{\{1234\,\,156\}}} $ & $  {\color{blue}{\{\}}} $ & NB & $  {\color{red}{0}} $ \\
$ 108 $ & $ 5040 $ & $  {\color{red}{\{1234\,\,156\}}} $ & $  {\color{blue}{\{235678\}}} $ & NB & $  {\color{red}{0}} $ \\
$ 109 $ & $ 10080 $ & $  {\color{red}{\{1234\,\,156\,\,257\}}} $ & $  {\color{blue}{\{\}}} $ & NB & $  {\color{red}{0}} $ \\
$ 110 $ & $ 10080 $ & $  {\color{red}{\{1234\,\,156\,\,257\}}} $ & $  {\color{blue}{\{345678\}}} $ & NB & $  {\color{red}{0}} $ \\
$ 111 $ & $ 6720 $ & $  {\color{red}{\{1234\,\,156\,\,257\,\,358\}}} $ & $  {\color{blue}{\{\}}} $ & NB & $  {\color{red}{0}} $ \\
$ 112 $ & $ 20160 $ & $  {\color{red}{\{1234\,\,156\,\,257\,\,358\,\,467\}}} $ & $  {\color{blue}{\{\}}} $ & NB & $  {\color{red}{0}} $ \\
$ 113 $ & $ 6720 $ & $  {\color{red}{\{1234\,\,156\,\,257\,\,358\,\,678\}}} $ & $  {\color{blue}{\{\}}} $ & NB & $  {\color{red}{0}} $ \\
$ 114 $ & $ 6720 $ & $  {\color{red}{\{1234\,\,156\,\,257\,\,367\}}} $ & $  {\color{blue}{\{\}}} $ & NB & $  {\color{red}{0}} $ \\
$ 115 $ & $ 20160 $ & $  {\color{red}{\{1234\,\,156\,\,257\,\,368\}}} $ & $  {\color{blue}{\{\}}} $ & NB & $  {\color{red}{0}} $ \\
$ 116 $ & $ 5040 $ & $  {\color{red}{\{1234\,\,156\,\,257\,\,368\,\,478\}}} $ & $  {\color{blue}{\{\}}} $ & NB & $  {\color{red}{0}} $ \\
$ 117 $ & $ 10080 $ & $  {\color{red}{\{1234\,\,156\,\,257\,\,678\}}} $ & $  {\color{blue}{\{\}}} $ & NB & $  {\color{red}{0}} $ \\
$ 118 $ & $ 2520 $ & $  {\color{red}{\{1234\,\,156\,\,278\}}} $ & $  {\color{blue}{\{\}}} $ & NB & $  {\color{red}{0}} $ \\
$ 119 $ & $ 2520 $ & $  {\color{red}{\{1234\,\,156\,\,278\}}} $ & $  {\color{blue}{\{345678\}}} $ & NB & $  {\color{red}{0}} $ \\
$ 120 $ & $ 3360 $ & $  {\color{red}{\{1234\,\,156\,\,578\}}} $ & $  {\color{blue}{\{\}}} $ & NB & $  {\color{red}{0}} $ \\
$ 121 $ & $ 70 $ & $  {\color{red}{\{1234\}}} $ & $  {\color{blue}{\{\}}} $ & NB & $  {\color{red}{0}} $ \\
$ 122 $ & $ 420 $ & $  {\color{red}{\{1234\}}} $ & $  {\color{blue}{\{125678\}}} $ & NB & $  {\color{red}{0}} $ \\
$ 123 $ & $ 210 $ & $  {\color{red}{\{1234\}}} $ & $  {\color{blue}{\{125678\,\,345678\}}} $ & NB & $  {\color{red}{0}} $ \\
$ 124 $ & $ 280 $ & $  {\color{red}{\{1234\,\,567\}}} $ & $  {\color{blue}{\{\}}} $ & NB & $  {\color{red}{0}} $ \\
$ 125 $ & $ 35 $ & $  {\color{red}{\{1234\,\,5678\}}} $ & $  {\color{blue}{\{\}}} $ & NB & $  {\color{red}{0}} $ \\
$ 126 $ & $ 1 $ & $  {\color{red}{\{12345678\}}} $ & $  {\color{blue}{\{\}}} $ & NB & $  {\color{red}{0}} $ \\ \hline
 \end{tabular}
\caption{Data for each candidate quatroid stratum. NR indicates \emph{not representable} and NB indicates \emph{not B\'ezoutian}. }
\label{tab:bigtable2}
\end{table}
 }}

\newpage 
\begin{figure}[!htpb]
\includegraphics[scale=0.34]{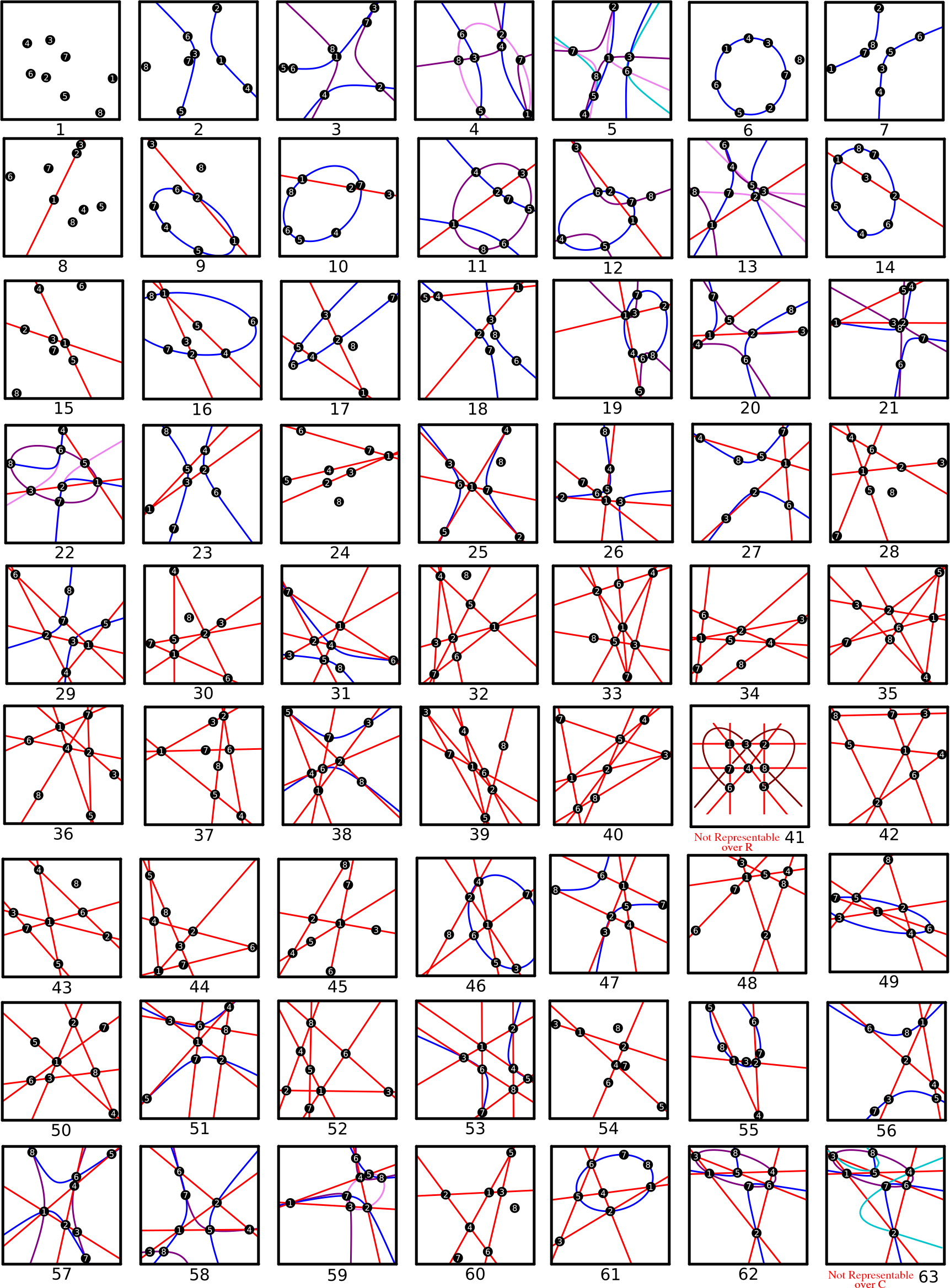}
\caption{Illustrations of candidate quatroids via real representatives if possible} 
\label{fig:Quatroids1}
\end{figure}
\begin{figure}[!htpb]
\includegraphics[scale=0.35]{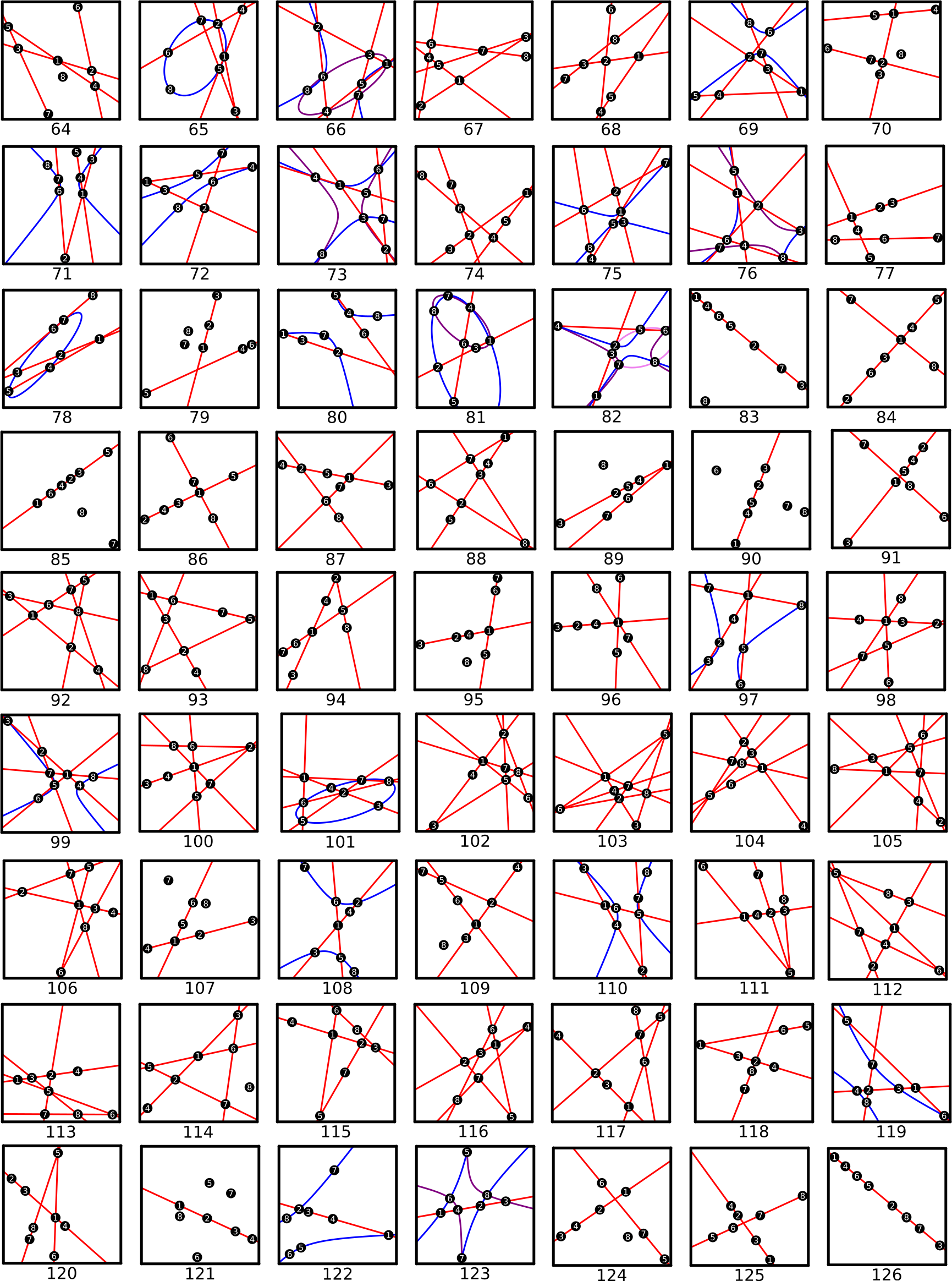}
\caption{Illustrations of candidate quatroids via real representatives if possible} 
\label{fig:Quatroids2}
\end{figure}

\newpage

{\small
\bibliographystyle{alphaurl}
\bibliography{curves.bib}
}
\end{document}